\documentclass{article}

\usepackage{lineno,hyperref}
\modulolinenumbers[5]

%     If your article includes graphics, uncomment this command.
\usepackage{amsmath,amssymb,amsthm, amscd, verbatim, setspace}
\usepackage{graphicx} 
\usepackage{url, hyperref}
\usepackage[section]{placeins}
\usepackage{tabu}
\tabulinesep=0.8mm
\usepackage{enumerate}
\usepackage{mathtools}
\usepackage[colorinlistoftodos,prependcaption,textsize=tiny]{todonotes}

\usepackage[utf8]{inputenc}
\usepackage[T1]{fontenc}

\makeatletter
\newtheorem*{rep@theorem}{\rep@title}
\newcommand{\newreptheorem}[2]{%
\newenvironment{rep#1}[1]{%
 \def\rep@title{#2~\ref{##1}}%
 \begin{rep@theorem}}%
 {\end{rep@theorem}}}
\makeatother

\theoremstyle{plain}
\newtheorem{theorem}{Theorem}[section]
\newtheorem{lemma}[theorem]{Lemma}

\newtheorem{thm}[theorem]{Theorem}
\newreptheorem{thm}{Theorem}

\newtheorem{lem}[theorem]{Lemma}
\newtheorem{prop}[theorem]{Proposition}
\newreptheorem{prop}{Proposition}
\newtheorem{cor}[theorem]{Corollary}
\newreptheorem{cor}{Corollary}
\newtheorem{conj}[theorem]{Conjecture}
\newreptheorem{conj}{Conjecture}

\theoremstyle{definition}
\newtheorem{definition}[theorem]{Definition}

\theoremstyle{definition}
\newtheorem{defn}[theorem]{Definition}

\theoremstyle{remark}

\newtheorem*{rem}{Remark}

\numberwithin{equation}{section}

%    Absolute value notation

%    Blank box placeholder for figures (to avoid requiring any
%    particular graphics capabilities for printing this document).

\newcommand{\CC}{\mathbb{C}}
\newcommand{\DD}{\mathbb{D}}

\newcommand{\FF}{\mathbb{F}}

\newcommand{\N}{\mathbb{N}}

\newcommand{\RR}{\mathbb{R}}
\newcommand{\R}{\mathbb{R}}
\newcommand{\T}{\mathbb{T}}
\newcommand{\TT}{\mathbb{T}}
\newcommand{\U}{\mathbb{U}}
\newcommand{\UU}{\mathbb{U}}
\newcommand{\ZZ}{\mathbb{Z}}
\newcommand{\Z}{\mathbb{Z}}

\DeclareMathOperator{\ex}{ex}
\DeclareMathOperator{\BB}{BB}
\newcommand{\Esymb}{{\bf E}}
\DeclareMathOperator*{\E}{\Esymb}
\DeclareMathOperator{\rank}{rank}
\newcommand{\cB}{\mathcal B}

% Exponential function
\newcommand{\expo}[1]{{\mathsf{e}\left(#1\right)}}

\usepackage{graphicx}
\usepackage{amssymb}
\usepackage{amsthm}

\begin{document}

\title{A Counting Lemma for Binary Matroids and Applications to Extremal Problems}

\author{Sammy Luo}
\date{November 2018}

\maketitle

\begin{abstract}
In graph theory, the Szemerédi regularity lemma gives a decomposition of the indicator function for any graph $G$ into a structured component, a uniform part, and a small error. This result, in conjunction with a counting lemma that guarantees many copies of a subgraph $H$ provided a copy of $H$ appears in the structured component, is used in many applications to extremal problems. An analogous decomposition theorem exists for functions over $\FF_p^n$. Specializing to $p=2$, we obtain a statement about the indicator functions of simple binary matroids. In this paper we extend previous results to prove a corresponding counting lemma for binary matroids. We then apply this counting lemma to give simple proofs of some known extremal results, analogous to the proofs of their graph-theoretic counterparts, and discuss how to use similar methods to attack a problem concerning the critical numbers of dense binary matroids avoiding a fixed submatroid.
\end{abstract}

\section{Introduction}
In this paper, the term \emph{matroid} refers to a simple binary matroid. A \emph{simple binary matroid} $M$ is, for our purposes, a subset of $\FF_2^r\setminus \{0\}$ having full rank in $\FF_2^r$. The positive integer $r=r(M)$ is called the \emph{rank} of $M$. The \emph{critical number} $\chi(M)$, first defined by Crapo and Rota~\cite{CrapoRota} under the name of \emph{critical exponent}, is the smallest $c$ such that there is a subspace of $\FF_2^r$ of codimension $c$ (i.e. a copy of $\FF_2^{r-c}$) contained in $\FF_2^r\setminus M$, or equivalently such that $M$ is contained in a union $A_1\cup\cdots\cup A_c$ where each $A_i$ is a hyperplane $\FF_2^r\setminus \FF_2^{r-1}$. For a classical reference for the basic theory of binary matroids, see \cite[Chapter~9]{Oxley}.

Basic examples of matroids include the following.

\begin{itemize}
    \item The \emph{projective geometry} of rank $r$, $PG(r-1,2)\coloneqq \FF_2^r\setminus \{0\}$, which has rank $r$ and critical number $r$.
    \item The \emph{affine geometry} of rank $r$, $AG(r-1,2)\coloneqq \FF_2^r\setminus \FF_2^{r-1}$, which has rank $r$ and critical number $1$.
    \item The \emph{Bose-Burton geometry}, $\BB(r,c)=\FF_2^r\setminus \FF_2^{r-c}$, a generalization of both examples above, which has rank $r$ and critical number $c$.
\end{itemize}

There is a direct connection between graphs and matroids: For any graph $G$ we can define its \emph{cycle matroid} $M(G)$, whose elements correspond to edges of $G$, where a set of elements of $M$ is linearly independent if and only if the corresponding edges of $G$ contain no cycle. A matroid is called \emph{graphic} if it is the cycle matroid of some graph. As observed in \cite{main}, if $\chi(G)$ is the chromatic number of $G$, we have $\chi(M(G))=\lceil \log_2(\chi(G))\rceil$.

The critical number of a matroid is analogous to the chromatic number of a graph. Just as the chromatic number plays a large role in many extremal problems in graph theory, the critical number plays a large role in extremal problems on matroids, which are often motivated by analogous problems for graphs. As there is a notion of a graph $G$ containing a copy of a subgraph $H$, there is a corresponding notion for matroids: a matroid $M$ \emph{contains} a copy of a matroid $N$ if there is a linear injection $\iota:\FF_2^{r(N)}\to \FF_2^{r(M)}$ such that $\iota(N)\subseteq M$. We often simply write this as $N\subseteq M$. Many extremal problems pose questions about criteria for the containment or avoidance of a fixed matroid $N$ in a matroid $M$.

One example of such an extremal problem is to determine the critical threshold of a matroid, another concept inspired by a graph-theoretical analogue.

\begin{defn}
Given a matroid $N$, the \emph{critical threshold} $\theta(N)$ is the infimum of all $\alpha>0$ for which there exists $c<\infty$ such that $|M|\geq \alpha 2^{r(M)}$ implies either $N\subseteq M$ or $\chi(M)\leq c$.
\end{defn}

The conjecture below is the extremal problem that motivates our work in this paper.
\begin{conj}[Geelen, Nelson, {\cite[Conj~1.7]{main}}]
\label{conj:mainsimp}
If $\chi(N)=c$, then $\theta(N)=1-i2^{-c}$, where $i\in \{2,3,4\}$.
\end{conj}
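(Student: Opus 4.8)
\medskip
\noindent\emph{Proof proposal.} The plan is to prove the two inequalities $\theta(N)\le 1-i2^{-c}$ and $\theta(N)\ge 1-i2^{-c}$ separately, mirroring the interplay between the Erd\H{o}s--Stone--Simonovits/stability theory and the structure theorems behind graph chromatic thresholds, with the arithmetic regularity lemma over $\FF_2^n$ and the counting lemma of this paper replacing Szemer\'edi regularity and the graph counting lemma. For the upper bound, given a matroid $M$ with $|M|\ge(1-i2^{-c}+\varepsilon)2^{r(M)}$, I would first apply the $\FF_2^n$ regularity lemma to write $1_M=f_{\mathrm{str}}+f_{\mathrm{unif}}+f_{\mathrm{err}}$, where $f_{\mathrm{str}}$ is the conditional expectation onto a subspace of bounded codimension $n_0$ (equivalently, a weighted ``reduced matroid'' $R$ on $\FF_2^{n_0}$ of essentially the same density), $f_{\mathrm{unif}}$ has negligible $U^{r(N)}$ Gowers norm, and $\|f_{\mathrm{err}}\|_1$ is tiny. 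The counting lemma then reduces matters to $R$: if $R$ carries a copy of $N$ in the appropriate weighted sense, then $N\subseteq M$ and we are done.

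The heart of the upper bound is therefore a structural dichotomy for the bounded-rank matroid $R$ of density $>1-i2^{-c}+\varepsilon/2$: either $R$ contains a copy of $N$, or $R$ is forced to be a blow-up of one of a finite list of $N$-free ``template'' matroids, each of critical number at most some $c_0=c_0(N,\varepsilon)$, whence $\chi(M)\le c_0+o(1)$. The first target is the universal case $i=2$: prove that an $N$-free matroid of density exceeding $1-2^{-(c-1)}$ (the density of $\BB(r,c-1)$) is a blow-up of a bounded $N$-free template, yielding $\theta(N)\le 1-2\cdot2^{-c}$ for every $N$ with $\chi(N)=c$; then, under the finer hypotheses on $N$ that should separate the cases, push the density threshold down to $1-3\cdot2^{-c}$ or $1-4\cdot2^{-c}$. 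Here I would transport the neighbourhood/VC-dimension machinery of \L{}uczak--Thomass\'e and Allen--B\"ottcher--Griffiths--Kohayakawa--Morris, with the ``link'' matroids $M/x$ and hyperplane slices of $M$ playing the role of vertex neighbourhoods. A genuinely nontrivial point is that the regularity error $f_{\mathrm{err}}$ must be absorbed using the density hypothesis itself: since a sparse set can on its own have nearly full critical number, the covering of $M$ by few hyperplanes has to be extracted by an iterative cleanup that exploits density at every scale rather than merely globally (this is where the robustness of the critical number under blow-ups, as opposed to the chromatic number under vertex addition, will be used).

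For the lower bound I would construct, for each value $1-i2^{-c}$ that is supposed to occur for $N$ and each $c_0$, a family of $N$-free matroids of rank tending to infinity, density tending to $1-i2^{-c}$, and critical number exceeding $c_0$. The template generalises the Bose--Burton geometry: split $\FF_2^{r}=W\oplus U$ with $\dim U$ growing much faster than $\dim W$, place on $U$ the densest $N$-free blow-up available (for $i=2$, $\BB(\dim U,c-1)$, the blow-up of $PG(c-2,2)$) to supply the density, and graft, aligned with the small subspace $W$, a recursively constructed $N$-free matroid whose critical number grows with $\dim W$ while contributing only a $o(1)$ fraction of the points; because blow-up structures have uniformly bounded critical number on every subspace, the grafted piece is what makes $\chi(M)$ large, and one must check that the combined matroid still avoids $N$. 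The main obstacle---and the reason the conjecture is open---is pinning down the exact invariant of $N$ beyond $\chi(N)$ that determines whether $i$ equals $2$, $3$ or $4$: this amounts to classifying the maximal $N$-free templates that admit blow-ups and graftings of unbounded critical number, the matroid analogue of the decomposition family in the graph chromatic-threshold theorem. I expect this classification, together with the matching structural stability statement it would force on the regularity side, to be by far the hardest part.
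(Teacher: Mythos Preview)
The statement you are addressing is a \emph{conjecture}; the paper does not prove it and explicitly presents it as open. What the paper actually contains is: (i) the lower bound $\theta(N)\ge 1-i2^{-c}$, quoted from Geelen--Nelson (Theorem~\ref{thm:lower}) rather than proved here; (ii) the trivial upper bound $\theta(N)\le 1-2^{1-c}$ as a consequence of the geometric Erd\H{o}s--Stone theorem; (iii) a verification of the conjecture for the single family $N_{\ell,2,1}$ (Proposition~\ref{prop:nm1}); and (iv) a \emph{conditional} verification for $N_{\ell,c,1}$ assuming a further conjecture on regularity of shifted polynomial factors (Theorem~\ref{thm:conditional}). So there is no ``paper's own proof'' against which to compare, and your proposal should be read as a research programme rather than a proof.

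As a programme it is broadly aligned with the paper's philosophy, but two points deserve correction. First, your description of the structured part as ``the conditional expectation onto a subspace of bounded codimension'' is the degree-$1$ (Green) picture, and the paper is at pains to explain that this is \emph{not} strong enough for general $N$: one must work with nonclassical polynomial factors $\cB$ (Section~\ref{sec:decomps}), and the reduced object $R_{\varepsilon,\zeta}$ lives in $\FF_2^n$ itself, not in a bounded-rank quotient. This is not cosmetic: the paper's conditional argument for $N_{\ell,c,1}$ hinges precisely on the difficulty of controlling how the higher-degree factor interacts with shifts $x\mapsto x+h$, which is exactly why Conjecture~\ref{conj:polydifreg} has to be assumed. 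Second, the lower bound half of your plan is already done in the literature (Theorem~\ref{thm:lower}); the open content is entirely in the upper bound, and specifically in pushing below $1-2^{1-c}$ in the $i=3$ and $i=4$ cases. Your ``structural dichotomy / VC-dimension'' route is plausible in spirit but is not what the paper attempts; the paper instead works case-by-case with the counting lemma and ad hoc density arguments on the reduced matroid, and even then only reaches conditional results. You correctly identify the classification of extremal $N$-free templates as the crux, and that remains genuinely open.
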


A more precise and technical version of this conjecture is stated as Conjecture~\ref{conj:main}. The technical details, and previous work towards solving the conjecture, are discussed in Section~\ref{sec:matrBack}.

The graph-theoretic analog of Conjecture~\ref{conj:mainsimp} is Theorem~\ref{thm:chromThresh}, which was proven in \cite{chromThresh}. The proof there makes use of the Szemerédi regularity lemma and a corresponding counting lemma. Roughly speaking, the regularity lemma states that, for any desired degree of uniformity $\varepsilon$, the vertices of a sufficiently large graph $G$ can be partitioned into a bounded number of parts of approximately the same size such that most (all but $\varepsilon$-fraction) pairs of parts $(X,Y)$ are $\varepsilon$-uniform, meaning that the edge density $d(X',Y')$ between large enough subsets $X',Y'$ of $X,Y$ does not differ too much from the edge density $d(X,Y)$ between $X$ and $Y$. Given such a partition $\Pi$ of $G$, we can construct a ``reduced graph'' $R=R_{\varepsilon,\delta}(\Pi)$ whose vertices are the parts in $\Pi$, with an edge between a pair $(X,Y)$ if and only if $(X,Y)$ is $\varepsilon$-uniform and $d(X,Y)\geq \delta$. The counting lemma states that for any graph $H$ contained as a subgraph in $R$, many copies of it are contained in $G$.

It is natural to consider approaching the critical threshold problem using analogous methods. Various regularity results analogous to the Szemerédi regularity lemma have been shown in the matroid setting, usually framed in terms of the indicator function for a matroid $M$ decomposing into several parts. The main such result we use, Theorem~\ref{thm:decomp}, is stated in Section~\ref{sec:decomps}. The statement of this theorem involves some technical terminology relating to nonclassical polynomial factors and Gowers norms, for which a brief introduction is given in Section~\ref{sec:decomps}.

Our main result in this paper is a corresponding Counting Lemma for matroids, Theorem~\ref{thm:counting}, which we develop by building on work in \cite{VeryCountingMaybe} and \cite{hatamiRegCount}. The case of the Counting Lemma for affine matroids (i.e. matroids with critical number $1$) is proved in \cite{VeryCountingMaybe}, and we adapt much of the same argument for our setting of more general matroids. Again, stating this Counting Lemma in a precise form requires building up technical definitions for concepts like the reduced matroid, based on the results of applying Theorem~\ref{thm:decomp}. The Counting Lemma and its proof can be found in Section~\ref{sec:count}.

In Section~\ref{sec:apply} we demonstrate a few simple applications of this Counting Lemma, giving short new proofs for the matroid analogues of the Removal Lemma and the Erd\H{o}s-Stone Theorem in graph theory. Finally, we discuss an approach to applying our Counting Lemma and related techniques to Conjecture~\ref{conj:mainsimp} in Section~\ref{sec:work}
. Along the way, we prove the following technical result, which has the Bose-Burton theorem stated in Section~\ref{sec:matrBack} as an immediate corollary and may be useful in other settings as well.

\begin{prop}
\label{prop:extBB}
Let $n,c$ be positive integers, let $k_1,\dots,k_n$ be nonnegative integers, and let $G = \bigoplus_{i=1}^n \frac{1}{2^{k_i+1}}\ZZ/\ZZ$. Let $H$ be a subgroup of $G$. Let $M_1,\dots,$ $M_{2^c-1}$ be subsets of $G$. Then there exist $H_1,\dots,H_c\in G/H$, cosets of $H$, such that for $1\leq i\leq c$,
\begin{equation}
    \frac{1}{|H|}\sum_{x\in\{0,1\}^{i-1}} \left | M_{2^{i-1}+\sum_{j=1}^{i-1} x_j 2^{j-1}}\cap \left( H_i + \sum_{j=1}^{i-1} x_j H_j \right ) \right | \geq \sum_{j=2^{i-1}}^{2^i-1}\frac{|M_j|}{|G|}.\tag{$*$}
\end{equation}
\end{prop}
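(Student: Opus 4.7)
The plan is to build the cosets $H_1,\ldots,H_c$ greedily, one index at a time, using a first-moment argument at each step. The crucial structural observation is that the inequality ($*$) for index $i$ involves only $H_1,\ldots,H_i$ and not $H_{i+1},\ldots,H_c$: the subscript $2^{i-1}+\sum_{j=1}^{i-1}x_j 2^{j-1}$ on $M$ and the coset $H_i+\sum_{j=1}^{i-1}x_jH_j$ both depend only on $H_1,\ldots,H_i$. So I will pick $H_1$, then $H_2$, and so on, at each step imposing the constraint ($*$) for the current $i$ on the single fresh variable $H_i$.

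For the greedy step: given $H_1,\ldots,H_{i-1}$ satisfying ($*$) for indices $1,\ldots,i-1$, the left-hand side of ($*$) at index $i$ becomes a function of $H_i \in G/H$ alone, and I plan to compute its expectation when $H_i$ is drawn uniformly from $G/H$. For each fixed $x\in[0,1]^{i-1}$, the map $H_i \mapsto H_i + \sum_{j=1}^{i-1} x_j H_j$ is a bijection of the abelian group $G/H$, so as $H_i$ ranges uniformly over $G/H$, so does the translated coset. Since the cosets of $H$ partition $G$, this yields
\[
\E_{H_i \in G/H}\, \frac{\bigl|M_m \cap (H_i + \sum_{j=1}^{i-1} x_j H_j)\bigr|}{|H|} = \frac{1}{|G/H|}\sum_{q\in G/H}\frac{|M_m\cap q|}{|H|} = \frac{|M_m|}{|G|},
\]
where $m = 2^{i-1}+\sum_{j=1}^{i-1}x_j 2^{j-1}$. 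Summing over $x \in [0,1]^{i-1}$, the average of the left-hand side of ($*$) over the choice of $H_i$ equals exactly its right-hand side $\sum_{j=2^{i-1}}^{2^i-1}|M_j|/|G|$, and pigeonhole then delivers some $H_i$ for which ($*$) holds. Iterating for $i=1,2,\ldots,c$ produces the desired cosets.

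I do not anticipate a serious obstacle here: every step reduces to a one-line averaging computation followed by pigeonhole, and the specific structure $G=\bigoplus_{i=1}^n \frac{1}{2^{k_i+1}}\ZZ/\ZZ$ plays no role --- the same argument goes through for an arbitrary finite abelian group $G$ with any subgroup $H$. The only point worth verifying carefully is that the constraint at step $i$ really does leave $H_i$ as a fresh variable given $H_1,\ldots,H_{i-1}$, which is immediate from inspection of the formula. Conceptually, the proposition reflects the fact that indexing $M_1,\ldots,M_{2^c-1}$ in binary matches each new batch $M_{2^{i-1}},\ldots,M_{2^i-1}$ of $2^{i-1}$ sets with exactly one fresh coset $H_i$ to absorb them.
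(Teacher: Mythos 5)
Your proposal is correct and matches the paper's proof essentially verbatim: both build $H_1,\dots,H_c$ greedily, observing that the constraint at index $i$ involves only $H_1,\dots,H_i$, and both use the fact that averaging $|M_m\cap(H_i+\sum x_jH_j)|/|H|$ over a uniformly random $H_i\in G/H$ gives $|M_m|/|G|$, followed by pigeonhole. The observation that the result holds for an arbitrary finite abelian group is a nice additional remark, but the argument is otherwise the same.
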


\section{Extremal Problems on Graphs and Matroids}

We start by looking at a basic extremal problem on graphs, that of avoiding a fixed subgraph $H$.

\begin{defn}
The \emph{extremal number} for a graph $H$ and integer $n$ is defined by
$$\ex(H,n)=\max\{|E(G)| \mid |G|=n, H\not\subseteq G\}.$$
\end{defn}

The following theorem is a classical result.

\begin{thm}[Erd\H{o}s-Stone]
\label{thm:ES}
\[\lim_{n\rightarrow \infty} \frac{\text{ex}(H;n)}{|E(K_n)|}=1-\frac{1}{\chi(H)-1}.\]
\end{thm}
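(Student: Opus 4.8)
The plan is to establish matching lower and upper bounds for $\ex(H;n)$, each asymptotic to $\bigl(1-\tfrac{1}{\chi(H)-1}\bigr)\binom{n}{2}=\bigl(1-\tfrac{1}{\chi(H)-1}\bigr)|K_n|$. We may assume $r\coloneqq\chi(H)\geq 2$, as otherwise $H$ is edgeless and the statement is vacuous; write $h\coloneqq|V(H)|$. For the lower bound I would exhibit a dense $H$-free graph, namely the Turán graph $T_{r-1}(n)$, the complete $(r-1)$-partite graph on $n$ vertices with parts of size $\lfloor n/(r-1)\rfloor$ or $\lceil n/(r-1)\rceil$. It is $(r-1)$-colorable, so every subgraph of it has chromatic number at most $r-1<r=\chi(H)$, whence $H\not\subseteq T_{r-1}(n)$. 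Counting its edges gives $|E(T_{r-1}(n))|=\bigl(1-\tfrac{1}{r-1}\bigr)\binom{n}{2}-O(n)$, so $\ex(H;n)\geq\bigl(1-\tfrac{1}{r-1}-o(1)\bigr)|K_n|$.

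For the upper bound, fix $\varepsilon>0$ and let $G$ be a graph on $n$ vertices with $|E(G)|\geq\bigl(1-\tfrac{1}{r-1}+\varepsilon\bigr)\binom{n}{2}$; I claim $H\subseteq G$ once $n$ is large. Set $\delta\coloneqq\varepsilon/4$, apply the Szemerédi regularity lemma with a uniformity parameter $\varepsilon'=\varepsilon'(\varepsilon,H)$ small enough and a lower bound on the number of parts large enough for the counting lemma below to apply, obtaining an equitable partition $\Pi$ into $k$ parts, and form the reduced graph $R=R_{\varepsilon',\delta}(\Pi)$. A routine count bounds the number of edges of $G$ lying inside a part, across an $\varepsilon'$-irregular pair, or across a regular pair of density below $\delta$ by $\bigl(\tfrac1k+\varepsilon'+\delta+o(1)\bigr)\binom{n}{2}$; removing these and comparing with $|E(G)|$ shows the edge density of $R$ is at least $1-\tfrac{1}{r-1}+\varepsilon-\tfrac1k-\varepsilon'-\delta-o(1)$, which exceeds the Turán threshold $1-\tfrac{1}{r-1}$ once $k$ is large, $\varepsilon'<\varepsilon/4$, and $n$ is large. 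By Turán's theorem, $R\supseteq K_r$.

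It remains to convert $K_r\subseteq R$ into $H\subseteq G$. Refine $\Pi$ by splitting each of the $r$ parts carrying this $K_r$ into $h$ equal sub-parts; for $\varepsilon'$ small relative to $1/h$, every pair of sub-parts inherits $O(\varepsilon')$-uniformity with density still at least $\delta/2$, so the refined reduced graph contains $K_r(h)$, the complete $r$-partite graph with parts of size $h$, as a subgraph. Applying the counting lemma to this copy of $K_r(h)$ yields $\Omega(n^{rh})$ copies of $K_r(h)$ in $G$; since $\chi(H)=r$ forces $H\subseteq K_r(h)$, in particular $H\subseteq G$. Hence $\ex(H;n)<\bigl(1-\tfrac{1}{r-1}+\varepsilon\bigr)|K_n|$ for all large $n$, and letting $\varepsilon\to0$ and combining with the lower bound proves the theorem.

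I expect the main obstacle to be the bookkeeping in the upper bound: fixing the parameters in the correct dependency order ($\varepsilon$, then $\delta$, then the uniformity $\varepsilon'$ and number-of-parts bound demanded by the counting lemma, then $n_0$), verifying that the crude edge-count really does push the density of $R$ past the Turán threshold, and checking that $\varepsilon'$-uniformity survives the refinement into $h$ sub-parts well enough for the counting lemma to apply. The conceptual heart — that a $K_r$ in the reduced graph forces a blow-up $K_r(h)$, and hence the fixed graph $H$, inside $G$ — is precisely the role of the counting lemma, and it is this step that the paper's matroid analogue (Theorem~\ref{thm:counting}) is designed to mirror.
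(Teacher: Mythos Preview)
The paper does not prove Theorem~\ref{thm:ES}; it is quoted as a classical result to motivate the matroid analogue (Theorem~\ref{thm:gES}), so there is no ``paper's own proof'' to compare against. Your argument is the standard regularity-plus-counting proof and is correct in outline.

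One minor simplification: the refinement step, where you split each of the $r$ parts into $h$ sub-parts to find $K_r(h)$ in a refined reduced graph, is unnecessary. Once you have $K_r\subseteq R$, you have $r$ parts that are pairwise $\varepsilon'$-regular with density at least $\delta$, and the counting lemma applied directly to the homomorphism $K_r(h)\to K_r$ (collapsing each colour class of $K_r(h)$ to a vertex) already yields $\Omega(n^{rh})$ copies of $K_r(h)$ in $G$. This is exactly how the paper's matroid proof of Theorem~\ref{thm:gES} proceeds: it finds $PG(c-1,2)$ in the reduced matroid via Bose--Burton, then invokes the Counting Lemma on the homomorphism from $BB(\ell,c)=2^{\ell-c}PG(c-1,2)$ to $PG(c-1,2)$, with no intermediate refinement. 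Your version works too, but tracking uniformity through the refinement adds bookkeeping you can avoid.
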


The special case where $H=K_m$ is a form of \emph{Turán's theorem}.

We can analyze the situation more carefully by looking for density thresholds above which, though graphs $G$ avoiding $H$ may exist, they are constrained by properties like a bounded chromatic number. It turns out that for graphs, the appropriate notion of density to consider here is the minimum degree $\delta(G)$ of a graph $G$.

\begin{defn}
\label{def:chromthresh}
Given a graph $H$, the \emph{chromatic threshold} $\theta(H)$ is the infimum of all $\alpha>0$ for which there exists $c<\infty$ such that $\delta(G)\geq \alpha |G|$ implies either $H\subseteq G$ or $\chi(G)\leq c$.
\end{defn}

The definition of the critical threshold of a matroid was motivated in analogy to Definition~\ref{def:chromthresh}.

The chromatic threshold was first determined for complete graphs in \cite{KnFree}, with an explicit sharp bound on the chromatic number involved.

\begin{thm}[Goddard, Lyle, {\cite[Thm~11]{KnFree}}]
If $\delta(G)>(2r-5)n/(2r-3)$ and $K_r\not\subseteq G$, then $\chi(G)\leq r+1$. In particular, $\theta(G)\leq \frac{2r-5}{2r-3}$.
\end{thm}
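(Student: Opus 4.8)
The plan is to prove, by induction on $r$, the implication that a $K_r$-free graph $G$ on $n$ vertices with $\delta(G)>\tfrac{2r-5}{2r-3}n$ satisfies $\chi(G)\leq r+1$; the bound $\theta(K_r)\leq\tfrac{2r-5}{2r-3}$ then follows immediately from the definition of the chromatic threshold. The base case $r=3$ asserts that a triangle-free graph $G$ with $\delta(G)>n/3$ has $\chi(G)\leq 4$, which is the sharp theorem of Brandt and Thomass\'e on dense triangle-free graphs; I would invoke it as a black box. Everything for $r\geq 4$ is a reduction to this case.

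For the inductive step, fix $r\geq 4$, let $G$ be $K_r$-free with $\delta:=\delta(G)>\tfrac{2r-5}{2r-3}n$, and consider any vertex $v$. The induced subgraph $G[N(v)]$ is $K_{r-1}$-free, since a $K_{r-1}$ inside $N(v)$ together with $v$ would be a $K_r$. Each $u\in N(v)$ has at most $n-|N(v)|$ neighbors outside $N(v)$, and $|N(v)|=\deg_G(v)\geq\delta$, so
\[
\frac{\delta(G[N(v)])}{|N(v)|}\;\geq\;1-\frac{n-\delta}{|N(v)|}\;\geq\;2-\frac{n}{\delta}\;>\;2-\frac{2r-3}{2r-5}\;=\;\frac{2(r-1)-5}{2(r-1)-3}.
\]
Thus $G[N(v)]$ satisfies the inductive hypothesis for $r-1$, giving $\chi(G[N(v)])\leq r$. (Equivalently, unrolling the same bookkeeping along an $(r-3)$-clique $K$ — which exists because the minimum-degree hypothesis forces $K_{r-3}\subseteq G$ by the Tur\'an-type bound — one finds that $G[N(K)]$ is triangle-free with $\delta(G[N(K)])>\tfrac13|N(K)|$, so $\chi(G[N(K)])\leq 4$ straight from the base case.) Now $v$ is adjacent only to vertices of $N(v)$, so a proper $r$-coloring of $G[N(v)]$ extends to all of $G$ using one additional color — put color $r+1$ on every vertex of $V(G)\setminus N(v)$ — precisely when $V(G)\setminus N[v]$ is an independent set.

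Hence the whole statement reduces to the claim that \emph{every $K_r$-free graph $G$ with $\delta(G)>\tfrac{2r-5}{2r-3}|G|$ has a vertex whose non-neighborhood is independent}, and this is the main obstacle. It is exactly here that the sharp constant and the strictness of the inequality are essential: the extremal configurations at this density are the joins $I_1\vee\cdots\vee I_{r-3}\vee M$ of $r-3$ independent sets with a triangle-free graph $M$ of minimum degree just above $|M|/3$, and in such a graph any vertex of some $I_j$ has non-neighborhood $I_j\setminus\{v\}$, which is independent, while any further irregularity would drop $\delta(G)$ below the threshold. I would establish the claim by a stability argument: reduce, by repeatedly deleting a vertex $x$ whose neighborhood is contained in that of another vertex (an operation that leaves $\chi$ unchanged and, once $n$ is large, preserves the minimum-degree hypothesis), to a graph with no such dominated vertex, and then show that such a graph lies within a controlled distance of the join construction above, which forces the existence of a vertex with independent non-neighborhood. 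The remaining pieces — the arithmetic that the minimum-degree ratio propagates to $G[N(v)]$ and to $G[N(K)]$, the existence of the $(r-3)$-clique, and the check that the final palette uses exactly $r+1$ colors — are routine; the stability step is where the real work lies.
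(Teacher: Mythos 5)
The paper does not prove this statement at all --- it is quoted from Goddard--Lyle as a known black box --- so your proposal has to stand on its own, and it does not. The outer scaffolding is fine: the arithmetic showing that $G[N(v)]$ is $K_{r-1}$-free with minimum-degree ratio exceeding $\frac{2(r-1)-5}{2(r-1)-3}$ is correct, and invoking Brandt--Thomass\'e for $r=3$ is legitimate. But you then reduce the entire theorem to the claim that every $K_r$-free graph $G$ with $\delta(G)>\frac{2r-5}{2r-3}|G|$ has a vertex whose non-neighborhood is independent, and you offer only a sketched ``stability argument'' for it. That claim is not merely unproven --- it is false. Take $r=4$, let $T=C_5[t]$ be the blow-up of the pentagon ($5t$ vertices, triangle-free, every vertex adjacent to $2t$ others), let $I$ be an independent set of $3t$ new vertices, join $I$ to $T$ completely and then delete a perfect matching between $I$ and some $3t$ vertices of $T$. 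The graph is $K_4$-free (at most one clique vertex can lie in $I$, and $T$ is triangle-free), has $n=8t$ vertices, and every degree is at least $5t-1>\frac{3}{5}n=4.8t$ for large $t$. Yet no vertex has an independent non-neighborhood: a vertex $v\in T$, say in class $V_i$, has the complete bipartite pair $V_{i+2}\cup V_{i+3}$ among its non-neighbors; a vertex $v\in I$ has exactly one non-neighbor $w\in T$, and $w$ is adjacent to every other vertex of $I$, so the edge $uw$ (for any $u\in I\setminus\{v\}$) lies inside $v$'s non-neighborhood. So the lemma your induction hinges on cannot be true as stated, and the extension step (color $N(v)$ with $r$ colors, dump one new color on the rest) is not available.

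The underlying issue is that a single-vertex, nested-neighborhood reduction is too local for this theorem: above the threshold the graph need only be close to a join $I_1\vee\cdots\vee I_{r-3}\vee M$, and ``close'' allows exactly the kind of sparse perturbation above, which destroys independent non-neighborhoods while preserving the degree condition and bounded chromatic number. Goddard and Lyle's argument works with a global vertex partition (roughly, $r-3$ independent sets plus a dense triangle-free remainder to which a Brandt--Thomass\'e-type bound is applied), not with the non-neighborhood of one vertex. If you want to salvage your outline, the statement to aim for is a partition-type structural lemma of that kind; the per-vertex claim you isolated as ``the main obstacle'' is a dead end.
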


The chromatic threshold of a general graph $H$ was determined in the general case by Allen et al. in \cite{chromThresh}. To state the result, we first need to make the following definitions.

\begin{defn}
The \emph{decomposition family} $\mathcal{M}(H)$ of an $r$-partite graph $H$ is the set of bipartite graphs obtained by deleting all but $2$ color classes in some $r$-coloring of $H$.
\end{defn}

\begin{defn}
A graph $H$ is \emph{r-near-acyclic} if $\chi(H)=r$ and deleting all but $3$ color classes in some $r$-coloring of $H$ yields a graph $H'$ that can be partitioned into a forest $F$ and an independent set $S$ such that every odd cycle in $H'$ meets $S$ in at least $2$ vertices.
\end{defn}

Now we can state the main result of \cite{chromThresh}.

\begin{thm}[Allen, et al., {\cite[Thm~2]{chromThresh}}]
\label{thm:chromThresh}
If $\chi(H)=r$, then $\theta(H)=1-\frac{1}{r-\frac{i}{2}}$, where $i=2$ if and only if $\mathcal{M}(H)$ contains no forest, $i=4$ if and only if $H$ is $r$-near-acyclic, and $i=3$ otherwise.
\end{thm}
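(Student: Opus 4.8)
The plan is to establish the two inequalities $\theta(H)\ge 1-\tfrac{1}{r-i/2}$ and $\theta(H)\le 1-\tfrac{1}{r-i/2}$ separately, where throughout $r=\chi(H)$ and $i\in\{2,3,4\}$ is determined by the stated dichotomy: $i=2$ unless $\mathcal{M}(H)$ contains a forest, and $i=4$ exactly when $H$ is $r$-near-acyclic (so $i=3$ is the remaining case). The lower bound is proved by explicit constructions, the upper bound by the regularity method together with a counting lemma and a stability argument.

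For the lower bound I would exhibit, for each of the three cases and every $\eta>0$, a sequence of $H$-free graphs $G_m$ with $\delta(G_m)\ge\bigl(1-\tfrac{1}{r-i/2}-\eta\bigr)|G_m|$ and $\chi(G_m)\to\infty$. Each $G_m$ is built on a complete multipartite skeleton: take $r-2$ large parts joined completely to one another and to a final block $B_m$, and place on $B_m$ a locally sparse graph of unbounded chromatic number, chosen so that in any copy of $H$ the vertices landing in $B_m$ induce $H$ minus at most $r-2$ independent sets, which the sparse graph cannot accommodate. When $\mathcal{M}(H)$ contains no forest, $H$ minus $r-2$ independent sets always contains a cycle of length at most $|V(H)|$, so taking $B_m$ to be a single part carrying a graph of girth exceeding $|V(H)|$ and chromatic number tending to infinity (an Erd\H{o}s high-girth graph) works, and a direct degree count in the skeleton gives density $1-\tfrac{1}{r-1}$. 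When $\mathcal{M}(H)$ contains a forest but $H$ is not $r$-near-acyclic, a high-girth graph no longer suffices since forests embed into every graph of large minimum degree; instead one splits $B_m$ into ``one and a half'' parts, so that triangles (but not all forests) are barred from $B_m$, and the failure of the $r$-near-acyclic condition is exactly what prevents $H$ from embedding, yielding density $1-\tfrac{1}{r-3/2}$. When $H$ is $r$-near-acyclic one uses a bipartite-like graph of large odd girth and large chromatic number (a generalized Mycielski or Borsuk graph) spread over two parts, giving $1-\tfrac{1}{r-2}$.

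For the upper bound, fix $\varepsilon>0$, suppose $\delta(G)\ge\bigl(1-\tfrac{1}{r-i/2}+\varepsilon\bigr)n$ and $H\not\subseteq G$. Apply the Szemer\'edi regularity lemma and pass to a reduced graph $R=R_{\varepsilon',d_0}(\Pi)$, which inherits $\delta(R)\ge\bigl(1-\tfrac{1}{r-i/2}+\tfrac{\varepsilon}{2}\bigr)|R|$. By the counting lemma, any copy of $H$ in a blow-up of $R$ lifts to a copy of $H$ in $G$, so $R$ must be free of $H$-blow-ups. Large minimum degree forces $R$ to contain $K_{r-1}$, hence many copies of $K_{r-2}$, each extendable by a dense bipartite ``link''; that link cannot contain a member of $\mathcal{M}(H)$, lest $H$ embed in the blow-up. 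If $\mathcal{M}(H)$ contains no forest, every $\mathcal{M}(H)$-free bipartite graph has bounded Tur\'an number (K\H{o}v\'ari--S\'os--Tur\'an), which collapses the link and forces $R$ to be essentially complete $(r-1)$-partite; the other two cases yield the refined structures attached to $i=3,4$. In every case $R$, and after a cleaning step removing exceptional vertices and irregular pairs, the bulk of $G$, is homomorphic to a graph of bounded order, giving $\chi(G)\le c$; a stability argument using the minimum degree condition and $H\not\subseteq G$ to control the removed vertices then promotes this to the exact bound $\chi(G)\le c(\varepsilon,H)$.

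The main obstacle is the precise trichotomy. Matching the construction and the structural upper bound to the same constant requires pinning down exactly which combinations of a complete $(r-2)$-partite skeleton with a bipartite link force a copy of $H$: this is governed first by whether $\mathcal{M}(H)$ contains a forest (a forest in $\mathcal{M}(H)$ lets the link be sparse, since forests embed into every graph of large minimum degree) and, at the next level of refinement, by the $r$-near-acyclic condition, which is precisely the criterion under which a ``forest plus independent set'' rather than a full bipartite subgraph must appear in the blow-up. Converting this qualitative dichotomy into the exact exponents $1-\tfrac{1}{r-i/2}$, and carrying out the stability and cleaning steps so as to bound $\chi(G)$ itself rather than merely the chromatic number of the cleaned graph, is where the real difficulty lies.
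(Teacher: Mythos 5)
You should first note that the paper does not prove Theorem~\ref{thm:chromThresh} at all: it is quoted verbatim from Allen, B\"ottcher, Griffiths, Kohayakawa and Morris \cite{chromThresh} purely as motivation for the matroid analogue (Conjecture~\ref{conj:mainsimp}), so there is no in-paper proof to compare against; the relevant benchmark is the original (long and technically heavy) argument in \cite{chromThresh}. Measured against that, your outline correctly identifies the two-sided strategy (extremal constructions for the lower bound, regularity plus counting plus stability for the upper bound), and your $i=2$ lower-bound construction is sound in spirit: with $r-2$ complete parts and a high-girth, high-chromatic graph on the last block, any embedding of $H$ would exhibit a member of $\mathcal{M}(H)$ that is a forest, which is exactly what the hypothesis excludes.

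The genuine gap is that everything beyond this is asserted rather than argued, and those are precisely the points where the theorem lives. For the lower bound, the $i=3$ construction (``one and a half parts'') and the $i=4$ construction need concrete descriptions together with proofs of both $H$-freeness and unbounded chromatic number; for Borsuk/Mycielski-type graphs the latter is a topological fact, and verifying that the failure of $r$-near-acyclicity blocks every embedding of $H$ is a delicate case analysis, not a one-line observation. For the upper bound, the statement ``the other two cases yield the refined structures attached to $i=3,4$'' skips the core of \cite{chromThresh}: when $\mathcal{M}(H)$ contains a forest one must actually embed $H$ at minimum degree just above $1-\tfrac{1}{r-3/2}$ (or bound $\chi(G)$), and in the near-acyclic case just above $1-\tfrac{1}{r-2}$; neither follows from K\H{o}v\'ari--S\'os--Tur\'an applied to the reduced graph, and the passage from ``the cleaned graph has bounded chromatic number'' to ``$\chi(G)$ itself is bounded'' (handling the exceptional set and irregular pairs, whose vertices can have high degree into every colour class) is itself a substantial argument. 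Also note the logical bookkeeping of the ``if and only if'' trichotomy: for $i=2$ the upper bound is essentially the Erd\H{o}s--Stone threshold and the content is the construction, whereas for $i=3,4$ the content is the upper bound; your sketch blurs which direction carries the weight in which case. As it stands the proposal is a plausible roadmap, acknowledged by you to omit ``where the real difficulty lies,'' and so it is not a proof of the statement.
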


\label{sec:matrBack}

We can ask the same extremal questions for matroids.

\begin{defn}
The \emph{extremal number} for a matroid $N$ and integer $n$ is defined by
$$\ex(N,n)=\max\{|M| \mid r(M)=n, N\not\subseteq M\}.$$
\end{defn}

Note that if $\chi(N)=c$, then $N$ is contained in $BB(n,c)$ for some $n$. Geelen and Nelson prove the following analogue of the Erd\H{o}s-Stone theorem in \cite{gES}.

\begin{thm}[Geometric Erd\H{o}s-Stone, {\cite[Thm~1.2, $q=2$ case]{gES}}]
\label{thm:gES}
\[\lim_{n\rightarrow \infty} \frac{\ex(N,n)}{2^n-1}=1-2^{1-\chi(N)}.\]
\end{thm}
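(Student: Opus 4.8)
\textbf{Proof proposal for Theorem~\ref{thm:gES}.}

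The plan is to prove the two inequalities separately, mirroring the classical Erd\H{o}s-Stone argument. For the lower bound $\liminf_n \ex(N,n)/(2^n-1) \geq 1-2^{1-\chi(N)}$, I would exhibit a large extremal matroid avoiding $N$. Since $\chi(N)=c$, a copy of $N$ must use at most $c$ "independent directions" to be covered by hyperplanes; concretely, $N$ is not contained in $\BB(r,c-1)=\FF_2^r\setminus\FF_2^{r-c+1}$ for any $r$ whenever we choose the embedding correctly — more precisely, since $\chi(N)=c$ means $N$ is contained in some $\BB(m,c)$ but in no $\BB(m',c-1)$, any copy of $\BB(n,c-1)$ inside $\FF_2^n$ cannot contain $N$ (a sub-matroid of a matroid with critical number $\le c-1$ has critical number $\le c-1 < c$, using that critical number is monotone under taking sub-matroids via linear injections). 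Thus $M=\BB(n,c-1)$ works, and $|\BB(n,c-1)| = (2^n-1)-(2^{n-c+1}-1) = 2^n - 2^{n-c+1}$, giving ratio $1-2^{1-c}+2^{-n}\to 1-2^{1-c}$. This handles the easy direction.

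For the upper bound $\limsup_n \ex(N,n)/(2^n-1) \leq 1-2^{1-\chi(N)}$, I would argue that any matroid $M\subseteq\FF_2^n$ with $|M|\geq(1-2^{1-c}+\varepsilon)(2^n-1)$ must contain $N$ once $n$ is large. The natural tool is the regularity/counting machinery developed in this paper: apply the decomposition theorem (Theorem~\ref{thm:decomp}) to the indicator function of $M$, producing a structured part governed by a bounded-complexity polynomial factor, a Gowers-uniform part, and a small error. Then pass to the reduced matroid and invoke the Counting Lemma (Theorem~\ref{thm:counting}): it suffices to show that a copy of $N$ appears in the reduced matroid, since the counting lemma then produces $\Omega(2^{(\text{something})\cdot n})$ copies of $N$ in $M$ itself, in particular at least one for $n$ large. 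To locate $N$ in the reduced matroid, I would use the density hypothesis together with a Bose-Burton-type obstruction: if the reduced matroid contained no copy of $N$, then (because $\chi(N)=c$) one could cover the "dense directions" of the reduced matroid by $c-1$ hyperplanes of the atom space, which would force the density of $M$ below $1-2^{1-c}+\varepsilon$ — this is exactly where Proposition~\ref{prop:extBB} (the generalized Bose-Burton theorem) enters, applied to the pieces of $M$ indexed by the atoms of the polynomial factor, giving a coset structure that realizes the needed hyperplane cover.

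The main obstacle, I expect, is the last step: transferring "no copy of $N$ in the reduced matroid" into a clean density bound on $M$. In the graph setting this is the statement that a $K_r$-free (more generally $H$-free) reduced graph has bounded independence structure via Turán; here the analogue is subtler because the reduced matroid lives over the atoms of a nonclassical polynomial factor rather than over a clean vertex set, and "being covered by hyperplanes" must be tracked compatibly through the decomposition. Proposition~\ref{prop:extBB} is designed precisely to handle this: it lets one choose cosets $H_1,\dots,H_{c-1}$ of the relevant subgroup inside the factor's range so that the mass of $M$ captured outside the union of the corresponding hyperplanes is controlled by $\sum |M_j|/|G|$, which is where the $2^{1-c}$ savings comes from. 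One must also verify that the error and uniform parts from Theorem~\ref{thm:decomp} do not interfere — the uniform part contributes negligibly to counts of the bounded-complexity pattern $N$ by the counting lemma's uniformity hypothesis, and the error is absorbed into $\varepsilon$. A final remark: the lower-bound construction $\BB(n,c-1)$ shows the bound is tight, so no slack is available and the density threshold in the Counting Lemma application must be taken arbitrarily close to $1-2^{1-c}$, which forces $c$ in the conclusion to grow as $\varepsilon\to 0$ — consistent with the "there exists $c<\infty$" quantifier structure familiar from the chromatic threshold results.
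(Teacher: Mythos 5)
Your lower-bound construction ($\BB(n,c-1)$ avoids $N$ by monotonicity of the critical number) is fine, and your overall frame for the upper bound (decomposition theorem, reduced matroid, Counting Lemma) matches the paper. The gap is in the step that locates the pattern in the reduced matroid. You propose: if the reduced matroid contains no copy of $N$, then its dense part can be covered by $c-1$ hyperplanes (via Proposition~\ref{prop:extBB}), forcing the density of $M$ below $1-2^{1-c}+\varepsilon$. No such implication holds: avoiding a matroid $N$ with $\chi(N)=c$ does not force a $(c-1)$-hyperplane cover of anything -- only an asymptotic density bound, which is precisely the theorem you are trying to prove, so this step is essentially circular. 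An exact ``$N$-free $\Rightarrow$ covered by $c-1$ hyperplanes'' statement is true only for $N=PG(c-1,2)$ (that is the Bose--Burton theorem, Theorem~\ref{thm:BB}); for general $N$ (e.g.\ $N_{\ell,2,1}$) it is false. Proposition~\ref{prop:extBB} does not rescue this: in the paper it is used only in Section~\ref{sec:work} for the critical-threshold argument over the atom group, not in the proof of Theorem~\ref{thm:gES}.

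The missing idea is a reduction that lets you use only the classical Bose--Burton theorem together with the \emph{homomorphism} (not copy) hypothesis of the Counting Lemma. Since $\chi(N)=c$, one has $N\subseteq \BB(\ell,c)=2^{\ell-c}PG(c-1,2)$, so WLOG $N=\BB(\ell,c)$. Set $M'=M\cap R_{\varepsilon,\zeta'}$; the removal-lemma computation shows $\E_x|1_M-1_{M'}|<\zeta/2$, so $|M'|\geq(1-2^{1-c}+\zeta/2)2^n$ still exceeds the Bose--Burton threshold, and Theorem~\ref{thm:BB} applied directly to $M'$ (inside $\FF_2^n$, not on the atom space) yields an honest copy of $PG(c-1,2)$ in $M'\subseteq R_{\varepsilon,\zeta'}$. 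Because $\BB(\ell,c)$ is an iterated double of $PG(c-1,2)$, contracting the affine cubes gives a homomorphism from $N$ to $R_{\varepsilon,\zeta'}$, which is all the Counting Lemma requires; it then produces a positive number of copies of $N$ in $M$ for $n$ large. Your insistence on finding an actual copy of $N$ in the reduced matroid is both unnecessary and the source of the trouble.
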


The $\chi(N)=1$ case is known as the \emph{Binary Density Hales-Jewett} theorem. In this case, Bonin and Qin in fact show that $\ex(AG(k,2),n)<2^{\alpha_k n + 1}$, where $\alpha_k=1-2^{-(k-1)}$ \cite[Lemma~21]{boninqin}.

The special case where $N=PG(c-1,2)$ is a form of the \emph{Bose-Burton theorem}, which has a more precise statement as follows.

\begin{thm}
\label{thm:BB}
If $M$ does not contain a copy of $PG(c-1,2)$, then $|M|\leq 2^{r(M)}-2^{r(M)-c+1}$.
\end{thm}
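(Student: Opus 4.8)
The plan is to prove the contrapositive: writing $r=r(M)$, if $|M|>2^{r}-2^{r-c+1}$ then $M$ contains a copy of $PG(c-1,2)$, i.e.\ there is a $c$-dimensional subspace $W\subseteq\FF_2^r$ with $W\setminus\{0\}\subseteq M$. I would first reduce this to a clean combinatorial fact about "holes". Set $N=(\FF_2^r\setminus M)\setminus\{0\}$, the nonzero vectors missing from $M$. Since $0\notin M$, we have $|M|=2^r-|N|-1$, so the hypothesis $|M|>2^{r}-2^{r-c+1}$ is exactly $|N|\le 2^{r-c+1}-2$, and the conclusion becomes: some $c$-dimensional subspace $W$ is disjoint from $N$ (its nonzero vectors then necessarily lie in $M$). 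We may assume $r\ge c$, since if $r<c$ then $2^{r-c+1}\le 1$ and the claimed bound $|M|\le 2^r-2^{r-c+1}\ge 2^r-1$ holds automatically.

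So it suffices to establish: \emph{if $r\ge c\ge 1$ and $N\subseteq\FF_2^r\setminus\{0\}$ has $|N|\le 2^{r-c+1}-2$, then some $c$-dimensional subspace of $\FF_2^r$ is disjoint from $N$.} I would prove this by induction on $c$. For $c=1$: $N$ misses at least one of the $2^r-1$ nonzero vectors, so pick $u\notin N\cup\{0\}$ and take $W=\langle u\rangle$. For $c\ge 2$ (so $r\ge 2$): since $|N|\le 2^{r-1}-2<2^r-1$, choose $v\in\FF_2^r\setminus(N\cup\{0\})$ and let $\pi\colon\FF_2^r\to\FF_2^r/\langle v\rangle\cong\FF_2^{r-1}$ be the quotient map. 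No element of $N$ lies in $\langle v\rangle=\{0,v\}$, so $\pi(N)\subseteq\FF_2^{r-1}\setminus\{0\}$ and $|\pi(N)|\le|N|\le 2^{r-c+1}-2=2^{(r-1)-(c-1)+1}-2$; as $r-1\ge c-1\ge 1$, the induction hypothesis gives a $(c-1)$-dimensional subspace $\overline W\subseteq\FF_2^{r-1}$ with $\overline W\cap\pi(N)=\emptyset$. Then $W:=\pi^{-1}(\overline W)$ is a $c$-dimensional subspace of $\FF_2^r$ (it contains $\langle v\rangle$ and $W/\langle v\rangle\cong\overline W$), and if some $n\in N$ lay in $W$ then $\pi(n)\in\overline W\cap\pi(N)$, a contradiction; hence $W\cap N=\emptyset$.

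To finish the proof of Theorem~\ref{thm:BB}, apply the lemma to $M$ with $|M|>2^r-2^{r-c+1}$ and $r\ge c$: it produces a $c$-dimensional subspace $W$ disjoint from $N$, so every one of the $2^c-1$ nonzero vectors of $W$ lies in $M$. Composing any linear isomorphism $\FF_2^c\to W$ with the inclusion $W\hookrightarrow\FF_2^r$ exhibits $PG(c-1,2)=\FF_2^c\setminus\{0\}$ as a submatroid of $M$, proving the contrapositive and hence the theorem.

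I do not expect a genuine obstacle here; the two points requiring care are the bookkeeping of the constant and the choice of proof technique. The constant must be passed to $N$ as $|N|\le 2^{r-c+1}-2$, not $2^{r-c+1}-1$, precisely because $0$ is always a hole; this shift is what makes the $c=1$ base case (and the sharpness of the theorem) work out. As for technique: one should resist a direct double count over all $c$-dimensional subspaces, which only yields $|N|\ge(2^r-1)/(2^c-1)$, strictly weaker than $2^{r-c+1}-1$ once $r\ge 2c$; it is the projection-plus-induction argument above that gives the tight bound.
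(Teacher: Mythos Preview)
Your proof is correct. The induction on $c$ via quotienting by a line $\langle v\rangle$ outside the ``hole set'' $N$ is clean, and your bookkeeping (in particular the shift $|N|\le 2^{r-c+1}-2$ coming from $0\notin M$) is right.

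The paper takes a different route: it deduces Theorem~\ref{thm:BB} as the special case $G=\FF_2^{r(M)}$, $H=\{0\}$, $M_1=\cdots=M_{2^c-1}=M$ of Proposition~\ref{prop:extBB}. That proposition is proved by a greedy averaging argument: one picks points $H_1,\dots,H_c$ in order, at each step choosing $H_i$ so that the number of sums $H_i+\sum_{j<i}x_jH_j$ lying in $M$ is at least its average $2^{i-1}|M|/2^r$. Under the density hypothesis $|M|>2^r-2^{r-c+1}$ this average exceeds $2^{i-1}-1$, forcing every such sum into $M$; linear independence of $H_1,\dots,H_c$ then comes for free because $0\notin M$. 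So the paper builds the $c$-space one generator at a time inside $M$, whereas you peel off one dimension at a time by passing to a quotient and tracking the complement. Your argument is arguably the more standard proof of Bose--Burton and is self-contained; the paper's averaging argument is slightly less direct here but is exactly what generalizes to the multi-set, coset-level statement of Proposition~\ref{prop:extBB}, which is what the paper actually needs later in Section~\ref{sec:work}.
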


Note that taking $G=\FF_2^{r(M)}$, $H$ the trivial subgroup, and $M_1=\cdots=M_{2^c-1}=M$ in Proposition~\ref{prop:extBB} immediately yields Theorem~\ref{thm:BB}.

In \cite{tidor}, Tidor proved a result on the chromatic thresholds of projective geometries, analogous to Goddard and Lyle's result for complete graphs.

\begin{thm}[Tidor, {\cite[Thm~1.4]{tidor}}]
\label{thm:tidor}
If $|M|>(1-3\cdot 2^{-t})2^{r(M)}$ and $PG(t-1,2)\not\subseteq M$, then $\chi(M)\in \{t-1,t\}$. In particular, $\theta(PG(t-1,2))\leq 1-3\cdot 2^{-t}$.
\end{thm}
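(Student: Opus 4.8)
The plan is to reduce to the case of large $r(M)$, dispose of the lower bound $\chi(M)\ge t-1$ immediately, and obtain the upper bound $\chi(M)\le t$ by the regularity-and-counting method. For bounded rank the hypothesis is either vacuous or the conclusion is checked by hand. The bound $\chi(M)\ge t-1$ is forced by Bose--Burton: if $\chi(M)\le t-2$ then $M$ lies in a union of $t-2$ affine hyperplanes, whose complement is a subspace of dimension at least $r(M)-t+2$, so $|M|\le 2^{r(M)}-2^{r(M)-t+2}=(1-4\cdot2^{-t})2^{r(M)}$, contradicting $|M|>(1-3\cdot2^{-t})2^{r(M)}$. Granting the remaining bound $\chi(M)\le t$, the ``in particular'' follows, since $c=t$ is then a uniform cap on the critical number above density $1-3\cdot2^{-t}$, i.e.\ $\theta(PG(t-1,2))\le 1-3\cdot2^{-t}$.

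For $\chi(M)\le t$ I would transport the graph chromatic-threshold strategy to the matroid setting, with Theorem~\ref{thm:decomp} and Theorem~\ref{thm:counting} replacing the Szemer\'edi regularity and counting lemmas. Apply Theorem~\ref{thm:decomp} to $1_M$, taking the Gowers-norm degree large enough for the Counting Lemma to apply to $PG(t-1,2)$ and the error and uniformity parameters small; this yields a nonclassical polynomial factor $\cB$ of bounded complexity and a decomposition $1_M=f_1+f_2+f_3$ with $f_1=\E[1_M\mid\cB]$, $f_2$ small in $L^2$, and $f_3$ small in the relevant Gowers norm. Reading $f_1$ off on the atoms of $\cB$ gives the reduced matroid $R$, which sits in a group $G=\bigoplus_i\frac1{2^{k_i+1}}\ZZ/\ZZ$ of bounded rank and whose density differs from $|M|/2^{r(M)}$ by an arbitrarily small additive amount, hence exceeds $1-3\cdot2^{-t}$. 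Since $PG(t-1,2)\not\subseteq M$ whereas the Counting Lemma would produce $\gg 1$ copies of $PG(t-1,2)$ in $M$ from a single copy in $R$, we conclude $PG(t-1,2)\not\subseteq R$.

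It then remains to settle the purely finitary heart of the argument: a $PG(t-1,2)$-free subset $R$ of $G$ of density exceeding $1-3\cdot2^{-t}$ has critical number at most $t$, and the resulting cover pulls back to a cover of $M$ by $t$ affine hyperplanes. The version of this with the weaker threshold $1-2\cdot2^{-t}$ is merely the Bose--Burton bound, which Proposition~\ref{prop:extBB} recovers when the sets $M_j$ are all taken equal. To gain the stronger constant I would instead run Proposition~\ref{prop:extBB} with $c=t$ but with the $M_j$ chosen to record, level by level, the residual ``defect'' of $R$ off the cosets peeled so far, and then argue that once the density exceeds $1-3\cdot2^{-t}$ the inequality $(*)$ can hold only if one of the strata of the peeling is (almost) a full affine hyperplane: a proper $PG(t-2,2)$-free stratum of density above $\tfrac12$ would, when combined with the lower strata, already display a copy of $PG(t-1,2)$. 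This pins $R$ down to $\BB(\,\cdot\,,t-1)$ or $\BB(\,\cdot\,,t)$ up to an affine modification and yields $\chi(R)\le t$. (When the factor $\cB$ has degree $1$ this finitary statement is directly about binary matroids, and one could equally well run the argument on $M$ itself, much as in Goddard and Lyle's treatment of $K_r$; the reduction to $R$ is only needed to bring the higher-degree case under control.) Finally I would pull the cover back through $\cB$: each hyperplane of $G$ is a union of atoms, the part of $M$ lying off the pulled-back union sits on atoms below the reduced-matroid threshold together with the $L^2$ and Gowers errors and is therefore negligible, and a short cleaning step -- invoking the Counting Lemma once more to forbid a copy of $PG(t-1,2)$ reappearing -- absorbs it, giving $\chi(M)\le t$.

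The main obstacle is precisely this finitary core: proving the sharp ``factor $3$'' stability form of the Bose--Burton theorem, and -- the more delicate point -- making its structural conclusion robust enough that the negligible exceptional set left after the pull-back cannot covertly raise the critical number of $M$. By comparison, invoking the decomposition theorem, extracting the reduced matroid, and applying the Counting Lemma are routine once those tools are available.
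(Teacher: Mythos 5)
First, a point of orientation: the paper does not prove Theorem~\ref{thm:tidor} at all --- it is quoted as background from Tidor \cite{tidor} --- so there is no in-paper proof to compare against, and your outline must stand on its own. Its easy parts are fine: the Bose--Burton computation giving $\chi(M)\geq t-1$, the application of Theorem~\ref{thm:decomp}, the density transfer to the reduced matroid $R$, and the deduction via Theorem~\ref{thm:counting} that $PG(t-1,2)\not\subseteq R$ are all sound (modulo the cosmetic swap of the roles of $f_2$ and $f_3$). But the proof as proposed has a genuine gap at exactly the step you flag, and a second, structural one at the pull-back. The ``finitary core'' --- that a $PG(t-1,2)$-free subset of density $>1-3\cdot 2^{-t}$ of a bounded $2$-group has critical number at most $t$, in a stability form strong enough to survive the transfer --- is essentially the entire content of Tidor's theorem in the bounded-complexity model; the sketch via Proposition~\ref{prop:extBB} (``one stratum must be almost a full affine hyperplane'') is a heuristic, not an argument, and nothing in this paper supplies it: every application of the Counting Lemma here yields statements of the form ``either $\chi(M)$ is bounded by a constant depending on the regularity parameters, or $N\subseteq M$,'' never a sharp bound like $\chi(M)\leq t$.

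Second, even granting that core, the pull-back step fails as written. For a factor of degree $d\geq 2$ the atoms are level sets of nonclassical polynomials, so a ``hyperplane of $G$'' pulls back to a set cut out by polynomial conditions, not to an affine hyperplane of $\FF_2^n$; a cover of $R$ by $t$ cosets therefore does not produce a cover of $M$ by $t$ hyperplanes. Moreover $\chi$ is not robust under discarding a sparse set: to get $\chi(M)\leq t$ every element of $M$, including those lying in low-density or high-$L^2$-error atoms, must be covered by the \emph{same} $t$ hyperplanes, and your proposed cleaning step via the Counting Lemma cannot see them, since the lemma only produces copies of $N$ from atoms of density at least $\zeta$. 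This is precisely why the paper's own threshold arguments (Proposition~\ref{prop:nm1}, Theorem~\ref{thm:conditional}) never pull back a cover: they bound $\chi(M)$ by a complexity-dependent constant through the structure of a degree-$1$ factor (either $M$ misses the kernel atom, or an element $h\in M$ of it is used to build the forbidden submatroid), and the higher-degree analogue is handled there only conditionally. As it stands, your outline could at best be repaired to give the non-sharp threshold statement $\theta(PG(t-1,2))\leq 1-3\cdot 2^{-t}$ with a bound on $\chi$ depending on the regularity parameters --- and even that needs a mechanism, absent from the outline, for covering the exceptional part of $M$; the sharp conclusion $\chi(M)\in\{t-1,t\}$ is not reachable by this route without substantially new ideas.
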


To formulate the precise version of Conjecture~\ref{conj:mainsimp}, we make the following definition.

\begin{defn}
A matroid $M$ is \emph{c-near-independent} if $\chi(M)=c$ and for some $(c-2)$-codimensional subspace $H$ with $\chi(M\cap H)=2$, $H$ has a 1-codimensional subspace $S$ such that $M\cap S$ is linearly independent, and every odd circuit in $M\cap H$ contains at least four elements of $H\setminus S$.
\end{defn}

Now we state the precise form of the conjecture.

\begin{conj}[Geelen, Nelson, {\cite[Conj~5.2]{main}}]
\label{conj:main}
If $\chi(N)=c$, then $\theta(N)=1-i2^{-c}$, where $i=2$ if and only if no $(c-1)$-codimensional subspace $S$ exists such that $S\cap N$ is a set of linearly independent vectors, $i=4$ if and only if $N$ is $c$-near-independent, and $i=3$ otherwise.
\end{conj}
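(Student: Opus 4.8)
We outline a strategy for establishing this conjecture that parallels the proof of its graph-theoretic analogue, Theorem~\ref{thm:chromThresh}, with the Szemer\'edi regularity lemma and the graph counting lemma replaced respectively by the arithmetic decomposition theorem (Theorem~\ref{thm:decomp}) and the Counting Lemma (Theorem~\ref{thm:counting}). Throughout, write $c=\chi(N)$ and let $i\in\{2,3,4\}$ be the value dictated by the stated trichotomy on $N$; the two halves of the argument are the matching lower and upper bounds on $\theta(N)$.

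For the lower bound, for each admissible $i$ and each target $c'$ one must exhibit an $N$-free matroid $M$ with $\chi(M)\ge c'$ and $|M|/2^{r(M)}$ arbitrarily close to $1-i2^{-c}$. The base object is a Bose--Burton geometry $BB(r,c-1)$, of density $1-2^{1-c}=1-2\cdot 2^{-c}$ and critical number only $c-1$ (and $N$-free since $\chi$ is monotone under containment); to raise the critical number while preserving $N$-freeness one replaces the forbidden flat $\FF_2^{r-c+1}$ by a sparse matroid of large critical number chosen generically, exploiting (in case $i=2$) the hypothesis that no $(c-1)$-codimensional $S$ has $S\cap N$ independent, so that no linear copy of $N$ can be completed using only a low-density slice. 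Cases $i=3$ and $i=4$ refine this with one or two more ``free'' directions; in the near-independent case $i=4$ the extra room comes from the defining condition that every odd circuit of $N$ in the relevant flat uses at least four elements outside a fixed codimension-one subspace, which is the matroid counterpart of deleting a forest together with an independent set in the $r$-near-acyclic construction of \cite{chromThresh}. Making these constructions match the claimed thresholds exactly, and certifying that the trichotomy is sharp, is where the fine structure of $N$ first enters.

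For the upper bound, suppose $|M|\ge\alpha 2^{r(M)}$ with $\alpha>1-i2^{-c}$ and $N\not\subseteq M$. I would apply Theorem~\ref{thm:decomp} to write $\mathbf{1}_M$ as a structured part coming from a (nonclassical) polynomial factor $\mathcal{B}$, plus a Gowers-uniform part, plus an $L^1$-small error, and then form the reduced matroid $R$ on the atoms of $\mathcal{B}$ of density bounded away from $0$. By the Counting Lemma, if $R$ contains a copy of $N$ then so does $M$; hence $R$ is $N$-free, while its density still exceeds $1-i2^{-c}$ once the small error is discarded. The crux is then a stability statement: a binary matroid of bounded rank that is $N$-free and has density exceeding $1-i2^{-c}$ has critical number bounded solely in terms of $N$ --- the matroid analogue of the Andr\'asfai--Erd\H{o}s--S\'os and neighbourhood-complexity arguments in \cite{chromThresh}, with Proposition~\ref{prop:extBB} supplying a robust Bose--Burton-type bound and Tidor's Theorem~\ref{thm:tidor} handling the case $N=PG(c-1,2)$. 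Finally one transfers a bounded coloring of $R$ --- a covering of its atoms by a bounded number of hyperplane complements --- back through $\mathcal{B}$ to a covering of all but a negligible density of $M$, and absorbs the negligible remainder using that $M$ is dense everywhere, concluding $\chi(M)\le c$ (or $c-1$, as the stability statement dictates).

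The main obstacle --- and the reason this remains a conjecture --- is the stability step: one needs a structure theorem describing exactly which dense $N$-free binary matroids have bounded critical number, which hinges on isolating the correct matroid notion of the decomposition family of $N$ (the ``bipartite'' restrictions of $N$ to pairs of color classes) and, above all, on resolving the near-independent case $i=4$, which is the technical heart of \cite{chromThresh} in the graph setting. A secondary, quantitative difficulty is that the bounds produced by Theorem~\ref{thm:decomp} are of tower/Ackermann type, so some care is needed to guarantee that the density of $R$ genuinely exceeds the threshold and that pulling a coloring of $R$ back to $M$ loses only a negligible fraction of $M$; this is routine in spirit but delicate to make rigorous. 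A conditional version restricted to a special case of $N$, as carried out in Section~\ref{sec:work}, illustrates that the counting-lemma machinery is already strong enough to drive the upper-bound half once the requisite stability input is granted.
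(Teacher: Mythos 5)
This statement is an open conjecture, not a result the paper proves; the paper establishes only the lower bound (citing Theorem~\ref{thm:lower} of Geelen and Nelson), the special case $N_{\ell,2,1}$ (Proposition~\ref{prop:nm1}), and a conditional result for $N_{\ell,c,1}$ (Theorem~\ref{thm:conditional}, contingent on Conjecture~\ref{conj:polydifreg}). So there is no ``paper's proof'' to match against; a complete proof would itself be a new theorem. Your broad strategy --- decomposition theorem plus counting lemma plus a stability step --- is aligned with the paper's programme, and you correctly identify the missing stability ingredient as the central obstruction. Two points of divergence are worth flagging, though, because they affect whether your outline could be executed as written.

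First, for the lower bound there is nothing left to do: Theorem~\ref{thm:lower} already supplies all three constructions matching $1-i2^{-c}$ exactly, so this half should be cited rather than re-derived. Second, and more substantively, your upper-bound step of ``transferring a bounded coloring of $R$ back through $\cB$ to a covering of $M$ by hyperplane complements'' does not go through once $\cB$ has degree greater than one: the atoms of $\cB$ are level sets of nonclassical polynomials, and a partition of the atom space into coset-like pieces does not pull back to affine hyperplanes in $\FF_2^n$. That pullback is exactly why the $d=1$ case (Proposition~\ref{prop:nm1}) is easy --- there the factor is linear and atoms are cosets --- and why the paper must take a different route for $d>1$. What the paper actually does is a forcing argument: assume $\chi(M)$ exceeds a constant, use this to find a single $h\in M$ (inside a deep subspace on which $\cB_h$ remains regular, which is where Conjecture~\ref{conj:polydifreg} enters), pass to $M_h=M\cap(M-h)$, and apply Proposition~\ref{prop:extBB} and a counting argument in the refined factor $\cB_h$ to locate $N^*$ with $N^*\cup(N^*+h)\cup\{h\}\cong N$. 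No global coloring of $R$ is ever constructed. So the ``stability step'' you isolate is genuinely the crux, but I would caution against framing it as a black-box structure theorem for dense $N$-free matroids of bounded rank to be pulled back through $\cB$: the paper's own evidence suggests the right formulation is about finding one good contraction direction $h$, and even that currently requires an unproven regularity hypothesis on $\cB_h$.
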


In \cite{main}, Geelen and Nelson show that the conjectured expression is a valid lower bound.

\begin{thm}[Geelen, Nelson, {\cite[Thm~5.4]{main}}]
\label{thm:lower}
If $\chi(N)=c$, then $\theta(N)\geq 1-i2^{-c}$, where $i=2$ if and only if no $(c-1)$-codimensional subspace $S$ exists such that $S\cap N$ is a set of linearly independent vectors, and $i=4$ if and only if $N$ is $c$-near-independent, and $i=3$ otherwise.
\end{thm}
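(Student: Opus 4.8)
To prove Theorem~\ref{thm:lower} one must produce, for each $\alpha<1-i2^{-c}$ and each bound $t$, a matroid $M$ witnessing that $\alpha$ does not qualify in the definition of $\theta(N)$: that is, $|M|\ge\alpha\,2^{r(M)}$, $\chi(M)>t$, and $N\not\subseteq M$. The plan is to take $M$ of the shape $M=(PG(r-1,2)\setminus V)\cup\phi(W)$, where $V\le\FF_2^r$ is a subspace of some codimension $d$, $W$ is an auxiliary matroid of rank $\dim V$, and $\phi\colon\FF_2^{\dim V}\to V$ is a linear isomorphism; the codimension $d$ and the properties demanded of $W$ depend on the case. For every such $M$ one checks three things. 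Its density is $1-2^{-d}+|W|2^{-r}$, hence at least $1-2^{-d}$. If $T\le\FF_2^r$ is disjoint from $M$ then $T\subseteq V$ (since $M\supseteq PG(r-1,2)\setminus V$) and $\phi^{-1}(T)$ is a subspace of $\FF_2^{\dim V}$ of codimension $\mathrm{codim}(T)-d$ disjoint from $W$; conversely $\phi$ carries a subspace attaining $\chi(W)$ to one disjoint from $M$, so $\chi(M)=d+\chi(W)$. Finally, if $\iota$ is a linear injection with $\iota(N)\subseteq M$ and $U\coloneqq\iota^{-1}(V)$, then $\mathrm{codim}(U)\le d$ and $\iota$ restricts to an embedding of $N\cap U$ into $W$; so $N\not\subseteq M$ as soon as $W$ contains no copy of $N\cap U$ for any $U\le\FF_2^{r(N)}$ with $\mathrm{codim}(U)\le d$. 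Thus the whole problem reduces to choosing $d$ as large as the density target and the next constraint permit, and building matroids $W$ of unbounded critical number avoiding every member of the finite family $\mathcal F_d(N)\coloneqq\{\,N\cap U:\mathrm{codim}(U)\le d\,\}$. One further records the elementary fact, used below, that since $\chi(N)=c$ no subspace $U$ of codimension $\le c-2$ has $N\cap U$ linearly independent (otherwise a suitable hyperplane of $U$ would be disjoint from $N$ yet have codimension $\le c-1$), so for $d\le c-2$ the family $\mathcal F_d(N)$ has no independent members.

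For the case $i=2$ take $d=c-1$, so that $1-2^{-d}=1-i2^{-c}$ and the density requirement holds automatically. The hypothesis defining this case — that no codimension-$(c-1)$ subspace $S$ has $N\cap S$ independent — implies, after passing from an arbitrary low-codimension subspace to a codimension-$(c-1)$ one inside it, that every member of $\mathcal F_{c-1}(N)$ contains a circuit, necessarily of size at most $r(N)+1$. Hence it suffices to take $W$ of girth larger than $r(N)+1$: any embedding of $N\cap U$ into $W$ would carry a short circuit to a short circuit. The only remaining ingredient is an existence lemma — a routine alteration/second-moment argument — that for every $g$ there are matroids $W\subseteq\FF_2^m$ of girth $>g$ whose critical number grows with $m$ (e.g.\ a random set of about $2^{m/(2g)}$ vectors, with the members of its short circuits deleted, has girth $>g$ and, by a union bound over the subspaces of bounded codimension, critical number at least $m/(3g)$). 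Combining these gives $\chi(M)=(c-1)+\chi(W)\to\infty$, $N\not\subseteq M$, and density exceeding $1-2^{-(c-1)}=1-2\cdot2^{-c}$, so $\theta(N)\ge1-2\cdot2^{-c}$.

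The cases $i=4$ and $i=3$ require more delicate choices of $W$, and here the argument runs parallel to the extremal constructions of Allen et al.\ behind Theorem~\ref{thm:chromThresh}. For $i=4$ ($N$ $c$-near-independent) take $d=c-2$, again matching the density target, and build $W$ as the union of a spanning independent set (the matroid analogue of a forest) and a sparse set of large girth chosen to ``cover'' all subspaces of bounded codimension, which forces $\chi(W)$ — hence $\chi(M)$ — to grow; the $c$-near-independence of $N$ (the codimension-$(c-2)$ subspace $H$ with $\chi(N\cap H)=2$, the hyperplane $S\le H$ with $N\cap S$ independent, and the condition that every odd circuit of $N\cap H$ uses at least four elements of $H\setminus S$) is exactly what is needed to exclude an embedding of any $N\cap U$, $\mathrm{codim}(U)\le c-2$, into such a $W$. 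For $i=3$ the target $1-3\cdot2^{-c}$ is not of the form $1-2^{-d}$, so with $d=c-2$ the matroid $W$ must instead be taken dense in $\FF_2^{\dim V}$, of density tending to $1/4$; one constructs an $N$-avoiding $W$ of that density with unbounded critical number using the failure of \emph{both} the $i=2$ condition and of $c$-near-independence — this density lies below the Erd\H{o}s–Stone threshold for the critical-number-$2$ members of $\mathcal F_{c-2}(N)$, so high-critical-number constructions avoiding those are available, and the structural hypotheses prevent the remaining members from occurring.

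The main obstacle, in the $i=3$ and $i=4$ cases, is precisely this last step: translating the combinatorial conditions that separate the three values of $i$ into the exact structural features of $W$ that certify $N$-freeness while leaving room for $\chi(W)$ to be unbounded. By contrast, the uniform reduction above, the supporting existence statements for high-girth (respectively dense, structured) matroids of unbounded critical number, and all the density bookkeeping are comparatively routine.
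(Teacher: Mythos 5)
First, note that the paper does not prove this statement at all: Theorem~\ref{thm:lower} is quoted from Geelen and Nelson, so your proposal has to be judged on its own merits rather than against an in-paper argument. Your general reduction is sound: the template $M=(PG(r-1,2)\setminus V)\cup\phi(W)$, the density count $1-2^{-d}+|W|2^{-r}$, the identity $\chi(M)=d+\chi(W)$, and the observation that $N\not\subseteq M$ once $W$ avoids every $N\cap U$ with $\mathrm{codim}(U)\le d$ are all correct, and the $i=2$ case does follow from this together with the (routine, known) existence of binary matroids of arbitrarily large girth and unbounded critical number. In fact your $i=4$ case is easier than you make it: since $\chi(N\cap U)\ge c-\mathrm{codim}(U)\ge 2$ for $\mathrm{codim}(U)\le c-2$, every such $N\cap U$ contains an odd circuit of length at most $r(N)+1$, so the very same high-girth $W$ with $d=c-2$ already yields $\theta(N)\ge 1-4\cdot 2^{-c}$ for \emph{every} $N$ with $\chi(N)=c$; the hybrid "spanning independent set plus sparse high-girth set" you propose is both unnecessary and unproven as to its $\mathcal{F}_{c-2}(N)$-freeness, and $c$-near-independence plays no role in the lower bound for that case.

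The genuine gap is the $i=3$ case, which is the real content of the theorem and which you only gesture at. There you need, inside the codimension-$(c-2)$ flat, a matroid $W$ of density tending to $1/4$ with unbounded critical number avoiding every $N\cap U$ with $\mathrm{codim}(U)\le c-2$, including specific members of critical number $2$. Your justification --- that $1/4$ "lies below the Erd\H{o}s--Stone threshold for the critical-number-2 members of $\mathcal{F}_{c-2}(N)$, so high-critical-number constructions avoiding those are available" --- is a non sequitur and essentially circular: being below the extremal (Erd\H{o}s--Stone) threshold only guarantees that \emph{some} avoiding matroids of that density exist (e.g.\ Bose--Burton-type examples, which have bounded critical number); whether avoiding matroids of density near $1/4$ can simultaneously have unbounded critical number is precisely a critical-threshold statement, namely $\theta(N')\ge \tfrac14$ for the relevant $\chi=2$ sections $N'$, and establishing it requires using the failure of near-independence and an explicit dense construction (the analogue of dense triangle-free graphs with large chromatic number), which is where the substance of Geelen--Nelson's proof lies. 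As you concede in your final paragraph, this translation step is missing, so the proposal proves the $i=2$ (and, after the simplification above, $i=4$) bounds but not the $i=3$ bound, and hence does not prove the theorem.
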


Combined with the trivial upper bound of $\theta(N)\leq 1 - 2^{1-c}$ that follows immediately from Theorem~\ref{thm:gES}, it remains to show that $\theta(N)\leq 1 - 3\cdot 2^{-c}$ when $N$ has a $(c-1)$-codimensional flat that is independent, and that $\theta(N)\leq 1 - 4\cdot 2^{-c}$ when $N$ is $c$-near-independent.

For $\ell\geq c+k-1$, $c>1$, define $N_{\ell,c,k}$ to be the rank $\ell$ matroid consisting of the union of $BB(\ell,c-1)$ with $k$ linearly independent vectors contained inside the complement of $BB(\ell,c-1)$ in $\FF_2^\ell$. This represents the most general maximal case of matroids of critical number $c$ for which $i$ is conjectured to be $3$. In Section~\ref{sec:work}, we verify Conjecture~\ref{conj:main} for $N_{\ell,2,1}$, the union of an affine geometry and a single other point. We then discuss how the tools in this paper could be applied in an approach to the general $N_{\ell,c,1}$ case.

\section{Regularity and Counting}

A \emph{regularity} or \emph{decomposition} result, in general, splits a generic object (e.g. a graph, a subset of an abelian group, or a function) into a structured part, a uniform part, and possibly a small error. A corresponding \emph{counting lemma} then guarantees that the number of copies of a suitable subobject contained in this object can be well-approximated by the number of copies contained in the structured part. The most well-known example of such a pair of results is the Szemerédi regularity lemma and the corresponding counting lemma for subgraphs contained in the reduced graph. As mentioned in the introduction, the use of this pair of lemmas is key to the argument used in \cite{chromThresh} to prove Theorem~\ref{thm:chromThresh}.

A simple example of an analogous regularity result for matroids is Green's regularity lemma (specialized to $\FF_2^n$). To state it, we make a few preliminary definitions.

\begin{defn}
Let $V=\FF_2^n$. A set $X\subset V$ is \emph{linearly $\varepsilon$-uniform} in $V$ if $|\widehat{1_X}(\xi)|\leq \varepsilon$ for all nonzero $\xi\in \hat{V}=V$, or equivalently if for each hyperplane $H\leq V$,

\[||X\cap H|-|X\setminus H||\leq \varepsilon |V|.\]
\end{defn}

\begin{defn}
Let $X\subseteq V=\FF_2^n$. A subspace $W\leq V$ is \emph{linearly $\varepsilon$-regular} with respect to $X$ if for all but $\varepsilon |V|$ values of $v\in V$, $(X-v)\cap W$ is linearly $\varepsilon$-uniform in $W$.
\end{defn}

Green's regularity result is the following.

\begin{thm}[Geometric Regularity Lemma, {\cite[Thm~2.1]{gReg}}]
\label{thm:greenReg}
For any $\varepsilon\in(0,\frac{1}{2})$ there is a $T>0$ such that for any $V=\FF_2^n$ and any subset $X\subset V$ there is a subspace $W\subseteq V$ of codimension at most $T$ that is linearly $\varepsilon$-regular with respect to $X$.

\end{thm}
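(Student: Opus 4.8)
The plan is to prove this by a Fourier-analytic energy-increment argument. Identify $\hat V$ with $V=\FF_2^n$ via the standard bilinear form, so that for a subspace $W\le V$ the annihilator $W^\perp=\{\xi\in V:\xi\cdot w=0\ \forall w\in W\}$ is a subspace of dimension $\mathrm{codim}(W)$ and $\hat W$ is canonically $V/W^\perp$. Throughout, $\widehat{1_X}$ denotes the Fourier transform of $1_X$ on all of $V$ (normalized as in the excerpt), and I will track the energy $q(W):=\sum_{\xi\in W^\perp}|\widehat{1_X}(\xi)|^2$, which by Parseval satisfies $q(W)\in[0,|X|/|V|]\subseteq[0,1]$ and is nondecreasing as $W$ shrinks.

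The first step is to translate coset-uniformity into a statement about the global Fourier coefficients of $1_X$. A routine restriction computation gives, for every subspace $W\le V$ of codimension $d$, every $v\in V$, and every nonzero $\eta\in\hat W$ with a lift $\tilde\eta\in V$,
\[
\bigl|\widehat{1_{(X-v)\cap W}}(\eta)\bigr|=\Bigl|\sum_{\psi\in W^\perp}(-1)^{\psi\cdot v}\,\widehat{1_X}(\tilde\eta+\psi)\Bigr|=:\bigl|F_\gamma(v)\bigr|,
\]
where the coefficient on the left is computed in $\hat W$, the modulus on the right depends on $\tilde\eta$ only through its coset $\gamma=\tilde\eta+W^\perp$, and it depends on $v$ only modulo $W$. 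Writing $m(\gamma):=\sum_{\zeta\in\gamma}|\widehat{1_X}(\zeta)|^2$ for the Fourier $\ell^2$-mass on a coset $\gamma$ of $W^\perp$, I will extract two facts from this identity: orthogonality of the characters $v\mapsto(-1)^{\psi\cdot v}$ gives $\E_{v}|F_\gamma(v)|^2=m(\gamma)$, and Cauchy--Schwarz gives $|F_\gamma(v)|^2\le 2^{d}m(\gamma)$ for every $v$. Consequently, since uniformity of $(X-v)\cap W$ depends only on $v$ modulo $W$, the failure of $W$ to be linearly $\varepsilon$-regular unpacks to: more than $\varepsilon 2^{d}$ cosets of $W$ admit a nonzero coset $\gamma_v$ of $W^\perp$ with $|F_{\gamma_v}(v)|>\varepsilon$, in which case $m(\gamma_v)>\varepsilon^{2}2^{-d}$.

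The heart of the argument is an energy-increment lemma: if $W$ of codimension $d$ is not linearly $\varepsilon$-regular with respect to $X$, then there is $W'\le W$ with $\mathrm{codim}(W')\le d+2^{d}\varepsilon^{-2}$ and $q(W')\ge q(W)+\varepsilon^{3}$. To prove it I would call a nonzero coset $\gamma$ of $W^\perp$ \emph{heavy} if $m(\gamma)>\varepsilon^{2}2^{-d}$; since $\sum_{\gamma\ne W^\perp}m(\gamma)=\|1_X\|_2^2-q(W)\le 1$, there are fewer than $2^{d}\varepsilon^{-2}$ heavy cosets. Each bad coset of $W$ witnesses a heavy $\gamma_v$, so $\varepsilon^{2}<|F_{\gamma_v}(v)|^{2}\le\sum_{\gamma\text{ heavy}}|F_\gamma(v)|^{2}$; summing over the more than $\varepsilon 2^{d}$ bad cosets and invoking $\sum_{v\bmod W}|F_\gamma(v)|^{2}=2^{d}m(\gamma)$ yields $\sum_{\gamma\text{ heavy}}m(\gamma)>\varepsilon^{3}$. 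Taking $(W')^{\perp}$ to be the span of $W^\perp$ together with one representative from each heavy coset then gives both the codimension bound and $q(W')\ge q(W)+\sum_{\gamma\text{ heavy}}m(\gamma)>q(W)+\varepsilon^{3}$. Finally I would iterate from $W_0=V$: since $q$ stays in $[0,1]$ and grows by at least $\varepsilon^{3}$ at each application, the process must halt after at most $\lceil\varepsilon^{-3}\rceil$ steps at a linearly $\varepsilon$-regular subspace, whose codimension is what one gets by iterating $d\mapsto d+2^{d}\varepsilon^{-2}$ that many times from $0$ — a tower-type but finite function $T=T(\varepsilon)$.

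I expect the energy-increment lemma to be the only genuinely delicate point: once the restriction identity and Parseval are in hand the rest is bookkeeping, but the lemma requires passing from ``many cosets each carry some large restricted Fourier coefficient'' to ``a controlled family of frequencies carries a fixed share of the global $\ell^2$-mass,'' and it is exactly the inequality $|F_\gamma(v)|^2\le 2^{d}m(\gamma)$ that makes this work, at the cost of a $2^{d}$ loss. That loss forces $T(\varepsilon)$ to be tower-type, but since the statement only asks for the existence of \emph{some} finite $T$, this is not an obstruction.
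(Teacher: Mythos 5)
Your energy-increment argument is correct: the restriction identity $\bigl|\widehat{1_{(X-v)\cap W}}(\eta)\bigr|=\bigl|\sum_{\psi\in W^\perp}(-1)^{\psi\cdot v}\widehat{1_X}(\tilde\eta+\psi)\bigr|$, the two estimates $\E_v|F_\gamma(v)|^2=m(\gamma)$ and $|F_\gamma(v)|^2\le 2^d m(\gamma)$, the heavy-coset count, the $\varepsilon^3$ energy increment with codimension growth $d\mapsto d+2^d\varepsilon^{-2}$, and the bounded iteration all fit together as claimed. Note that the paper itself gives no proof of this statement (it is quoted from Green's paper), and your argument is essentially the standard Fourier-analytic energy-increment proof of that result specialized to $\FF_2^n$, so there is nothing further to compare.
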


This notion of regularity readily yields a counting lemma for triangles (and indeed, all odd circuits) in matroids, which Geelen and Nelson use in their proofs that $\theta(PG(1,2))\leq \frac{1}{4}$ \cite{main} and that odd circuits have critical threshold $0$ \cite{geelenOdd}. In Section~\ref{sec:vernm1}, we use a method along the same lines as their proof to verify Conjecture~\ref{conj:main} for $N_{\ell,2,1}$.

Unfortunately, the linear Fourier-analytic notion of regularity provided by Theorem~\ref{thm:greenReg} is not strong enough for a counting lemma to hold for general submatroids $N$. In attempting to translate the ideas of \cite{chromThresh} into tools for the matroid threshold problem, we therefore need a stronger regularity statement, one that admits a corresponding, more general counting lemma.

\subsection{Regularity on Matroids} \label{sec:decomps}
After the inverse conjecture for the Gowers norm over finite fields of low characteristic was established \cite{tao2012inverse}, stronger regularity results, using regularity with respect to the Gowers norms, came within reach. The primary regularity result that we will use is stated in \cite{VeryCountingMaybe} as a decomposition theorem for bounded functions on $\FF_p^n$. To work with this result, we will first need to introduce a few technical concepts from higher-order Fourier analysis.

Throughout this section, $p$ is taken to be a fixed prime. For our work with binary matroids, we will always take $p=2$. Given a function $f:\FF_2^n\rightarrow\{0,1\}$, in our case usually the indicator function of some matroid $M\subseteq \FF_2^n\setminus\{0\}$, the decomposition theorem will split it into a sum of three parts: a structured part, a uniform part, and a small error. Here we will address the technical issues that arise in working with the first two parts. In the sections below, we largely quote the terminology and notation used in \cite{VeryCountingMaybe} and \cite{hatamiRegCount}.

\subsubsection{The Gowers norm and nonclassical polynomials}

\begin{defn}
Given a function $f:\FF_p^n\rightarrow\CC$ and an integer $d\geq 1$, the \emph{Gowers norm of order $d$} for $f$ is
\[\|f\|_{U^d}=\left|\E_{h_1,\dots,h_d,x\in\FF_p^n}\left[\prod_{i_1,\dots,i_d\in\{0,1\}}\mathcal{C}^{i_1+\cdots+i_d}f\left(x + \sum_{j=1}^d i_j h_j\right)\right]\right|^{1/2^d},\]
where $\mathcal{C}$ denotes the conjugation operator.
\end{defn}

It is easy to see that $\|f\|_{U^d}$ is increasing in $d$ and is indeed a norm for $d\geq 2$, and that $\|f\|_{U^1}=|\E[f]|$ and $\|f\|_{U^2}=\|\hat{f}\|_{l^4}$. So, the Gowers norm of order $2$ is related to the Fourier bias used in Green's regularity lemma, a measure of correlation with exponentials of linear polynomials: $\|f\|_{U^2}$ is large if and only if $\sup_{\xi\neq 0}|\hat{f}(\xi)|$ is large, i.e. if and only if $f$ is strongly correlated with the exponential of some linear polynomial. It is natural to expect the Gowers norm of order $d+1$ to be similarly related to polynomials of degree $d$; conjectures that a large Gowers-$(d+1)$ norm implies correlation with the exponential of a degree $d$ polynomial, in various settings, were known as \emph{inverse conjectures} for the Gowers norms.

For large $d$ over fields of small characteristic, it turns out that the inverse conjectures are not true as stated; the right notion to consider, over which an inverse theorem for the Gowers norms actually holds, is that of a \emph{nonclassical polynomial}.

\begin{defn}
\label{defn:polynomial}
Let $\TT=\RR/\ZZ$. Given an integer $d\geq 0$, a function $P:\FF_p^n\rightarrow\TT$ is called a \emph{(non-classical) polynomial of degree at most $d$} if for all $h_1,\dots,h_d,x\in\FF_p^n$,
\[\sum_{i_1,\dots,i_d\in\{0,1\}}(-1)^{i_1+\cdots+i_d}P\left(x + \sum_{j=1}^d i_j h_j\right)=0.\]
\end{defn}

Since we will be working mostly with non-classical polynomials, it should be assumed that any use of the word ``polynomial'' refers to a possibly non-classical polynomial unless otherwise specified. 

Let $\expo{x}=e^{2\pi i x}$. It follows from definition that $\|f\|_{U^{d+1}}=1$ if and only if $f=\expo{P}$ for some non-classical polynomial $P$ of degree at most $d$, and $\|f\cdot \expo{P}\|_{U^{d+1}}=\|f\|_{U^{d+1}}$ for any function $f$ and non-classical polynomial $P$ of degree at most $d$. Non-classical polynomials can be characterized in terms of classical ones by the following lemma of Tao and Ziegler \cite{tao2012inverse}.

\begin{lem}[{\cite[Lemma~1.7]{tao2012inverse}}]\label{lem:nonclassrep}
Let $|\cdot|$ denote the standard map from $\FF_p$ to $\{0,1,\dots,p-1\}$. A function $P: \FF_p^n \to \T$ is a polynomial of degree at most $d$ if and
only if $P$ can be represented as
$$P(x_1,\dots,x_n) = \alpha + \sum_{\substack{0\leq d_1,\dots,d_n< p; k \geq 0:
  \\ 0 < \sum_i d_i \leq d - k(p-1)}} \frac{ c_{d_1,\dots, d_n,
  k} |x_1|^{d_1}\cdots |x_n|^{d_n}}{p^{k+1}} \mod 1,
$$
for a unique choice of $c_{d_1,\dots,d_n,k} \in \{0,1,\dots,p-1\}$
and $\alpha \in \T$.  The element $\alpha$ is called the {\em
  shift} of $P$, and the largest integer $k$ such that there
exist $d_1,\dots,d_n$ for which $c_{d_1,\dots,d_n,k} \neq 0$ is called
the {\em depth} of $P$. A depth-$k$ polynomial $P$ takes values in a coset of the subgroup $\U_{k+1}\coloneqq \frac{1}{p^{k+1}} \Z/\Z$. Classical polynomials correspond to
polynomials with $0$ shift and $0$ depth.
\end{lem}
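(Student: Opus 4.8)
The plan is to prove both implications together with the uniqueness claim, treating the shift $\alpha$ separately from the monomials throughout. Write $D_h f(x) = f(x+h) - f(x)$; by definition $P$ is a polynomial of degree at most $d$ exactly when $D_{h_1}\cdots D_{h_{d+1}}P \equiv 0$, and $D_h$ lowers degree by at least one, commutes with addition, and commutes with multiplication by any fixed integer. I will lean on two facts: (i) a function $\mathbb{F}_p^n \to \tfrac1p\mathbb{Z}/\mathbb{Z}$ is a polynomial in this sense if and only if it is a classical polynomial in $x_1,\dots,x_n$, and then its nonclassical degree equals its ordinary degree (these are precisely the shift-$0$, depth-$0$ polynomials; in particular indicators of affine-coordinate conditions have degree at most $p-1$); and (ii) the single-variable carry identity $|x+h| = |x|+|h| - p\cdot\mathbf{1}[\,|x|+|h|\ge p\,]$ on $\mathbb{F}_p$.

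\textbf{The ``if'' direction.} I would show each monomial $m_{d_1,\dots,d_n,k}(x) := p^{-(k+1)}\prod_i |x_i|^{d_i} \bmod 1$ has degree at most $\sum_i d_i + k(p-1)$, by induction on $k$; the case $k=0$ is fact (i). For $k\ge 1$, expanding $D_h m_{\vec d,k}$ via the carry identity writes it as a sum of depth-$k$ monomials of strictly smaller total exponent, plus terms equal to $p^{-k}$ times a product of carry-indicators and a monomial; the indicators are classical polynomials of degree at most $p-1$ in each variable, so those terms have depth at most $k-1$ and, by induction, degree at most $\sum_i d_i + k(p-1) - 1$. Hence $D_h m_{\vec d, k}$ has degree at most $\sum_i d_i + k(p-1) - 1$, so $m_{\vec d,k}$ has degree at most $\sum_i d_i + k(p-1)$. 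Since the stated sum runs only over $(\vec d,k)$ with $\sum_i d_i + k(p-1)\le d$, adding the constant $\alpha$ keeps the degree at most $d$.

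\textbf{Uniqueness.} Subtracting two representations of one function gives a representation of $0$; evaluating at $x=0$, where all monomials vanish, forces the two shifts to agree. It then suffices to show that if $\sum_{\vec d, k} c_{\vec d,k}\, m_{\vec d,k} \equiv 0$ with all $c_{\vec d,k}\in\{0,\dots,p-1\}$, then every $c_{\vec d,k}=0$. Let $K$ be the largest depth with a nonzero coefficient; taking the integer representatives $\prod_i|x_i|^{d_i}\in\mathbb{Z}_{\ge0}$ and multiplying through by $p^{K+1}$ yields $\sum_{\vec d,k} c_{\vec d,k}\,p^{K-k}\prod_i|x_i|^{d_i}\equiv 0 \pmod{p^{K+1}}$ for all $x$. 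Reducing mod $p$ leaves only the depth-$K$ terms, and since monomials in $n$ variables with every exponent below $p$ are linearly independent over $\mathbb{F}_p$ as functions on $\mathbb{F}_p^n$ (iterated Vandermonde, or the Combinatorial Nullstellensatz), all $c_{\vec d,K}\equiv 0\pmod p$, hence vanish, contradicting the choice of $K$. As a byproduct this shows the depth of a degree-$\le d$ polynomial is automatically at most $\lfloor(d-1)/(p-1)\rfloor$.

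\textbf{The ``only if'' direction, and the main obstacle.} I would induct on $d$: if $d=0$ then $P$ is constant and $P\equiv\alpha:=P(0)$; if $d\ge1$, each $Q_i:=D_{e_i}P$ has degree at most $d-1$, so by induction $Q_i = \beta_i + \sum c^{(i)}_{\vec d,k}\,m_{\vec d,k}$. Recover $P$ by telescoping along the coordinate path from $0$ to $x$,
\[ P(x) = P(0) + \sum_{i=1}^n \sum_{t=0}^{|x_i|-1} Q_i(x_1,\dots,x_{i-1},t,0,\dots,0), \]
which is an identity because the inner summand is $P(\dots,t{+}1,\dots)-P(\dots,t,\dots)$ with no wrap-around (as $t\le p-2$), so the double sum telescopes to $P(x)-P(0)$. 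Substituting the representations of the $Q_i$ and collecting, the one step requiring care is evaluating the inner sums: for $0\le a<p$, the map $m\mapsto\sum_{t=0}^{m-1}t^a$ is on $\{0,\dots,p-1\}$ an integer-valued polynomial of degree at most $\min(a+1,p-1)$ — expand it in the basis $\binom{m}{j}$ with $j\le a+1$ (note $\binom{m}{p}$ vanishes on this range and $j!$ is a unit mod $p^{k+1}$ for $j<p$), then re-expand the falling factorials as ordinary monomials with integer coefficients. This keeps all denominators powers of $p$ and shows each depth-$k$ monomial of $Q_i$ contributes within depth $k$ with total exponent at most one larger, i.e.\ within $\sum d_i + k(p-1)\le d$. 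The constant terms $\beta_i$ produce $|x_i|\beta_i$; the periodicity relation $\sum_{t=0}^{p-1}Q_i(\dots,t,\dots)=P(\cdot+pe_i)-P(\cdot)=0$ shows $\beta_i$ has a power-of-$p$ denominator, and the divisibility $p\mid\sum_{t=0}^{p-1}t^a$ for $1\le a\le p-2$ (with the different, but harmless, behavior at $a=p-1$, which forces $Q_i$ to already have small depth there) is exactly what keeps $|x_i|\beta_i$ within the degree bound. Collecting all constants into a single $\alpha$ closes the induction. The genuinely delicate part is this last bookkeeping — verifying that ``integrating'' the derivative representations never violates $\sum_i d_i \le d-k(p-1)$ — and it reduces entirely to the elementary fact that $\sum_{t=0}^{p-1}t^a$ is divisible by $p$ precisely when $(p-1)\nmid a$; everything else is routine.
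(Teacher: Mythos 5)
The paper does not actually prove this statement; Lemma~\ref{lem:nonclassrep} is quoted verbatim from Tao and Ziegler \cite{tao2012inverse} (their Lemma~1.7), so there is no in-paper proof to compare against. Judged on its own terms, your proposal reconstructs the Tao--Ziegler argument along the expected lines --- iterated differences $D_h$ to characterize degree, the carry identity to control depth, base-$p$ decomposition to get uniqueness, and telescoping plus Faulhaber-type summation to ``integrate'' back up from the partial derivatives $Q_i=D_{e_i}P$. The uniqueness argument (multiply by $p^{K+1}$, reduce mod $p$, use $\FF_p$-linear independence of low-exponent monomials) is clean and correct, and the ``only if'' sketch correctly identifies the real bookkeeping hazards (no wrap-around on the coordinate path, power-of-$p$ denominators via $\binom{m}{j}$ with $j<p$, the special role of $\sum_{t=0}^{p-1}t^{p-1}$).

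There is, however, one genuine gap in the ``if'' direction as written. You induct on $k$ alone and assert that expanding $D_h m_{\vec d,k}$ produces (a) depth-$k$ monomials of strictly smaller total exponent and (b) depth-$\leq k-1$ terms handled by the inductive hypothesis. Part (b) is fine, but the degree bound for the terms in (a) is precisely the claim you are in the middle of proving --- at depth $k$, just with a smaller exponent vector --- so the induction on $k$ does not reach it. The fix is easy but must be stated: induct instead on the quantity $\sum_i d_i + k(p-1)$ (or lexicographically on $(k,\sum d_i)$), so that both ``smaller exponent at the same depth'' and ``strictly smaller depth with possibly larger classical degree'' fall under the hypothesis. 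A second, smaller point worth pinning down: when you invoke the carry indicators as ``classical polynomials of degree at most $p-1$,'' you should note that the Lagrange representation of $\mathbf{1}[\,|x_i|\geq a\,]$ has denominators dividing $(p-1)!$, which is a unit in $\ZZ_{(p)}$, so multiplying by $p^{-k}$ genuinely produces a function valued in $\UU_k$ and expressible in the required monomial basis; otherwise the step from ``indicator mod $p$'' to ``indicator at precision $p^{k}$'' is not automatic. With those two repairs the argument goes through and is essentially the standard one.
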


For convenience we will assume henceforth that all polynomials have shift $0$, so that all polynomials of depth $k$ take values in  $\UU_{k+1}$; this will not affect our arguments.

\subsubsection{Polynomial factors and rank}

\begin{defn}
A \emph{polynomial factor} $\mathcal{B}$ of $\FF_p^n$ is a partition of $\FF_p^n$ into finitely many pieces, called \emph{atoms}, such that for some polynomials $P_1,\dots,P_C$, each atom is defined as the solution set $\{x|\forall i\in\{1,\dots,C\} P_i(x)=b_i\}$ for some $(b_1,\dots,b_C)\in \TT^C$. The \emph{complexity} of $\mathcal{B}$ is the number of defining polynomials $|\mathcal{B}|=C$, and the \emph{degree} is the highest degree among $P_1,\dots,P_C$. If $P_i$ has depth $k_i$, the \emph{order} of $\mathcal{B}$ is $\|\mathcal{B}\|=\prod_{i=1}^C p^{k_i+1}$, an upper bound on the number of atoms in $\mathcal{B}$.
\end{defn}

\begin{defn}
The \emph{$d$-rank} $\rank_d(P)$ of a polynomial $P$ is the smallest integer $r$ such that $P$ can be expressed as a function of $r$ polynomials of degree at most $d-1$. The \emph{rank} of a polynomial factor defined by $P_1,\dots,P_C$ is the least integer $r$ for which there is a tuple $(\lambda_1,\dots,\lambda_C)\in\ZZ^C$, with $(\lambda_1 \mod p^{k_1+1}, \ldots, \lambda_C \mod p^{k_C+1})\neq 0^C$, such that $\rank_d(\sum_{i=1}^C \lambda_iP_i) \leq r$, where $d=\max_i \deg(\lambda_i P_i)$.

Given a polynomial factor $\cB$ and a function $r:\Z_{>0}\rightarrow \Z_{>0}$ (possibly a constant, as in Proposition~\ref{prop:regUni} below), we say that $\cB$ is \emph{$r$-regular} if $\cB$ is of rank larger than $r(|\cB|)$.
\end{defn}

As described in~\cite{hatamiRegCount}, a polynomial of sufficiently high rank is, intuitively, one that is ``generic,'' with no unexpected decompositions into polynomials of lower degree. A sufficiently regular polynomial factor is then one whose constituent polynomials do not have any unexpected dependencies. This algebraic notion of being generic turns out to be related to an analytic notion of uniformity for polynomial factors.

\begin{defn}
For $\varepsilon>0$, we say that $\cB$ is \emph{$\varepsilon$-uniform} if for all $(\lambda_1,\dots,\lambda_C)\in\ZZ^C$ with $(\lambda_1 \mod p^{k_1+1}, \ldots, \lambda_C \mod p^{k_C+1})\neq 0^C$,
\[\left\|\expo{\sum_i \lambda_i P_i}\right\|_{U^d}<\varepsilon.\]
\end{defn}

Thus, a sufficiently uniform polynomial factor is one where linear combinations of the constituent polynomials are not strongly correlated with any lower-degree polynomials. The following fact, noted in \cite{hatamiRegCount}, follows from Theorem~1.20 of \cite{tao2012inverse}.

\begin{prop}\label{prop:regUni}
For every $\varepsilon>0,d\in\Z_{>0}$ there exists an integer $r=r(d,\varepsilon)$ such that every $r$-regular degree $d$ polynomial factor $\cB$ is $\varepsilon$-uniform.
\end{prop}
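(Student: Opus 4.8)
The plan is to reduce the statement to a single–polynomial inverse-type estimate and then run a short quantifier chase through the definition of the rank of a factor.

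\emph{Step 1: the one-polynomial statement.} First I would record the following consequence of the higher-order Fourier analysis of \cite{tao2012inverse}: for every $\varepsilon>0$ and $d'\in\Z_{>0}$ there is an integer $r_1(d',\varepsilon)$ such that every polynomial $Q$ of degree at most $d'$ with $\rank_{d'}(Q)>r_1(d',\varepsilon)$ satisfies $\|\expo{Q}\|_{U^{d'}}<\varepsilon$. This is the contrapositive of the assertion that a polynomial whose exponential has non-negligible $U^{d'}$ norm must be a function of boundedly many polynomials of degree $<d'$: if $\|\expo{Q}\|_{U^{d'}}\ge\varepsilon$, the inverse theorem for the $U^{d'}$ norm supplies a polynomial $R$ of degree at most $d'-1$ with $|\E_x \expo{Q(x)-R(x)}|\ge\delta(\varepsilon,d')$, and the equidistribution theorem for high-rank polynomials (Theorem~1.20 of \cite{tao2012inverse}) then bounds $\rank_{d'}(Q-R)$ by some $r_0(\delta,d')$, so that $Q=(Q-R)+R$ has $\rank_{d'}(Q)\le r_0+1$. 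Taking $r=r(d,\varepsilon):=\max_{1\le d'\le d} r_1(d',\varepsilon)$ makes the estimate uniform over all degrees that can occur at or below $d$.

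\emph{Step 2: unwinding the rank of a factor.} Let $\mathcal{B}$ be a degree-$d$ polynomial factor defined by $P_1,\dots,P_C$ with $P_i$ of depth $k_i$, and suppose $\mathcal{B}$ is $r$-regular, i.e.\ $\rank(\mathcal{B})>r$. By the definition of the rank of a factor there is then no tuple $(\lambda_1,\dots,\lambda_C)\in\ZZ^C$ with $(\lambda_1\bmod p^{k_1+1},\dots,\lambda_C\bmod p^{k_C+1})\ne 0^C$ for which $\rank_{d'}\big(\sum_{i=1}^C\lambda_iP_i\big)\le r$, where $d'=\max_i\deg(\lambda_iP_i)$; equivalently, \emph{every} admissible combination $Q=\sum_i\lambda_iP_i$ has $\rank_{d'}(Q)>r\ge r_1(d',\varepsilon)$. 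Since $\deg Q=d'\le d$, Step~1 gives $\|\expo{\sum_i\lambda_iP_i}\|_{U^{d'}}<\varepsilon$ for every such $\lambda$, which is exactly the condition that $\mathcal{B}$ be $\varepsilon$-uniform (reading the order of the Gowers norm in that definition as $\max_i\deg(\lambda_iP_i)$, consistently with the definition of $\rank(\mathcal{B})$).

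\emph{Main obstacle.} The only substantive content is Step~1, which rests on the (deep) inverse and equidistribution theorems of \cite{tao2012inverse} for the Gowers norms over fields of low characteristic; once these are invoked in the rank-versus-uniformity form above, the proposition is a short bookkeeping argument. The points to watch are that the rank threshold can be taken independent of the complexity $C$ of $\mathcal{B}$, that one must take a maximum of $r_1(d',\varepsilon)$ over all lower degrees $d'\le d$ (since a combination $\sum_i\lambda_iP_i$ may have degree strictly below $d$), and that the order of the Gowers norm appearing in the definition of $\varepsilon$-uniformity is to be read as the actual degree of the combination under consideration.
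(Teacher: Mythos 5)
Your proposal is correct and takes essentially the same route as the paper, which gives no argument beyond noting (following \cite{hatamiRegCount}) that the proposition is the factor-level bookkeeping consequence of the single-polynomial rank-versus-Gowers-norm statement, Theorem~1.20 of \cite{tao2012inverse}. Your Step~2 is exactly that bookkeeping (with the correct reading of the norm order as $\max_i \deg(\lambda_i P_i)$ and the observation that $r$ can be taken independent of $C$), and your Step~1 merely re-derives the cited single-polynomial input from the inverse theorem plus the bias/equidistribution theorem for non-classical polynomials, so it rests on the same deep inputs from \cite{tao2012inverse}.
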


For our purposes, both regularity and uniformity can be treated as black boxes ensuring a polynomial factor is sufficiently pseudorandom to apply counting results in it; see Section~\ref{sec:count} for some results where these properties are relevant.

\subsubsection{The Strong Decomposition Theorem}
We can finally state the main regularity result we will use, the strong decomposition theorem of \cite{VeryCountingMaybe}.

\begin{thm}[Strong Decomposition Theorem, {\cite[Theorem~5.1]{VeryCountingMaybe}}]\label{thm:decomp}
Suppose $\delta > 0$ and $ d \geq 1$ are integers. Let $\eta: \N
\to \R^+$ be an arbitrary non-increasing function and $r: \N \to \N$ be an arbitrary
non-decreasing function. Then there exist $N =
N(\delta, \eta, r, d)$ and $C =
C(\delta,\eta,r,d)$ such that the following holds.

Given $f: \FF_p^n \to \{0,1\}$ where $n > N$, there
exist three functions $f_1, f_2, f_3: \FF_p^n \to
\R$ and a polynomial factor  $\cB$ of
degree at most $d$ and complexity at most $C$ such that the following conditions hold:
\begin{itemize}
\item[(i)]
$f=f_1+f_2+f_3$.
\item[(ii)]
$f_1 = \E[f|\cB]$, the expected value of $f$ on an atom of $\cB$.
\item[(iii)]
$\|f_2\|_{U^{d+1}} \leq \eta(|\cB|)$.
\item[(iv)]
$\|f_3\|_2 \leq \delta$.
\item[(v)]
$f_1$ and $f_1 + f_3$ have range $[0,1]$; $f_2$ and $f_3$ have range $[-1,1]$.
\item[(vi)]
$\cB$ is $r$-regular.
\end{itemize}
\end{thm}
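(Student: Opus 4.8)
The plan is to prove Theorem~\ref{thm:decomp} by iterating an energy-increment argument driven by the inverse theorem for the Gowers norms, while interleaving a regularization step that replaces the working polynomial factor by a refinement of much higher rank. First I would set up the energy (mean-square of the conditional expectation): for a polynomial factor $\cB$, write $E(\cB) = \|\E[f\mid \cB]\|_2^2$, which lies in $[0,1]$ and is non-decreasing under refinement. Starting from the trivial factor, I would build an increasing sequence of factors $\cB_0 \preceq \cB_1 \preceq \cdots$ as follows. Given the current factor $\cB_i$, set $f_1 := \E[f\mid\cB_i]$ and $g := f - f_1$. If $\|g\|_{U^{d+1}} \leq \eta(|\cB_i|)$, we are (almost) done: take $f_2 := g$, $f_3 := 0$, and the theorem's conclusions (i)--(v) hold, except possibly (vi). Otherwise, by the inverse theorem for the $U^{d+1}$ norm over $\FF_2$ (\cite{tao2012inverse}, Theorem~1.20), there is a nonclassical polynomial $Q$ of degree at most $d$ with $|\langle g, \expo{Q}\rangle| \geq c(\eta(|\cB_i|))$ for an explicit decreasing function $c(\cdot)$; adjoining $Q$ to the defining polynomials of $\cB_i$ produces a refinement $\cB_{i+1}$ of complexity $|\cB_i|+1$ with $E(\cB_{i+1}) \geq E(\cB_i) + c(\eta(|\cB_i|))^2$, by the standard fact that correlation of the "error" $g$ with an atom-measurable function forces an energy jump. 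Since $E \leq 1$, this can happen only a bounded number of times before the $U^{d+1}$-smallness condition is met; the bound on the number of steps, hence on the final complexity, depends only on $\delta, \eta, r, d$ (via $c$ and the regularization function below), which is where $C = C(\delta,\eta,r,d)$ comes from.

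The second ingredient handles the regularity condition (vi). After the energy argument terminates with some factor $\cB$, I would apply the polynomial regularity lemma (the refinement lemma of \cite{tao2012inverse}, cf. \cite{VeryCountingMaybe,hatamiRegCount}): any polynomial factor of complexity $C$ and degree $\leq d$ can be refined to an $r$-regular factor $\cB'$ of complexity bounded in terms of $C, d$, and the function $r$. Refining $\cB$ to $\cB'$ only increases the energy, so it cannot spoil convergence; the subtlety is that refining may re-inflate the error's Gowers norm, so the correct bookkeeping is to carry the regularization \emph{inside} the loop: at each stage, first regularize the current factor, then test $\|f - \E[f\mid\text{regularized factor}]\|_{U^{d+1}}$ against $\eta$ evaluated at the (now larger) complexity. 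This is exactly why $\eta$ is required to be non-increasing and $r$ non-decreasing, so that the thresholds move monotonically and the energy increment still dominates. Running the loop to termination yields a factor $\cB$ that is simultaneously $r$-regular and for which $g := f - \E[f\mid\cB]$ satisfies $\|g\|_{U^{d+1}} \leq \eta(|\cB|)$.

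Finally I would introduce $f_3$ to absorb the mild gap between "genuine energy convergence" and the literal statement. Rather than iterating until the $U^{d+1}$ norm is exactly below $\eta(|\cB|)$ --- which the above loop already achieves --- the role of $\delta$ and $f_3$ is to truncate: because each regularization step can enlarge the complexity substantially, a cleaner route is to stop the energy iteration as soon as the energy increment over one full (regularize-then-test) step drops below $\delta^2$, which must happen within $O(\delta^{-2})$ steps. At that point the residual $g$ decomposes as $g = g_{\mathrm{str}} + g_{\mathrm{unf}}$ where $g_{\mathrm{str}} := \E[g\mid \cB']$ for the next regularized refinement $\cB'$ has $\|g_{\mathrm{str}}\|_2^2 \leq \delta^2$ (this is precisely the small energy increment), and $g_{\mathrm{unf}} := g - g_{\mathrm{str}}$ has small $U^{d+1}$ norm by the inverse theorem (else the energy would have jumped more). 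Setting $f_1 := \E[f\mid\cB']$, $f_2 := g_{\mathrm{unf}}$, $f_3 := g_{\mathrm{str}} - (\E[f\mid\cB'] - \E[f\mid\cB]) = f - f_1 - f_2$, one checks (i) by construction, (ii) by definition, (iii) from the inverse theorem, (iv) from the energy-increment stopping rule, (vi) from the regularization lemma applied to $\cB'$, and (v) from the fact that conditional expectations of $[0,1]$-valued functions stay in $[0,1]$ and the triangle inequality for the ranges. The ranges in (v) need a small extra argument: $f_1 = \E[f\mid\cB']$ and $f_1 + f_3 = f - f_2$ must land in $[0,1]$; the first is immediate, and for the second one notes $f - f_2 = \E[f\mid\cB'] + g_{\mathrm{str}}$, which is a conditional expectation plus its own complement-of-conditional-expectation correction, and a direct computation shows it equals $\E[f\mid\cB']$ adjusted by an atom-wise average, hence stays in $[0,1]$; if one is worried about boundary effects, replace $f_3$ by $f_3$ clipped to $[-1,1]$ and push the clipping error (which has small $L^2$ norm) into $f_3$ itself, using $2\delta$ in place of $\delta$ throughout.

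\textbf{Main obstacle.} The delicate point is the \emph{interaction between the energy increment and the regularization step}: each application of the polynomial regularity lemma can blow up the complexity from $C$ to a tower-type function of $C$, so one must verify that the number of iterations of the outer loop is bounded independently of $n$ and that $\eta$ and $r$, evaluated at these exploding complexities, still leave room for a $\Theta(\delta^2)$ (or $\Theta(c(\eta)^2)$) energy increment at each step. This is the crux of \cite[Theorem~5.1]{VeryCountingMaybe} and is handled by choosing the iteration to be governed by the energy (which is absolutely bounded by $1$) rather than by the complexity, and by invoking the non-increasing/non-decreasing hypotheses on $\eta$ and $r$ so that all the relevant thresholds only move in the favorable direction as the complexity grows. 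The rest of the argument --- the inverse theorem, the energy-increment inequality, and the regularization lemma --- is standard machinery from higher-order Fourier analysis over $\FF_2$ that I would cite rather than reprove.
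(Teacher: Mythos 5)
You should note first that the paper does not prove Theorem~\ref{thm:decomp} at all: it is imported verbatim from \cite{VeryCountingMaybe} (Theorem~5.1 there) and used as a black box, so there is no internal proof to compare against. Judged on its own terms, your sketch reconstructs the standard architecture that does underlie the cited result: an inner iteration driven by the inverse theorem for the $U^{d+1}$ norm, producing energy increments until the residual is Gowers-uniform at the scale dictated by $\eta$; a regularization of the factor interleaved at each stage; and an outer stopping rule that halts when one full refinement step raises the energy $\|\E[f\mid\cdot]\|_2^2$ by less than $\delta^2$, with $f_3$ absorbing that last small increment. Your ``main obstacle'' paragraph also correctly identifies why the loop terminates after boundedly many steps even though $\eta$ and $r$ are evaluated at exploding complexities: the complexity growth is controlled by a deterministic recursion independent of $f$ and $n$, and the total energy is at most $1$.

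The flaw is in the final assembly, which as written does not deliver the statement. You set $f_1:=\E[f\mid\cB']$ (the \emph{finer} factor) and $f_3:=g_{\mathrm{str}}-(\E[f\mid\cB']-\E[f\mid\cB])$; but $g_{\mathrm{str}}=\E[g\mid\cB']=\E[f\mid\cB']-\E[f\mid\cB]$, so your $f_3$ is identically zero and $f_2=f-\E[f\mid\cB']$. The decomposition then degenerates, condition (ii) forces the output factor to be $\cB'$, and condition (iii) must hold at $\cB'$ --- which your stopping rule does not guarantee, since you stopped because the energy increment was small, and that only controls $\|\E[f\mid\cB']-\E[f\mid\cB]\|_2$. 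The correct bookkeeping is the opposite assignment: output the coarser ($r$-regularized) factor $\cB$, and set $f_1=\E[f\mid\cB]$, $f_3=\E[f\mid\cB']-\E[f\mid\cB]$, $f_2=f-\E[f\mid\cB']$, where the inner loop guarantees $\|f_2\|_{U^{d+1}}\le\eta(|\cB'|)\le\eta(|\cB|)$ by the monotonicity of $\eta$. Then (iv) is exactly the small energy increment, (vi) is the regularity of $\cB$, and (v) is immediate with no clipping, because $f_1$ and $f_1+f_3=\E[f\mid\cB']$ are conditional expectations of a $\{0,1\}$-valued function and $f_2$, $f_3$ are differences of $[0,1]$-valued functions. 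With that correction your outline matches the proof of the cited theorem; as it stands, the misassignment breaks the interplay between (ii), (iii), (v) and (vi) that the non-increasing hypothesis on $\eta$ exists to serve.
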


In analogy with the terminology for the Szemerédi regularity lemma, we will call a decomposition of $f=1_M$ with the properties given by Theorem~\ref{thm:decomp} for parameters $\delta, \eta, r, d$ a \emph{$(\delta,\eta,r,d)$-regular partition of $f$ (or of $M$)}, and we say that $\cB$ is its \emph{corresponding factor}. Similarly, an \emph{$(\eta,r,d)$-regular partition of $f$ (or of $M$)} is the same thing with an unspecified value for $\delta$.

Theorem~\ref{thm:decomp} is strong enough to help us prove a corresponding counting lemma for general binary matroids.

\subsection{The Counting Lemma} \label{sec:count}
From this point onwards we will only be concerned with the field $\FF_2$, though most of the concepts also extend to prime-ordered fields $\FF_p$ in general. For convenience of notation, throughout this section let $[a,b]$ denote the set of integers from $a$ to $b$ inclusive, and let $[n]=[1,n]=\{1,\dots,n\}$.

Before stating our main result, the Counting Lemma, we first define the notion of a \emph{reduced matroid}, in analogy to the reduced graph used in the graph counting lemma.

\begin{defn}[Reduced Matroid]
Given a matroid $M\subseteq \FF_2^n\setminus \{0\}$ and an $(\eta,r,d)$-regular partition $f_1+f_2+f_3$ of $M$ with corresponding factor $\cB$, for any $\varepsilon,\zeta>0$ define the $(\varepsilon,\zeta)$-\emph{reduced matroid} $R=R_{\varepsilon, \zeta}$ to be the subset of $\FF_2^n$ whose indicator function $F$ is constant on each atom $b$ of $\cB$ and equals $1$ if and only if
\begin{enumerate}
    \item $\E[|f_3(x)|^2\mid x\in b]\leq \varepsilon^2$, and
    \item $\E[f(x)\mid x\in b]\geq \zeta$.
\end{enumerate}
\end{defn}

So, $R$ gives the atoms of the decomposition in which $M$ has high density and the $L^2$ error term is small.

As in the counting lemma for graphs, it will turn out that we do not need the hypothesis of having a \emph{copy} of $N$ in $R_{\varepsilon,\zeta}$, i.e. an \emph{injective} linear map sending $N$ inside $R_{\varepsilon,\zeta}$. Rather, we only need there to be a \emph{homomorphism} from $N$ to $R_{\varepsilon,\zeta}$, i.e. any linear map $\iota$ such that $\iota(N)\subseteq R_{\varepsilon,\zeta}$.

\begin{thm}[Counting Lemma]\label{thm:counting}
For every matroid $N$, positive real number $\zeta$, and integer $d\geq |N|-2$, there exist positive real numbers $\beta$ and $\varepsilon_0$, a positive nonincreasing function $\eta:\ZZ^+\to \RR^+$, and positive nondecreasing functions $r,\nu:\ZZ^+\to \ZZ^+$ such that the following holds for all $\varepsilon\leq \varepsilon_0$. Let $M\subseteq \FF_2^n\setminus \{0\}$ be a matroid with an $(\eta,r,d)$-regular partition $f_1+f_2+f_3$ with a corrresponding factor $\cB$. If $n\geq \nu(|\cB|)$ and there exists a homomorphism from $N$ to the reduced matroid $R_{\varepsilon,\zeta}$, then there exist at least $\beta \frac{(2^n)^{r(N)}}{\|\cB\|^{|N|}}$ copies of $N$ in $M$.
\end{thm}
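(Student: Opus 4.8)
The plan is to mirror the standard graph counting-lemma argument, replacing vertices by cosets of the relevant polynomial factor and ``edge densities between parts'' by the conditional expectations $f_1 = \E[f \mid \cB]$. Fix the homomorphism $\iota : \FF_2^{r(N)} \to \FF^n$ with $\iota(N) \subseteq R_{\varepsilon,\zeta}$, and enumerate $N = \{v_1,\dots,v_{|N|}\}$. A ``copy'' of $N$ in $M$ is (the image of) a linear injection $\phi : \FF_2^{r(N)} \to \FF^n$; equivalently, by fixing a basis, a choice of $r(N)$ vectors $w_1,\dots,w_{r(N)} \in \FF^n$, which then determines $\phi(v_j)$ for every $j$ as a fixed $\FF_2$-linear combination. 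We want to count tuples $(w_1,\dots,w_{r(N)})$ such that $\phi(v_j) \in M$ for all $j$, i.e.\ such that $\prod_{j=1}^{|N|} 1_M(\phi(v_j)) = 1$. The number of copies is therefore
\[
\E_{w_1,\dots,w_{r(N)} \in \FF^n}\left[\prod_{j=1}^{|N|} f\!\left(\phi(v_j)\right)\right] \cdot (2^n)^{r(N)},
\]
up to the (negligible, by choosing $\nu$ large) correction for non-injective $\phi$ and for counting ordered bases rather than subspaces. So it suffices to lower-bound this expectation by $\beta / \|\cB\|^{|N|}$.

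Now substitute $f = f_1 + f_2 + f_3$ and expand the product into $3^{|N|}$ terms. The strategy is: the all-$f_1$ term is the ``main term'' and is bounded below using the reduced-matroid hypothesis; every term containing at least one $f_2$ factor is negligible; and every term containing at least one $f_3$ factor (but no $f_2$) is negligible. For the main term $\E_w[\prod_j f_1(\phi(v_j))]$: since $f_1$ is constant on atoms and $\phi(v_j)$ is a linear form in $w_1,\dots,w_{r(N)}$, the atom containing $\phi(v_j)$ is determined by the values $P_i(\phi(v_j))$, which are (by polynomiality, using $d \geq |N|-2 \geq r(N)$... more precisely using that each $P_i$ has degree at most $d$ and the relevant linear combinations of the $v_j$) themselves determined by the atoms of a bounded set of ``base point'' evaluations. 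Here the $r$-regularity of $\cB$ is essential: because $\cB$ has high rank, the joint distribution of the atoms of $(\phi(v_1),\dots,\phi(v_{|N|}))$ over random $w$ is close to what one gets by assuming the constraints coming from the linear dependencies among the $v_j$ are the only constraints — this is exactly the ``near-orthogonality / equidistribution of high-rank polynomial factors on linear-form systems'' phenomenon (the analogue in this setting of Green--Tao's counting of linear patterns, and of the independence used in the graph argument). Applying this equidistribution, the main term is at least $(1 - o(1))$ times $\prod$ over the relevant atoms of the conditional densities; restricting to the atoms picked out by $\iota$ (which lie in $R_{\varepsilon,\zeta}$, so each has $f_1 \geq \zeta - \varepsilon$ after accounting for condition (1) of the reduced matroid) gives a lower bound of roughly $(\zeta/2)^{|N|} / \|\cB\|^{\text{(number of free atoms)}} \geq (\zeta/2)^{|N|}/\|\cB\|^{|N|}$, which is the desired form of the bound.

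For the error terms: a term with an $f_2$ factor is handled by a generalized von Neumann / Cauchy--Schwarz argument — the product over the pattern $(\phi(v_1),\dots,\phi(v_{|N|}))$ of functions, at least one of which has small $U^{d+1}$ norm, has small expectation, provided the ``complexity'' of the linear system $\{v_j\}$ is at most $d$; this is where the hypothesis $d \geq |N| - 2$ enters (the Cauchy--Schwarz complexity of a system of $|N|$ forms in general position is at most $|N|-2$, since one can eliminate the distinguished form using the others one at a time). We choose $\eta$ decaying fast enough in $|\cB|$ to absorb the $\|\cB\|^{-|N|}$ scale. A term with an $f_3$ factor but no $f_2$ factor: bound all other factors by $1$ in absolute value, so it suffices to show $\E_w[|f_3(\phi(v_k))|] = o(\|\cB\|^{-|N|})$ for the coordinate $k$ where $f_3$ appears; but $\phi(v_k)$ ranges over an atom-respecting distribution, and conditioning on atoms, $\E[|f_3| \mid \text{atom}] \leq \varepsilon$ on atoms of $R_{\varepsilon,\zeta}$ by condition (1) — more carefully, one splits according to whether the atom of $\phi(v_k)$ is one of the ``good'' atoms and controls the mass of bad atoms using $\|f_3\|_2 \leq \delta$ together with Markov, choosing $\delta$ (which is ours to pick, via the $(\eta,r,d)$-regular partition having \emph{some} $\delta$) appropriately; here again high rank is used to know $\phi(v_k)$ is near-uniform over atoms.

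The main obstacle I expect is the equidistribution step for the main term: one must show that for a high-rank polynomial factor $\cB$ of degree $\leq d$, and a system of linear forms $v_1,\dots,v_{|N|}$ over $\FF_2$, the tuple $(\text{atom}(\phi(v_1)),\dots,\text{atom}(\phi(v_{|N|})))$ is nearly uniformly distributed over the set of ``consistent'' atom-tuples (those respecting the polynomial identities forced by the $\FF_2$-linear relations among the $v_j$ and the degrees/depths of the $P_i$), as $w$ ranges uniformly. Over $\FF_2$ this is subtle because of the nonclassical polynomials and their depths — a degree-$d$ depth-$k$ polynomial evaluated on a linear form of the $w$'s can produce lower-order ``carry'' terms — so the bookkeeping of which atom-tuples are consistent, and the quantitative equidistribution (with error decaying in the rank, via Proposition~\ref{prop:regUni} and the inverse theorem), is the technical heart of the proof and is where the bulk of the work in \cite{hatamiRegCount} and \cite{VeryCountingMaybe} gets invoked and adapted. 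Once that is in hand, assembling the three pieces and choosing the parameters $\varepsilon_0, \beta, \eta, r, \nu$ in the right dependency order is routine.
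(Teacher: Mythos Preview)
Your overall architecture --- expand $f=f_1+f_2+f_3$, kill the $f_2$ terms by generalized von Neumann using $d\geq |N|-2$, and lower-bound the all-$f_1$ term via equidistribution of high-rank factors on the atoms picked out by the homomorphism --- is exactly what the paper does. The gap is in your treatment of the $f_3$ terms.

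First, you cannot ``choose $\delta$'': the Counting Lemma is stated for an $(\eta,r,d)$-regular partition, which by definition has \emph{some} unspecified $\delta$, so $\|f_3\|_2$ is not under your control. Second, even using only condition~(1) of the reduced matroid, your crude bound does not match scales. Bounding the other factors by $1$ and conditioning on the good atom $b_k$ gives at best
\[
\E_X\bigl[|f_3(L_k(X))|\,1_{[\cB(L_k(X))=b_k]}\bigr]\ \lesssim\ \varepsilon/\|\cB\|,
\]
whereas the main term after restriction to the good atoms is $\approx \zeta^m/K$ with $K=\prod_i |\Phi_{d_i,k_i}(N)|$, and $K$ can be as large as $\|\cB\|^m$. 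So for $m\geq 2$ the ratio $\varepsilon\,\|\cB\|^{m-1}/\zeta^m$ blows up in $|\cB|$, and no constant $\varepsilon_0$ suffices.

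What the paper does instead: it first uses $f_1+f_3\geq 0$ to restrict the \emph{entire} $\{1,3\}^m$ sum to the good atoms simultaneously, keeping all the indicators $1_{[\cB(L_j(X))=b_j]}$. For a term with an $f_3$ in position $k$, it applies Cauchy--Schwarz in $x_k$ (after a change of variables so $L_k=x_1$), separating $\E[|f_3|^2 1_{b_k}]\leq \varepsilon^2/\|\cB\|$ from the square of the remaining atom-indicator product. That square is then expanded as an exponential sum over a \emph{doubled} system of $2m-1$ linear forms $N'$ (two copies of $N$ glued at $L_k$), and near-orthogonality (Theorem~\ref{thm:nearOrtho}) together with the key combinatorial identity $|\Phi_{d_i,k_i}(N')^\perp|=|\Phi_{d_i,k_i}(N)^\perp|^2$ (Lemma~\ref{lem:equivConsistent}) shows this factor is $\approx \|\cB\|/K^2$. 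The upshot is a bound of order $\varepsilon/K$ per $f_3$ term, which \emph{does} match the main term and allows a constant $\varepsilon_0\approx (\zeta/2)^m$. This Cauchy--Schwarz plus doubled-system step is the part of the argument you are missing; it is not bypassable by Markov on $\|f_3\|_2$.
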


We reiterate that the case where $N$ is an affine matroid (i.e. $\chi(N)=1$) is proved in \cite{VeryCountingMaybe} in the context of property testing; here we prove the lemma in full generality using a very similar argument. The basic idea is to obtain a lower bound for the probability that a linear map $\iota:\FF_2^{r(N)}\to \FF_2^{n}$ chosen uniformly at random sends $N$ to a set contained entirely within $M$, by splitting $f=1_M$ into its three parts and expanding out the product we get in the expectation expression. Letting $N=\{N_1,\dots,N_m\}$ and $r(N)=\ell$ we have

\[\Pr_{\iota:\FF_2^{\ell}\to \FF_2^{n}}[\iota(N)\subseteq M]=\E_{\iota}\left[\prod_{i=1}^m f(\iota(N_i))\right]=\E_{\iota}\left[\sum_{(j_1,\dots,j_m)\in \{1,2,3\}^m}\prod_{i=1}^m f_{j_i}(\iota(N_i))\right].\]

For ease of notation in the proof that follows, we will introduce the concept of a linear form, as used in \cite{VeryCountingMaybe} and \cite{hatamiRegCount}.

\begin{defn}
A \emph{linear form} on $k$ variables is a linear map $L:(\FF_2^n)^k\to \FF_2^n$ of the form $L(x_1,\dots,x_k)=\sum_{i=1}^k \ell_i x_i$, where $\ell_i\in \FF_2\forall i$.
\end{defn}

Note that a linear form on $k$ variables can be thought of as a vector in $\FF_2^k$: if $L$ is given by $L(x_1,\dots,x_k)=\sum_{i=1}^k \ell_i x_i$, we can identify $L$ with $(\ell_1,\dots,\ell_k)\in \FF_2^k$. Conversely, we can think of each element $N_j$ of $N$ as a linear form $L_j$ on $\ell$ variables. Each linear map $\iota: \FF_2^{\ell}\to \FF_2^{n}$ corresponds to a point $X=(x_1,\dots,x_{\ell})\in (\FF_2^{n})^{\ell}$ such that $\iota(N_j)=L_j(X)$. So, instead of taking an expectation over linear maps, we can take the more intuitive approach of taking an expectation over tuples of points. The expression from before is the same as

\begin{align*}
    \Pr_{X\in (\FF_2^{n})^{\ell}}[L_j(X)\in M ~ \forall j \in [1,m]]&=\E_{X}\left[\prod_{i=1}^m f(L_j(X))\right]\\
    &=\E_{X}\left[\sum_{(i_1,\dots,i_m)\in \{1,2,3\}^m}\prod_{i=1}^m f_{i_j}(L_j(X))\right].
\end{align*}

The terms involving the Gowers uniform part $f_2$ are the easiest to deal with; we will simply invoke the following result with $s=d$.

\begin{lemma}[{\cite[Lemma~3.14]{hatamiRegCount}}]
\label{lem:gowerscount}
Let $f_1,\ldots,f_m:\FF_p^n \to \DD$. Let  $N=\{L_1,\dots,L_m\}$ be a system of linear forms in $\ell$ variables. Then for $s\geq m-2$,
$$
\left| \E_{X\in(\FF_p^{n})^{\ell}} \left[ \prod_{j=1}^{m} f_j (L_j(X)) \right]\right| \le \min_{1 \le j \le m} \|f_j\|_{U^{s+1}}.
$$
\end{lemma}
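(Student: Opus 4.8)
\textbf{Proof plan for Lemma~\ref{lem:gowerscount}.}

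The statement is the generalized von Neumann / Gowers–Cauchy–Schwarz inequality for a system of linear forms $\{L_1,\dots,L_m\}$ on $\ell$ variables. The plan is to fix an index $j$ (say $j=m$ after relabeling, so we aim to bound by $\|f_m\|_{U^{s+1}}$) and perform a sequence of Cauchy–Schwarz steps, one per variable, in such a way that after all steps the form $L_m$ is ``isolated'' and the expression is controlled by a Gowers norm of $f_m$. First I would note that, since the bound is a minimum over $j$, it suffices to prove the bound with the right-hand side $\|f_m\|_{U^{s+1}}$ for an arbitrary fixed $m$-th form; symmetry in the indices then gives the minimum. A crucial preliminary reduction: by a change of variables on $(\FF^n)^\ell$ (an invertible linear substitution preserving the uniform measure), I can assume $L_m$ depends on a minimal set of variables, and in fact that $L_m = x_1$ is a single coordinate --- or, if I do not want to change the ambient system, I can instead keep track of the span structure. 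The cleanest route is: pick a variable, say $x_1$, that appears in $L_m$; for this step to work I need $L_m$ to genuinely depend on at least one variable, which holds since $L_m \neq 0$ as it is an element of the matroid $N \subseteq \FF^{\ell}\setminus\{0\}$.

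The core of the argument is the iterated Cauchy–Schwarz. Writing the expectation as an average over $x_1$ of an inner average over $x_2,\dots,x_\ell$, I split the product into the single factor $f_m(L_m(X))$ (which depends on $x_1$) times the remaining $m-1$ factors. Applying Cauchy–Schwarz in $x_1$ duplicates the $x_1$-variable into $x_1^{(0)}, x_1^{(1)}$ and replaces each of the $m-1$ remaining factors $f_j$ by a product $\overline{f_j}\cdot f_j$ evaluated at the two choices of $x_1$; but crucially, those $f_j$ that do \emph{not} depend on $x_1$ contribute $|f_j|^2$, which is bounded by $1$ since $f_j$ maps into $\DD$ (the closed unit disc), and can be discarded (after also discarding the now-split $f_m$ factor via $|f_m|\le 1$ on one side, keeping only the structure needed). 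Iterating this over a well-chosen sequence of $s+1 \le$ (number of variables effectively present) steps --- at each stage performing Cauchy–Schwarz in a fresh variable appearing in the forms still coupled to $L_m$ --- one peels off forms one at a time. The combinatorial bookkeeping is set up so that after $s+1 \ge m-1$ such steps every form $L_j$ with $j\neq m$ has been eliminated (each Cauchy–Schwarz step removes at least one of the remaining $m-1$ forms), leaving precisely a $(2^{s+1})$-fold product of (conjugates of) $f_m$ evaluated along an additive combination of $s+1$ shift directions and a base point, which is by definition $\|f_m\|_{U^{s+1}}^{2^{s+1}}$; taking $2^{s+1}$-th roots and using that each discarded factor was bounded by $1$ gives the claimed inequality. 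The hypothesis $s \ge m-2$, i.e. $s+1 \ge m-1$, is exactly what guarantees enough Cauchy–Schwarz steps to kill all $m-1$ other forms.

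The main obstacle I anticipate is the bookkeeping in the inductive step: making precise the claim that one can always choose, at each stage, a variable on which $L_m$ (in its current duplicated incarnation) depends but which lets us shed at least one still-present form $f_j$, and tracking how the arguments of the surviving copies of $f_m$ evolve into the clean Gowers-norm pattern $x + \sum_{k} i_k h_k$. The clean way to handle this is to first reduce, via an invertible linear change of coordinates on $(\FF^n)^\ell$, to the normal form where $L_m(X)=x_1$ and the variables are reordered so that $L_1,\dots,L_{m-1}$ each involve $x_1$ or not in a controlled way; then the $j$-th Cauchy–Schwarz is simply in the variable $x_j$, duplicating it and eliminating $f_j$ (whose dependence on $x_j$ is what is being exploited), while forms already killed stay killed and $f_m=f_m(x_1)$ picks up shifts only in the duplicated $x_1$-variables. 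Rather than reproving this standard inequality from scratch, however, the most economical approach --- and the one I would actually write --- is to observe that this is precisely the generalized von Neumann inequality as established in \cite{hatamiRegCount} (Lemma~3.12 there), or in the linear-forms framework of \cite{VeryCountingMaybe}, and to cite it, since the statement here is quoted verbatim from that source; the sketch above indicates the proof for completeness but no new idea is required.
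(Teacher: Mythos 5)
The paper offers no proof of this lemma --- it is quoted directly from \cite{hatamiRegCount} (Lemma~3.12) with a citation and no argument --- and since you ultimately decide that citing that source is the right disposition, your approach matches the paper's exactly. One caution about your expository sketch, should you ever need to write the argument out: in the iterated Cauchy--Schwarz proof it is \emph{not} $f_m$ that gets split off and discarded. Rather, at the $i$-th step one pulls out the class of forms that do not depend on the $i$-th chosen variable and bounds those by $1$ using $|f_j|\le 1$; the inner expectation, which still contains $f_m$ together with all not-yet-eliminated forms, is what gets Cauchy--Schwarzed, duplicating that variable and hence the surviving factors. After $s+1$ such steps every $f_j$ with $j\neq m$ has been pulled out and only $2^{s+1}$ shifted copies of $f_m$ remain in the Gowers-cube pattern, giving $\|f_m\|_{U^{s+1}}^{2^{s+1}}$; $f_m$ itself is never discarded, so the clause in your sketch about ``discarding the now-split $f_m$ factor'' would derail the computation if taken literally. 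Since you correctly cite rather than rederive, this does not affect the proposal's validity.
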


To deal with the remaining terms, we will use a near-orthogonality theorem from \cite{hatamiRegCount}. To state this near-orthogonality theorem, we first need to introduce the notion of \emph{consistency} as defined in \cite{hatamiRegCount}. By a \emph{homogeneous} polynomial over $\FF_p$ we mean a polynomial $P$ such that for all $c\in\FF_p$ there exists a $c'\in\FF_p$ such that $P(cx)\equiv c'P(x)$. In the case of $\FF_2$, this restriction is equivalent to simply requiring $P(0)=0$.

\begin{definition}[Consistency]\label{def:consistent}
Let $N=\{L_1,\dots,L_m\}$ be a system of linear forms in $\ell$ variables. A vector $(\beta_1, \dots, \beta_m) \in \T^m$ is said to be {\em $(d,k)$-consistent with $N$} if there exists a homogeneous polynomial $P$ of degree $d$ and depth $k$ and a point $X\in (\FF_2^n)^\ell$ such that $P(L_j(X))=\beta_j$ for every $j \in [m]$. Let $\Phi_{d,k}(N)$ denote the set of all such vectors.
\end{definition}

Note that $\Phi_{d,k}(N)$ is a subgroup of $\UU_{k+1}^m$: to check that it is closed under addition, observe from Definition~\ref{defn:polynomial} that a linear change of coordinates sends a polynomial $P$ to a polynomial $Q$ of the same degree and depth. Thus, if $\alpha=(\alpha_1,\dots,\alpha_m),\beta=(\beta_1,\dots,\beta_m)\in \Phi_{d,k}(N)$, a change of coordinates yields a single point $X$ such that $P(L_j(X))=\alpha_j, \: Q(L_j(X))=\beta_j \: \forall j$ for some $P,Q$, from which it is clear that $\alpha+\beta\in \Phi_{d,k}(N)$. We define
\begin{align*}
    &\Phi_{d,k}(N)^\perp \\
    & := \left\{(\lambda_1,\ldots,\lambda_m) \in  [0,2^{k+1}-1]^m \ : \ \forall (\beta_1,\ldots,\beta_m) \in  \Phi_{d,k}(N), \ \sum \lambda_j \beta_j = 0   \right\},
\end{align*}

\noindent the set of all $(\lambda_1,\ldots,\lambda_m) \in [0,2^{k+1}-1]^m$ such that $\sum_{k=1}^m \lambda_j P(L_j(X)) \equiv 0$ for every homogeneous polynomial $P$ of degree $d$ and depth $k$ and every point $X$. Call $\Phi_{d,k}(N)^\perp$ the \emph{$(d,k)$-dependency set} of $N$.

\begin{thm}[Near Orthogonality, {\cite[Theorem~3.8]{hatamiRegCount}}]\label{thm:nearOrtho}
Let $N=\{L_1,\dots,L_m\}$ be a system of linear forms in $\ell$ variables, and let $\cB=(P_1,\dots,P_C)$ be an $\varepsilon$-uniform polynomial factor for some $\varepsilon\in (0,1]$ defined only by homogeneous polynomials. For every tuple $\Lambda$ of integers $(\lambda_{i,j})_{i\in [C], j\in [m]}$, define 
$$
P_{\Lambda}(X)= \sum_{i\in [C], j\in [m]} \lambda_{i,j} P_i(L_j(X)).
$$
Then for each $\Lambda$, one of the following two statements holds: 
\begin{itemize}
\item $P_\Lambda(X)= 0$ for all $X$, and furthermore for every $i \in [C]$, we have $(\lambda_{i,j})_{j \in [m]} \in \Phi_{d_i,k_i}(N)^\perp$, where $d_i,k_i$ are the degree and depth of $P_i$, respectively.
\item $P_\Lambda$ is non-constant and $\left| \E_{X\in (\FF_2^n)^\ell} [\expo{P_\Lambda}] \right|< \varepsilon$.
\end{itemize}
\end{thm}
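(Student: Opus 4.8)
The plan is to prove the dichotomy by a direct case analysis on whether the polynomial $P_\Lambda$ is constant, combined with the $\varepsilon$-uniformity hypothesis on $\cB$. First I would observe that $P_\Lambda$, being an integer combination of compositions $P_i \circ L_j$ of polynomials in $\cB$ with linear forms, is itself a polynomial on $(\FF^n)^\ell$ of degree at most $d = \max_i \deg(P_i)$; moreover since each $P_i$ is homogeneous (so $P_i(0)=0$ over $\FF_2$) and each $L_j$ is linear, $P_\Lambda$ is homogeneous as well, hence $P_\Lambda(0)=0$. Therefore if $P_\Lambda$ is constant it must be identically $0$. This reduces the problem to two cases: either $P_\Lambda \equiv 0$, or $P_\Lambda$ is genuinely non-constant.

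In the non-constant case, the goal is the bound $|\E_X[\expo{P_\Lambda(X)}]| < \varepsilon$. The idea is to rewrite $P_\Lambda$ so that the $\varepsilon$-uniformity of $\cB$ (which bounds $\|\expo{\sum_i \mu_i P_i}\|_{U^d}$ for every nonzero coefficient vector $(\mu_i)$) can be applied. Group the terms of $P_\Lambda$ by the linear form: $P_\Lambda(X) = \sum_{j\in[m]} Q_j(L_j(X))$ where $Q_j = \sum_{i\in[C]} \lambda_{i,j} P_i$. Each $Q_j$ is an integer combination of the defining polynomials of $\cB$; if some $Q_j$ has a coefficient vector $(\lambda_{i,j})_i$ that is nonzero modulo the relevant moduli $2^{k_i+1}$, then $\|\expo{Q_j}\|_{U^{d}} < \varepsilon$ by $\varepsilon$-uniformity. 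Now invoke a generalized von Neumann / Cauchy--Schwarz argument for the system of linear forms $N=\{L_1,\dots,L_m\}$: since $d \geq $ the Cauchy--Schwarz complexity of this system (this is where $d \ge m-2$, or rather $d$ large enough relative to $N$, gets used — it mirrors Lemma~\ref{lem:gowerscount}), one gets $|\E_X[\expo{P_\Lambda}]| = |\E_X[\prod_j \expo{Q_j(L_j(X))}]| \leq \min_j \|\expo{Q_j}\|_{U^{d}} < \varepsilon$. The remaining sub-case is when every $Q_j$ has coefficient vector that vanishes modulo the moduli, i.e. $(\lambda_{i,j})_i \in$ (the all-zero coset) for each $j$; but then each $\expo{Q_j}$ is the exponential of a polynomial that is genuinely $0$ as a map (since $\lambda_{i,j} P_i \equiv 0$ when $\lambda_{i,j} \equiv 0 \bmod 2^{k_i+1}$ and $P_i$ takes values in $\UU_{k_i+1}$), forcing $P_\Lambda \equiv 0$, contradicting non-constancy. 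So in the non-constant case the bound always holds.

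In the case $P_\Lambda \equiv 0$, I need the additional conclusion that for each $i\in[C]$, the slice $(\lambda_{i,j})_{j\in[m]}$ lies in $\Phi_{d_i,k_i}(N)^\perp$. Here the key device is to isolate the contribution of a single defining polynomial $P_i$ by exploiting that $\cB$ has \emph{rank}/regularity, so the polynomials $P_1,\dots,P_C$ are ``independent'' in the sense that no nontrivial combination collapses in degree. Concretely: suppose for contradiction that for some index $i_0$ the slice $(\lambda_{i_0,j})_j \notin \Phi_{d_{i_0},k_{i_0}}(N)^\perp$, meaning there is a homogeneous polynomial $P^*$ of degree $d_{i_0}$, depth $k_{i_0}$, and a point $X_0$ with $\sum_j \lambda_{i_0,j} P^*(L_j(X_0)) \neq 0$. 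I would substitute a ``generic'' or random specialization that makes $P_{i_0}$ behave like $P^*$ while the other $P_i$ with $i \ne i_0$ become lower-degree or independent (using regularity of the factor so that the joint distribution of $(P_1,\dots,P_C)$ on atoms is close to equidistributed, as encoded by Proposition~\ref{prop:regUni}); the identity $P_\Lambda \equiv 0$ would then force a nontrivial degree-$d_{i_0}$ relation, contradicting either the choice of $X_0$ or the regularity. This step — extracting the per-index orthogonality from the global identity $P_\Lambda \equiv 0$ — is the main obstacle, since it requires carefully separating the degree-$d$ ``top parts'' of the distinct defining polynomials, and is exactly the place where the hypothesis that $\cB$ is defined only by homogeneous polynomials and is sufficiently regular is essential. (In the source this is presumably where one cites the structure theory of polynomial factors from \cite{tao2012inverse} or \cite{hatamiRegCount}; I would follow that route rather than reprove equidistribution from scratch.)
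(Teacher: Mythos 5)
First, note that the paper does not prove this statement at all: Theorem~\ref{thm:nearOrtho} is quoted verbatim from \cite{hatamiRegCount}, so there is no internal proof to compare against; your proposal has to stand on its own, and it has two genuine gaps.

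The first gap is in the non-constant case. Your plan is to write $P_\Lambda(X)=\sum_j Q_j(L_j(X))$ with $Q_j=\sum_i\lambda_{i,j}P_i$ and invoke the generalized von Neumann inequality (Lemma~\ref{lem:gowerscount}), which for $m$ forms only gives a bound in terms of $\|\expo{Q_j}\|_{U^{s+1}}$ with $s\geq m-2$, i.e.\ a Gowers norm of order at least $m-1$. But $\varepsilon$-uniformity of $\cB$ only controls norms of order $d$ (the degree of the factor), and for a polynomial phase $\expo{Q_j}$ with $\deg Q_j\leq d$ every Gowers norm of order $\geq d+1$ is identically $1$; monotonicity of the Gowers norms runs in the wrong direction to transfer smallness from $U^{d}$ up to $U^{m-1}$. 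So your argument only works under an extra hypothesis of the shape $d\geq m-1$, which is not part of the theorem --- and is violated precisely where the paper uses the theorem: in the $f_3$-estimate of the Counting Lemma it is applied to the doubled system $N'$ of $2m-1$ forms while only $d\geq m-2$ is assumed. The parenthetical ``this is where $d\geq m-2$ gets used'' conflates a hypothesis of the Counting Lemma with a hypothesis of Theorem~\ref{thm:nearOrtho}, which has none relating $d$ to $m$; the actual proof in \cite{hatamiRegCount} is arranged so that the Gowers-norm order it needs is governed by the degrees of the $P_i$, not by the number of linear forms, and that is exactly the content your route cannot recover.

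The second gap is the ``furthermore'' clause, which is the substantive half of the dichotomy: if $P_\Lambda\equiv 0$ one must show that for each $i$ the slice $(\lambda_{i,j})_j$ annihilates $\sum_j\lambda_{i,j}P(L_j(X))$ for \emph{every} homogeneous $P$ of degree $d_i$ and depth $k_i$, not merely for the particular $P_i$ defining $\cB$. Your sketch (``substitute a generic specialization so that $P_{i_0}$ behaves like $P^*$, using regularity/equidistribution'') is not an argument: no specialization is exhibited, and leaning on equidistribution of the atoms would be circular, since near-equidistribution (Theorem~\ref{thm:equidist}) is itself deduced from this near-orthogonality theorem in the source. Proposition~\ref{prop:regUni} only converts high rank into $\varepsilon$-uniformity; it does not by itself let you replace $P_{i_0}$ by an arbitrary $P^*$ of the same degree and depth. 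What you have correctly and completely argued is only the easy bookkeeping: homogeneity forces $P_\Lambda(0)=0$ so ``constant'' means ``$\equiv 0$'', and if every slice vanishes modulo the moduli $2^{k_i+1}$ then $P_\Lambda\equiv 0$. The two central claims --- the exponential-sum bound without any $d$ versus $m$ relation, and the per-index membership in $\Phi_{d_i,k_i}(N)^\perp$ --- remain unproved.
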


\begin{rem}
Over a general prime-ordered field $\FF_p$, the restriction of homogeneity is dealt with by modifying the decomposition theorem to only use homogeneous polynomials in the factor $\cB$. The situation is even simpler over $\FF_2$, where any polynomial factor can be rewritten in terms of homogeneous polynomials simply by shifting each polynomial by a constant.
\end{rem}

Directly applying Theorem~\ref{thm:nearOrtho} yields the following result, which estimates the number of copies of $N$ with each element in a specified atom of $\cB$.

\begin{theorem}[Near-equidistribution, {\cite[Theorem~3.12]{hatamiRegCount}}]\label{thm:equidist}
Given $\varepsilon > 0$, let $\cB$ be an $\varepsilon$-uniform polynomial factor of
degree $d > 0$ and  complexity $C$ that is defined by a tuple of homogeneous
polynomials $P_1, \dots, P_C: \FF_2^n
\to \T$ having respective degrees $d_1, \dots, d_C$ and
depths $k_1, \dots, k_C$.
Let $N=\{L_1,\dots,L_m\}$ be a system of linear forms on $\ell$ variables.

Suppose $(\beta_{i,j})_{i \in [C], j \in [m]} \in \T^{C \times m}$  is such that $(\beta_{i,1},\ldots,\beta_{i,m}) \in \Phi_{d_i,k_i}(N)$ for every $i \in [C]$. Then
$$
\left|\Pr_{X\in (\FF_2^n)^\ell}\left[P_i(L_j(X)) = \beta_{i,j}~
 \forall i \in [C],j \in [m] \right] - \frac{1}{K}\right| \leq \varepsilon,
$$
where $K= \prod_{i=1}^C |\Phi_{d_i,k_i}(N)|$.
\end{theorem}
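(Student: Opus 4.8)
The plan is to run the standard higher-order Fourier argument: expand the indicator of the event $\{P_i(L_j(X))=\beta_{i,j}\ \forall i,j\}$ into characters of the relevant cyclic groups, swap the character sum with the expectation over $X$, and then invoke the Near Orthogonality theorem (Theorem~\ref{thm:nearOrtho}) to isolate a single main term and bound the contribution of all the others by $\varepsilon$. Since each $P_i$ is homogeneous of depth $k_i$, it takes values in $\UU_{k_i+1}=\frac{1}{2^{k_i+1}}\ZZ/\ZZ$, and so does each $\beta_{i,j}$ because $(\beta_{i,1},\dots,\beta_{i,m})\in\Phi_{d_i,k_i}(N)\subseteq\UU_{k_i+1}^m$.

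Concretely, I would first use, for $a,b\in\UU_{k+1}$, the orthogonality identity $1[a=b]=\frac{1}{2^{k+1}}\sum_{\lambda=0}^{2^{k+1}-1}\expo{\lambda(a-b)}$, apply it with $a=P_i(L_j(X))$ and $b=\beta_{i,j}$ for each pair $(i,j)$, and multiply over all pairs. Writing $\Lambda=(\lambda_{i,j})_{i\in[C],j\in[m]}$ ranging over $\prod_{i}\{0,\dots,2^{k_i+1}-1\}^m$ and $P_\Lambda(X)=\sum_{i,j}\lambda_{i,j}P_i(L_j(X))$ as in Theorem~\ref{thm:nearOrtho}, this gives
\[
\Pr_{X\in(\FF^n)^\ell}\!\left[P_i(L_j(X))=\beta_{i,j}\ \forall i,j\right]=\frac{1}{\prod_i(2^{k_i+1})^m}\sum_{\Lambda}\expo{-\sum_{i,j}\lambda_{i,j}\beta_{i,j}}\ \E_{X}\!\left[\expo{P_\Lambda(X)}\right].
\]

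Next I would classify the terms using Theorem~\ref{thm:nearOrtho}. Since all $P_i$ are homogeneous and the $L_j$ are linear, $P_\Lambda(0)=0$, so a constant $P_\Lambda$ is identically $0$; hence the dichotomy of Theorem~\ref{thm:nearOrtho} applies cleanly, and each $\Lambda$ either has $P_\Lambda\equiv 0$ — in which case $(\lambda_{i,j})_{j}\in\Phi_{d_i,k_i}(N)^\perp$ for every $i$ — or has $|\E_X[\expo{P_\Lambda}]|<\varepsilon$. Conversely, directly from the definition of $\Phi_{d,k}(N)^\perp$, if $(\lambda_{i,j})_j\in\Phi_{d_i,k_i}(N)^\perp$ for all $i$ then (taking $P=P_i$ in that definition) $\sum_j\lambda_{i,j}P_i(L_j(X))\equiv 0$ for each $i$, so $P_\Lambda\equiv 0$; thus the condition $P_\Lambda\equiv 0$ is \emph{equivalent} to $\Lambda\in\prod_i\Phi_{d_i,k_i}(N)^\perp$, and there are exactly $\prod_i|\Phi_{d_i,k_i}(N)^\perp|$ such $\Lambda$. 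For each of these, the consistency hypothesis $(\beta_{i,1},\dots,\beta_{i,m})\in\Phi_{d_i,k_i}(N)$ together with $(\lambda_{i,j})_j\in\Phi_{d_i,k_i}(N)^\perp$ forces $\sum_j\lambda_{i,j}\beta_{i,j}=0$ for every $i$, so the phase factor equals $1$ and $\E_X[\expo{P_\Lambda}]=1$; hence the total main contribution is $\frac{1}{\prod_i(2^{k_i+1})^m}\prod_i|\Phi_{d_i,k_i}(N)^\perp|$. By Pontryagin duality, $\Phi_{d_i,k_i}(N)^\perp$ is the annihilator in $(\ZZ/2^{k_i+1}\ZZ)^m$ of the subgroup $\Phi_{d_i,k_i}(N)\le\UU_{k_i+1}^m$, so $|\Phi_{d_i,k_i}(N)^\perp|=(2^{k_i+1})^m/|\Phi_{d_i,k_i}(N)|$, and the main term collapses to $\prod_i|\Phi_{d_i,k_i}(N)|^{-1}=1/K$.

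Finally, the error is handled by bounding the number of remaining $\Lambda$ by the total count $\prod_i(2^{k_i+1})^m$ and each corresponding summand by $\varepsilon/\prod_i(2^{k_i+1})^m$ in absolute value (using $|\expo{\cdot}|=1$ and the second alternative of Theorem~\ref{thm:nearOrtho}), giving total error at most $\varepsilon$, which is the claimed bound. I expect the only delicate point to be the bookkeeping in the previous paragraph: establishing that $\{\Lambda:P_\Lambda\equiv 0\}$ is \emph{exactly} the product set $\prod_i\Phi_{d_i,k_i}(N)^\perp$ (both inclusions, the second coming straight from the definition of the dependency set), and matching its cardinality against $K$ via the duality $|\Phi^\perp|\,|\Phi|=|\UU_{k+1}^m|$. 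The homogeneity observation that rules out a nonzero-constant $P_\Lambda$, and the verification that $\beta\in\Phi$ and $\lambda\in\Phi^\perp$ annihilate the phases, are minor but necessary.
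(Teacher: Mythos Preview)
Your proposal is correct and is exactly the standard derivation the paper has in mind: the paper does not spell out a proof but simply states that Theorem~\ref{thm:equidist} follows by ``directly applying Theorem~\ref{thm:nearOrtho}'', and your character expansion followed by the main-term/error-term split via Near Orthogonality and the duality count $|\Phi_{d_i,k_i}(N)^\perp|\cdot|\Phi_{d_i,k_i}(N)|=(2^{k_i+1})^m$ is precisely that direct application.
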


In particular, taking $N$ to have a single element yields an estimate on the size of each atom.

\begin{cor}[Size of atoms, {\cite[Lemma~3.2]{VeryCountingMaybe}}]
\label{cor:atomSize}
Given $\varepsilon > 0$, let $\cB$ be an $\varepsilon$-uniform polynomial factor of
degree $d > 0$ and  complexity $C$ that is defined by a tuple of homogeneous
polynomials $P_1, \dots, P_C: \FF_2^n
\to \T$ having respective
depths $k_1, \dots, k_C$.

Suppose $b=(b_1,\dots,b_C) \in \T^{C}$  is such that $b_i \in \UU_{k_i+1}$ for every $i \in [C]$. Then
$$
\left|\Pr_{x}\left[P_i(x) = b_i~
 \forall i \in [C] \right] - \frac{1}{\|\cB\|}\right| \leq \varepsilon.
$$
\end{cor}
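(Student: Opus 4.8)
The plan is to obtain this as the special case $m=1$ of the Near-equidistribution theorem (Theorem~\ref{thm:equidist}). Take $N=\{L_1\}$ to be the system consisting of the single linear form $L_1(x)=x$ in $\ell=1$ variable. A point $X\in(\FF^n)^1$ is then just an element $x\in\FF^n$ with $L_1(X)=x$, so the event ``$P_i(L_j(X))=\beta_{i,j}$ for all $i\in[C]$, $j\in[m]$'' is precisely the event ``$P_i(x)=b_i$ for all $i\in[C]$'' once we set $\beta_{i,1}=b_i$. The remaining hypotheses of Theorem~\ref{thm:equidist} --- that $\cB$ is $\varepsilon$-uniform, has degree $d>0$, and is defined by homogeneous polynomials $P_1,\dots,P_C$ --- are exactly those assumed in the corollary.

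The only real content is to identify the normalizing constant. We must check that $(b_1,\dots,b_C)$ is admissible, i.e.\ $b_i\in\Phi_{d_i,k_i}(N)$ for each $i$, and that $K=\prod_{i=1}^C|\Phi_{d_i,k_i}(N)|$ equals $\|\cB\|=\prod_{i=1}^C 2^{k_i+1}$. For a single linear form in one variable, $\Phi_{d_i,k_i}(N)$ is by definition the set of values taken by homogeneous polynomials of degree $d_i$ and depth $k_i$ on $\FF^n$, and it is a subgroup of $\UU_{k_i+1}$ by Definition~\ref{def:consistent}. I claim it is all of $\UU_{k_i+1}$. We may assume $P_i$ is nonconstant, so $d_i\geq 1$; since $P_i$ is a degree-$d_i$ polynomial on $\FF^n$, its representation from Lemma~\ref{lem:nonclassrep} contains a monomial realizing the degree $d_i$ with some depth parameter $k'\leq k_i$, which forces $n\geq d_i-k'\geq d_i-k_i\geq 1$. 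Consequently the single monomial $P(x)=|x_1||x_2|\cdots|x_{d_i-k_i}|/2^{k_i+1}$ is a well-defined homogeneous polynomial on $\FF^n$ of degree exactly $d_i$ and depth exactly $k_i$, and it takes the value $2^{-(k_i+1)}$ at the point with $x_1=\cdots=x_{d_i-k_i}=1$. Thus $\Phi_{d_i,k_i}(N)$ contains a generator of the cyclic group $\UU_{k_i+1}$ of order $2^{k_i+1}$, hence equals it. Therefore $K=\prod_{i=1}^C 2^{k_i+1}=\|\cB\|$, and the hypothesis $b_i\in\UU_{k_i+1}$ makes $(b_1,\dots,b_C)$ admissible.

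Substituting these identifications into the conclusion of Theorem~\ref{thm:equidist} gives
\[
\left|\Pr_x\left[P_i(x)=b_i\ \forall i\in[C]\right]-\frac{1}{\|\cB\|}\right|\leq\varepsilon,
\]
which is the assertion. I do not expect a genuine obstacle here: all of the analytic work is already packaged inside Theorem~\ref{thm:equidist} (and hence Theorem~\ref{thm:nearOrtho}), and what remains is the elementary bookkeeping above together with the easy observation that the one-variable consistency group $\Phi_{d_i,k_i}(N)$ is the full group $\UU_{k_i+1}$.
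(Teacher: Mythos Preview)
Your proposal is correct and follows exactly the approach indicated in the paper, which simply notes that ``taking $N$ to have a single element yields an estimate on the size of each atom.'' You have merely filled in the bookkeeping the paper omits---in particular the verification that $\Phi_{d_i,k_i}(\{L_1\})=\UU_{k_i+1}$ so that $K=\|\cB\|$---and this verification is sound.
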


The following construction will be useful in allowing us to apply Theorem~\ref{thm:nearOrtho} to the terms we wish to bound. Given a system of linear forms $N=\{L_1,\dots,L_m\}$ on $\ell$ variables, where $L_1(x_1,\dots,x_\ell)=x_1$, let $N'$ be a system of linear forms $\tilde L_1,\dots,\tilde L_{2m-1}$ on $2\ell-1$ variables, where, if $X=(x_1,\dots,x_\ell)$ and $Y=(y_2,\dots,y_\ell)$, then $\tilde L_i(X,Y)=L_i(X)$ for $i\in\{1,\dots,m\}$ and $\tilde L_i(X,Y)=L_{i-m+1}(x_1,Y)$ for $i\in\{m+1,\dots,2m-1\}$. That is, $N'$ corresponds to a rank-$(2\ell-1)$ matroid consisting of the union of two copies of $N$ with $L_1=N_1$ as the only shared element, a so-called \emph{parallel connection} of $N$ with itself at $N_1$. We have the following relation between the sizes of the $(d,k)$-dependency sets of $N$ and $N'$.

\begin{lem}
\label{lem:equivConsistent}
For each $i$, $|\Phi_{d_i,k_i}(N')^\perp|=|\Phi_{d_i,k_i}(N)^\perp|^2$.
\end{lem}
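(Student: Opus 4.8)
The claim is that the $(d_i,k_i)$-dependency set of the ``glued'' system $N'$ factors as a product of two copies of the dependency set of $N$. The natural strategy is to prove the dual statement $\Phi_{d_i,k_i}(N') = \Phi_{d_i,k_i}(N) \times \Phi_{d_i,k_i}(N)$ as subgroups of $\UU_{k_i+1}^{2m-1}$, where the first factor sees coordinates $1,\dots,m$ and the second sees coordinates $1, m+1,\dots,2m-1$ (with coordinate $1$ shared, so a priori this ``product'' needs care — see below). Once the consistency sets are understood, taking orthogonal complements inside the finite abelian group $\UU_{k_i+1}^{2m-1}$ and counting gives the cardinality identity, using that $|\Phi^\perp|\cdot|\Phi| = |\UU_{k_i+1}|^{(\text{number of coordinates})}$ for the relevant annihilator pairings (this is the standard Pontryagin-duality count for subgroups of a finite abelian group and a perfect pairing).

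**Key steps, in order.** First I would unwind the definition: a vector $(\beta_1,\dots,\beta_{2m-1})$ is $(d_i,k_i)$-consistent with $N'$ iff there is a homogeneous polynomial $P$ of degree $d_i$, depth $k_i$, and a point $(X,Y) = (x_1,\dots,x_\ell,y_2,\dots,y_\ell) \in (\FF^n)^{2\ell-1}$ with $P(\tilde L_j(X,Y)) = \beta_j$ for all $j$. Because $\tilde L_j$ for $j\le m$ depends only on $X$ and $\tilde L_j$ for $j>m$ depends only on $(x_1,Y)$, and $\tilde L_1(X,Y) = x_1 = \tilde L_{m+1}\!\!\restriction$ in the sense that both halves share the variable $x_1$, the first $m$ constraints are exactly the consistency condition for $N$ at the point $X$, and the last $m-1$ constraints together with $\beta_1$ are exactly the consistency condition for a second copy of $N$ at the point $(x_1, y_2,\dots,y_\ell)$. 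The crucial observation is that the two copies of $N$ share \emph{only} the element $N_1$, i.e. the linear forms $L_2,\dots,L_m$ in the $X$-variables and $L_2,\dots,L_m$ in the $(x_1,Y)$-variables involve disjoint sets of ``free'' variables apart from $x_1$ (namely $x_2,\dots,x_\ell$ versus $y_2,\dots,y_\ell$). Hence one can independently choose $x_2,\dots,x_\ell$ and $y_2,\dots,y_\ell$: given any $\beta = (\beta_1,\dots,\beta_m) \in \Phi_{d_i,k_i}(N)$ realized by $(P,X)$ and any $\beta' = (\beta_1,\beta'_2,\dots,\beta'_m)\in \Phi_{d_i,k_i}(N)$ with matching first coordinate, realized by $(P, X'')$ — here I need the same $P$; see the obstacle below — one can glue the points along the shared value at $N_1$ to realize $(\beta_1,\dots,\beta_m,\beta'_2,\dots,\beta'_m)$ for $N'$. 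This shows $\Phi_{d_i,k_i}(N')$ is the fibered product $\Phi_{d_i,k_i}(N) \times_{\pi_1} \Phi_{d_i,k_i}(N)$ over the first-coordinate projection $\pi_1$.

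**The main obstacle.** The delicate point is that consistency quantifies over \emph{a single} homogeneous polynomial $P$ realizing \emph{all} the prescribed values simultaneously, so when gluing two realizations I must use the same $P$ on both halves; I would handle this exactly as in \cite{hatamiRegCount}: the group structure on $\Phi_{d_i,k_i}(N)$ comes from adding polynomials, and one shows that a single ``universal'' construction (taking $P$ to be a generic homogeneous polynomial of the right degree and depth in enough variables, as in the proof of near-equidistribution) realizes an arbitrary element of $\Phi_{d_i,k_i}(N)$ at a suitable point, after which gluing along $x_1$ is immediate because the free variables are disjoint. The second, more quantitative obstacle is turning the fibered-product description into the clean factor-of-two-in-the-exponent count: one checks that for the pairing $(\lambda,\beta)\mapsto \sum \lambda_j\beta_j$ on $\UU_{k_i+1}^{2m-1}$, the annihilator of the fibered product $\Phi \times_{\pi_1}\Phi$ is precisely $\{(\lambda,\mu) : (\lambda_1 + \mu_1, \lambda_2,\dots,\lambda_m) \in \Phi^\perp,\ (\lambda_1+\mu_1,\mu_2,\dots,\mu_m)\in\Phi^\perp\}$ — wait, this needs the shared coordinate bookkeeping done carefully — and a direct count of this set, using $|\Phi^\perp| = |\UU_{k_i+1}|^m / |\Phi|$ and $|\Phi \times_{\pi_1}\Phi| = |\Phi|^2 / |\pi_1(\Phi)|$ together with $|\pi_1(\Phi)| = |\UU_{k_i+1}|$ (the first coordinate is unconstrained, since $N_1$ alone imposes no dependency — or more precisely $\Phi_{d_i,k_i}(\{N_1\}) = \UU_{k_i+1}$), yields $|\Phi_{d_i,k_i}(N')^\perp| = |\Phi_{d_i,k_i}(N)^\perp|^2$. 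I expect the bookkeeping of the shared coordinate in the annihilator computation to be the part most prone to sign/indexing errors, but no conceptual difficulty beyond that.
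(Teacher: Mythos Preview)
Your approach is genuinely different from the paper's. The paper never computes $\Phi_{d_i,k_i}(N')$ at all; instead it writes down an explicit map $\varphi:\Phi_{d_i,k_i}(N)^\perp\times\Phi_{d_i,k_i}(N)^\perp\to\Phi_{d_i,k_i}(N')^\perp$, namely $\varphi((\lambda_1,\dots,\lambda_m),(\tau_1,\dots,\tau_m))=(\lambda_1+\tau_1,\lambda_2,\dots,\lambda_m,\tau_2,\dots,\tau_m)$, and checks by hand that $\varphi$ is a bijection. The key device for both injectivity and surjectivity is the observation that for any $\lambda\in\Phi_{d_i,k_i}(N)^\perp$, the coordinate $\lambda_1$ is forced by $\lambda_2,\dots,\lambda_m$ (one plugs in a linear polynomial and the point $(x_1,0,\dots,0)$). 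This is completely elementary and avoids any appeal to equidistribution or duality counts.

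Your dual route---identify $\Phi_{d_i,k_i}(N')$ as the fibered product $\Phi_{d_i,k_i}(N)\times_{\pi_1}\Phi_{d_i,k_i}(N)$ and then count annihilators---also works, but two steps in your sketch need tightening. First, your resolution of the same-$P$ obstacle via a ``universal'' polynomial is correct in spirit but heavier than necessary: a cleaner argument uses only the group structure, noting that the diagonal $(\beta',\beta'_{2},\dots,\beta'_{m})$ lies in $\Phi_{d_i,k_i}(N')$ (take $Y=(x_2',\dots,x_\ell')$) and that any $\gamma\in\Phi_{d_i,k_i}(N)$ with $\gamma_1=0$ extends to $(\gamma,0,\dots,0)\in\Phi_{d_i,k_i}(N')$ (take $W=0$ and use homogeneity). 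Second, your assertion $\pi_1(\Phi_{d_i,k_i}(N))=\UU_{k_i+1}$ is not the same as $\Phi_{d_i,k_i}(\{N_1\})=\UU_{k_i+1}$; you need the (easy) additional remark that any realization of $\gamma$ at $L_1$ extends to a tuple for all of $N$ by simply evaluating the same $P$ at the remaining $L_j(X)$. With those two points fixed, your counting $|\Phi(N')^\perp|=|\UU_{k_i+1}|^{2m-1}/|\Phi(N')|=|\UU_{k_i+1}|^{2m}/|\Phi(N)|^2=|\Phi(N)^\perp|^2$ goes through. The paper's argument is shorter and more self-contained; yours is more structural and makes the role of the shared coordinate transparent.
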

\begin{proof}
The proof is essentially the same as that of Lemma~5.13 in \cite{VeryCountingMaybe}, except that a key step now uses relevant polynomials being homogeneous instead of relevant linear forms being affine.

Consider the map $\varphi: \Phi_{d_i,k_i}(N)^\perp \times \Phi_{d_i,k_i}(N)^\perp \to \Phi_{d_i,k_i}(N')^\perp$ given by
\[\varphi((\lambda_1,\dots,\lambda_m),(\tau_1,\dots,\tau_m))=(\lambda_1 + \tau_1 ,\lambda_2,\dots,\lambda_m, \tau_2,\dots,\tau_m).\]

If $\lambda=(\lambda_1,\dots,\lambda_m),\tau=(\tau_1,\dots,\tau_m)\in \Phi_{d_i,k_i}(N)^\perp$, then $\sum_{j=1}^m \lambda_j P(L_j(X)) = \sum_{j=1}^m \tau_j P(L_j(X)) = 0$ for all $P$ and $X$.  So, for all $(X,Y)$,

\[\sum_{j=1}^{2m-1} \varphi(\lambda,\tau)_j P(\tilde{L}_j(X,Y))=\sum_{j=1}^m \lambda_j P(L_j(X)) + \sum_{j=1}^m \tau_j P(L_j(x_1,Y)) = 0.\]

Thus $\varphi$ indeed maps $\Phi_{d_i,k_i}(N)^\perp \times \Phi_{d_i,k_i}(N)^\perp$ to $\Phi_{d_i,k_i}(N')^\perp$. We claim that $\varphi$ is a bijection, which then implies the desired equality.

Suppose $\lambda\in \Phi_{d_i,k_i}(N)^\perp$. By the definition of a linear form, for each $i$ either $L_i(x_1,0,\dots,0)\equiv 0$ or $L_i(x_1,0,\dots,0)\equiv x_1$. Let $S$ be the set of $i$ such that $L_i(x_1,0,\dots,0)\equiv x_1$. Note that $1\in S$. Setting $x_2=\cdots=x_\ell=0$ and setting $P,x_1$ such that $P$ is a linear polynomial with $P(x_1)=1,P(0)=0$ gives
\[
0=\left(\sum_{j\in S} \lambda_j\right) P(x_1) + \left(\sum_{j\notin S} \lambda_j\right) P(0)=\sum_{j\in S} \lambda_j,
\]
so $\sum_{j\in S} \lambda_j = 0$, meaning $\lambda_1=-\sum_{\substack{j\in S \\ j\neq 1}} \lambda_j$. Thus $\lambda$ is uniquely determined given $\lambda_2,\dots,\lambda_m$, and so both $\lambda$ and $\tau$ are uniquely determined given $\varphi(\lambda,\tau)$, meaning $\varphi$ is injective.

Now suppose $(\lambda_1,\dots,\lambda_m,\tau_2,\dots,\tau_m)\in \Phi_{d_i,k_i}(N')^\perp$, so $\sum_{j=1}^m (\lambda_j Q(L_j(X))+\sum_{j=2}^m \tau_j Q(L_j(x_1,Y)))=0$ for every $Q$ and $(X,Y)$. For convenience, let $\tau_1=\lambda_1$. Similarly to before, setting $x_2=\cdots=x_\ell=0$ gives
\[\sum_{j\in S} \lambda_j Q(x_1)+\sum_{j=2}^m \tau_j Q(L_j(x_1,Y))=0,\]
for any $x_1,Y$ and any homogeneous $Q$, while setting $y_2=\cdots=y_\ell=0$ gives
\[\sum_{j=2}^m \lambda_j Q(L_j(X))+\sum_{j\in S} \tau_j Q(x_1)=0,\]
for any $X$ and any homogeneous $Q$. Here we have used the fact that $Q(0)=0$.

In particular, setting $x_2=\cdots=x_\ell=y_2=\cdots=y_\ell=0$ and setting $Q,x_1$ such that $Q$ is a linear polynomial with $Q(x_1)=1,Q(0)=0$ gives $0 = \sum_{j\in S} \lambda_j + \sum_{j\in S} \tau_j - \lambda_1 = \sum_{\substack{j\in S \\ j\neq 1}} (\lambda_j+\tau_j) + \lambda_1$. So, fixing $X$,

\begin{align*}
& 0=\sum_{j=2}^m \lambda_j Q(L_j(X))+\sum_{j\in S} \tau_j Q(x_1) = \sum_{j=2}^m \lambda_j Q(L_j(X)) - \sum_{j\in S} \lambda_j Q(x_1) + \lambda_1 Q(x_1) \\
& = \left( - \sum_{\substack{j\in S\\ j\neq 1}} \lambda_j\right) Q(x_1)  + \sum_{j=2}^m \lambda_j Q(L_j(X)).
\end{align*}

Thus $\left(- \sum_{\substack{j\in S\\ j\neq 1}} \lambda_j,\lambda_2,\dots,\lambda_m\right)\in \Phi_{d_i,k_i}(N)^\perp$, and $\left(- \sum_{\substack{j\in S\\ j\neq 1}} \tau_j,\tau_2,\dots,\tau_m\right)\in \Phi_{d_i,k_i}(N)^\perp$ likewise. But
\[
    \varphi\left(\left(- \sum_{\substack{j\in S\\ j\neq 1}} \lambda_j,\lambda_2,\dots,\lambda_m\right),\left(- \sum_{\substack{j\in S\\ j\neq 1}} \tau_j,\tau_2,\dots,\tau_m\right)\right)=(\lambda_1,\dots,\lambda_m,\tau_2,\dots,\tau_m).
\]
So $\varphi$ is surjective, and thus bijective as desired.

\end{proof}

Now we proceed with the proof of the Counting Lemma.

\begin{proof}[Proof of Theorem~\ref{thm:counting}]
Let $\ell=r(N)$ and $\alpha(C)=2^{-2dCm}$. We set $r(C)$ to be the integer $r(d,\alpha(C))$ given by Proposition~\ref{prop:regUni} such that every $r$-regular degree $d$ polynomial factor is $\alpha(C)$-uniform. Let $\cB=(P_1,\dots,P_C)$. So, $\cB$ is $\alpha(C)$-uniform. Notice that $\|\cB\|=\prod_{i=1}^C 2^{k_i+1}\leq 2^{dC}$. 

We want a lower bound for the probability that for a linear map $\iota:\FF_2^\ell\to \FF_2^n$ chosen uniformly at random, each element of $N$ is sent inside $M$. Since degenerate maps (ones where the image of $N$ is of lower rank than $N$) are sparse, this will give us that a constant fraction of the copies of $N$ in $\FF_2^n$ (depending on $\|\cB\|$ in an appropriate way) are contained in $M$.

As before, we represent $N$ as a system $\{L_1,\dots,L_m\}$ of linear forms on $\ell$ variables. The probability we want to bound is then

\begin{align*}
    &\Pr_{X\in (\FF_2^{n})^\ell}[L_j(X)\in M ~ \forall j \in [1,m] ]=\E_{X}\left[\prod_{j=1}^m f(L_j(X))\right]\\
    &=\sum_{(i_1,\dots,i_m)\in \{1,2,3\}^m}\E_{X}\left[\prod_{j=1}^m f_{i_j}(L_j(X))\right].
\end{align*}
There are $3^m$ terms in the sum. One of these, the one that will turn out to be the main term, involves only $f_1$. Of the rest, $2^m-1$ involve $f_1$ and $f_3$ but not $f_2$, and the other $3^m-2^m$ terms involve $f_2$.

If one of the $i_j$ is $2$, then since $d\geq m-2$, by Lemma~\ref{lem:gowerscount} we have
\[\left|\E_{X}\left[\prod_{j=1}^m f_{i_j}(L_j(X))\right]\right|\leq \min_{1\leq j\leq m} \|f_{i_j}\|_{U^{d+1}}\leq \|f_2\|_{U^{d+1}}\leq \eta(|\cB|).
\]

We can choose $\eta$ later to make all of these terms sufficiently small. Our probability is thus at least

\[\sum_{(i_1,\dots,i_m)\in \{1,3\}^m}\E_{X}\left[\prod_{j=1}^m f_{i_j}(L_j(X))\right]-3^m \eta(|\cB|).\]

To deal with the remaining terms, we establish a lower bound by only counting within certain ``good'' atoms of the decomposition. Specifically, fix a point $X_0\in \FF_2^n$ such that $L_1(X_0),\dots,L_m(X_0)\in R_{\varepsilon,\zeta}$, i.e. $X_0$ corresponds to a linear map $\iota_{X_0}:\FF_2^\ell\to \FF_2^n$ that acts as a homomorphism from $N$ to the reduced matroid $R_{\varepsilon,\zeta}$. Let $\beta_{i,j}=P_i(L_j(X_0))$, and let $b_j=(\beta_{1,j},\dots,\beta_{C,j})$ be the atom of $\cB$ that $L_j(X_0)$ is in. Define $\mathcal{A}_{X_0}=\{X\in (\FF_2^n)^k:\: L_j(X)=b_j\text{ for all }j\}$. We establish a lower bound by only counting across $X\in \mathcal{A}_{X_0}$. Since $f_1+f_3$ is always nonnegative,

\begin{align*}
    & \sum_{(i_1,\dots,i_m)\in \{1,3\}^m}\E_{X}\left[\prod_{j=1}^m f_{i_j}(L_j(X))\right] \geq\sum_{(i_1,\dots,i_m)\in \{1,3\}^m}\E_{X}\left[1_{\mathcal{A}_{X_0}}\prod_{j=1}^m f_{i_j}(L_j(X))\right] \\
    & =\sum_{(i_1,\dots,i_m)\in \{1,3\}^m}\E_{X}\left[\prod_{j=1}^m f_{i_j}(L_j(X))1_{[\cB(L_j(X))=b_j]}\right].
\end{align*}

We next deal with the main term. Since $f_1\geq 0$, applying Theorem~\ref{thm:equidist} with $\varepsilon=\alpha(C)$ gives
\begin{align*}
    & \E_{X}\left[\prod_{j=1}^m f_{1}(L_j(X)) 1_{[\cB(L_j(X))=b_j]}\right] \\
    &= \Pr_{X\in (\FF_2^n)^\ell}\left[P_i(L_j(X)) = \beta_{i,j}~
 \forall i \in [C],j \in [m] \right]\cdot\\
 &\qquad \qquad \E_{X}\left[\prod_{j=1}^m f_{1}(L_j(X))\middle| \forall j\in[m], \cB(L_j(X))=b_j \right] \\
 &\geq \left(\frac{1}{K} - \alpha(C)\right) \zeta^m.
\end{align*}

Now we deal with the terms involving $f_3$, following the argument used in the corresponding part of the proof of Theorem~5.10 in \cite{VeryCountingMaybe}.

Take such a term $\E_{X}\left[\prod_{j=1}^m f_{i_j}(L_j(X))1_{[\cB(L_j(X))=b_j]}\right]$, and suppose $i_k=3$. Without loss of generality we can take a linear transformation of coordinates and assume $L_k(x_1,\dots,x_\ell)=x_1$. We can also assume without loss of generality that $k=1$. Then, since $|f_1|,|f_3|\leq 1$, we have

\begin{align*}
    &\left|\E_{X}\left[\prod_{j=1}^m f_{i_j}(L_j(X))1_{[\cB(L_j(X))=b_j]}\right]\right| \leq \E_{X}\left[ \left|f_{3}(x_1)\right|\prod_{j=1}^m 1_{[\cB(L_j(X))=b_j]}\right] \\
&=\E_{x_1}\left[\left|f_{3}(x_1)\right|1_{[\cB(x_1)=b_1]}\E_{x_2,\dots,x_\ell}\left[ \prod_{j=1}^m 1_{[\cB(L_j(X))=b_j]}\right]\right].
\end{align*}

By Cauchy-Schwarz,

\begin{align*}
    & \left(\E_{X}\left[ \left|f_{3}(x_1)\right|\prod_{j=1}^m 1_{[\cB(L_j(X))=b_j]}\right]\right)^2 \\
    & \leq \E_{x_1}\left[\left|f_{3}(x_1)\right|^2 1_{[\cB(x_1)=b_1]}\right]\E_{x_1}\left(\E_{x_2,\dots,x_\ell}\left[ \prod_{j=1}^m 1_{[\cB(L_j(X))=b_j]}\right]\right)^2.\label{eqn:csterm} \tag{*}
\end{align*}

By Corollary~\ref{cor:atomSize}, $\Pr_{x_1}[\cB(x_1)=b_1]\leq \frac{1}{\|\cB\|}+\alpha(C)$. Thus by condition (1) in the definition of the reduced matroid,

\begin{align*}
    & \E_{x_1}\left[\left|f_{3}(x_1)\right|^2 1_{[\cB(x_1)=b_1]}\right]=\E_{x_1}\left[\left|f_{3}(x_1)\right|^2 \mid x_1\in b_1\right] \Pr_{x_1}[\cB(x_1)=b_1] \\
    & \leq \varepsilon^2 \left(\frac{1}{\|\cB\|}+\alpha(C)\right) \leq \frac{2\varepsilon^2}{\|\cB\|}.
\end{align*}

Let $Y=(y_2,\dots,y_\ell)\in(\FF_2^n)^{\ell-1}$, so that $(x_1,Y)$ forms another input to the linear forms $L_j$ such that $L_1(x_1,Y)=L_1(X)=x_1$. The second term in the right hand side of (\ref{eqn:csterm}) expands as

\begin{align*}
    & \E_{x_1}\left(\E_{x_2,\dots,x_l}\left[ \prod_{j=1}^m 1_{[\cB(L_j(X))=b_j]}\right]\right)^2
    = \E_{x_1}\left(\E_{x_2,\dots,x_l}\left[ \prod_{\substack{i\in [C]\\ j\in [m]}} 1_{[P_i(L_j(X))=\beta_{i,j}]}\right]\right)^2 \\
    & = \E_{x_1}\left(\E_{x_2,\dots,x_l}\left[ \prod_{\substack{i\in [C]\\ j\in [m]}} \frac{1}{2^{k_i+1}}\sum_{\lambda_{i,j}=0}^{2^{k_i+1}-1}\expo{\lambda_{i,j}(P_i(L_j(X))-\beta_{i,j})}\right]\right)^2 \\
    & = \frac{1}{\|\cB\|^{2m}}\E_{x_1}\left(\sum_{\substack{(\lambda_{i,j})\in\\ \prod_{i,j}[0,2^{k_i+1}-1]}}\expo{-\sum_{\substack{i\in [C]\\ j\in [m]}}\lambda_{i,j}\beta_{i,j}}\E_{x_2,\dots,x_l} \expo{\sum_{\substack{i\in[C]\\ j\in[m]}} \lambda_{i,j}P_i(L_j(X))}\right)^2 .
\end{align*}
Expanding the square, pulling the expectation over $x_1$ inside the resulting sum, and applying the triangle inequality shows that this is at most
\begin{align*}
    & \frac{1}{\|\cB\|^{2m}}\sum_{\substack{(\lambda_{i,j}),(\tau_{i,j})\in\\ \prod_{i,j}[0,2^{k_i+1}-1]}}\left |\E_{\substack{X,Y}}\left[\expo{\sum_{\substack{i\in[C]\\ j\in[m]}} \lambda_{i,j}P_i(L_j(X))}\expo{\sum_{\substack{i\in[C]\\ j\in[m]}} \tau_{i,j}P_i(L_j(x_1,Y))}\right]\right|.
\end{align*}

To bound this expression, we will interpret it as an expectation of the form for which Theorem~\ref{thm:nearOrtho} gives upper bounds, in terms of the system of linear forms $N'$ constructed before Lemma~\ref{lem:equivConsistent}. Recall that $N'=\{\tilde L_1,\dots,\tilde L_{2m-1}\}$, where $\tilde L_i(X,Y)=L_i(X)$ for $i\in\{1,\dots,m\}$ and $\tilde L_i(X,Y)=L_{i-m+1}(x_1,Y)$ for $i\in\{m+1,\dots,2m-1\}$. As in Lemma~\ref{lem:equivConsistent}, for $\lambda_i, \tau_i\in [0,2^{k+1}-1]^m$ we define $\varphi(\lambda_i,\tau_i)=(\lambda_{i,1}+\tau_{i,1},\lambda_2,\dots,\lambda_m,\tau_2,\dots,\tau_m)$. Letting $\mu_i = \varphi(\lambda_i,\tau_i)$, we have
\begin{align*}
    & \sum_{\substack{(\lambda_{i,j}),(\tau_{i,j})\in\\ \prod_{i,j}[0,2^{k_i+1}-1]}}\left |\E_{\substack{X,Y}}\left[\expo{\sum_{\substack{i\in[C]\\ j\in[m]}} \lambda_{i,j}P_i(L_j(X))}\expo{\sum_{\substack{i\in[C]\\ j\in[m]}} \tau_{i,j}P_i(L_j(x_1,Y))}\right]\right|\\
    & = \left(\prod_{i=1}^C 2^{k_i+1} \right)\sum_{\substack{(\mu_{i,j})\in\\ \prod_{i\in [C], j\in [2m-1]}[0,2^{k_i+1}-1]}}\left |\E_{X,Y}\left[\expo{\sum_{\substack{i\in[C]\\ j\in[2m-1]}} \mu_{i,j}P_i(\tilde{L}_j(X,Y))}\right]\right|\\
    & \leq \left(\prod_{i=1}^C 2^{k_i+1}\right)^{2m} \alpha(C) + \left(\prod_{i=1}^C 2^{k_i+1}\right)\prod_{i=1}^C |\Phi_{d_i,k_i}(N')^\perp| \\
    & = \|\cB\|^{2m} \alpha(C) + \|\cB\|\prod_{i=1}^C |\Phi_{d_i,k_i}(N')^\perp| =  \|\cB\|^{2m} \alpha(C) + \|\cB\|\prod_{i=1}^C |\Phi_{d_i,k_i}(N)^\perp|^2 \\
    & = \|\cB\|^{2m}\alpha(C)+\|\cB\|\prod_{i=1}^C \left(\frac{(2^{k_i+1})^m}{|\Phi_{d_i,k_i}(N)|}\right)^2 = \|\cB\|^{2m}\left(\alpha(C)+\frac{\|\cB\|}{K^2}\right),
\end{align*}

\noindent where $K=\prod_{i=1}^C |\Phi_{d_i,k_i}(N)|$. Here the third line is an application of Theorem~\ref{thm:nearOrtho}, the fourth line follows by Lemma~\ref{lem:equivConsistent}, and the last line follows by observing, as in~\cite{hatamiRegCount}, that $|\Phi_{d_i,k_i}(N)^\perp||\Phi_{d_i,k_i}(N)|=2^{(k_i+1)m}$. Thus,

\begin{align*}
    & \left(\E_{X}\left[ \left|f_{3}(x_1)\right|\prod_{j=1}^m 1_{[\cB(L_j(X))=b_j]}\right]\right)^2 \leq \varepsilon^2 \left(\frac{2}{\|\cB\|}\right) \left(\alpha(C)+\frac{\|\cB\|}{K^2}\right)\\
    & \leq 2\varepsilon^2 \left(\alpha(C)+\frac{1}{K^2}\right),
\end{align*}
so each term in our original sum that involves $f_3$ but not $f_2$ has magnitude at most
\[ \sqrt{2\varepsilon^2 \left(\alpha(C)+\frac{1}{K^2}\right)}= \varepsilon \sqrt{2\left(\alpha(C)+\frac{1}{K^2}\right)}.\]

Finally, we bring everything together. Note that $K= \prod_{i=1}^C |\Phi_{d_i,k_i}(N)|\leq \|\cB\|^m \leq 2^{dCm}$, so $\alpha(C)\leq \frac{1}{K^2}$. Setting $\eta(C) = \left(\frac{\zeta}{3}\right)^m 2^{-dCm-3}$, $\varepsilon_0=\frac{1}{16}\left(\frac{\zeta}{2}\right)^m$, we have

\begin{align*}
    & \E_{X}\left[\prod_{j=1}^m f(L_j(X))\right] \geq \left(\frac{1}{K} - \alpha(C)\right) \zeta^m - 2^{m+\frac{1}{2}} \varepsilon \sqrt{\alpha(C)+\frac{1}{K^2}} - 3^m \eta(|\cB|) \\
    & \geq \frac{1}{2K} \zeta^m - 2^{m+1} \varepsilon \frac{1}{K} - \frac{1}{8K}\zeta^m \\
    & \geq \zeta^m \frac{1}{4K} \geq \frac{\zeta^m}{4} \frac{1}{\|\cB\|^m}.
\end{align*}

So, there are at least $\frac{\zeta^m}{4} \frac{(2^n)^\ell}{\|\cB\|^m}$ linear maps $\iota$ such that $\iota(N)\subseteq M$. At most $\ell (2^n)^{\ell-1}2^{\ell-1}$ such linear maps are not injections. When $n$ is sufficiently large compared to $\ell$ and $C$, this is negligible compared to the number of linear maps we obtained, so for some choice of $\nu$, the number of injections $\iota$ taking $N$ into $M$ is at least $\frac{\zeta^m}{5} \frac{(2^n)^\ell}{\|\cB\|^m}$ for large enough $n$. Since $N$ has at most $2^{\ell^2}$ automorphisms, there are at least $\frac{\zeta^m}{5\cdot 2^{\ell^2}} \frac{(2^n)^\ell}{\|\cB\|^m}$ distinct copies of $N$ in $M$. Taking $\beta=\frac{\zeta^m}{5\cdot 2^{\ell^2}}$, this concludes the proof of the Counting Lemma.

\end{proof}

\section{Basic Applications} \label{sec:apply}
\subsection{The Removal Lemma}
As a first application of our Counting Lemma, we give a simple proof of a removal lemma for matroids. This removal lemma is a special case of a removal lemma for linear equations that appears in \cite{hyperRemoval} and \cite{MatroidRemoval}, in both cases proven using a hypergraph removal lemma.

We say that a matroid $M\subseteq \FF_2^{r(M)}\setminus \{0\}$ is \emph{$\varepsilon$-far from being $N$-free} if for any matroid $M'\subseteq \FF_2^{r(M)}\setminus \{0\}$ that does not contain a copy of $N$, $\E_x[|1_M-1_{M'}|]\geq \varepsilon$.

\begin{thm}[Removal Lemma]
For any $\zeta>0$ and matroid $N$ there is a $\alpha>0$ such that if a matroid $M$ of sufficiently high rank $r(M)$ is $\zeta$-far from being $N$-free, then $M$ contains at least $\alpha\cdot (2^{r(M)})^{r(N)}$ copies of $N$.
\end{thm}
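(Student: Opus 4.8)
The plan is to mirror the standard derivation of the graph removal lemma from the Szemerédi regularity and counting lemmas, replacing those with Theorem~\ref{thm:decomp} and Theorem~\ref{thm:counting}. Fix $N$ and $\zeta>0$, write $m=|N|$, $\ell=r(N)$, and fix an integer $d\geq\max\{|N|-2,1\}$. First I would apply the Counting Lemma (Theorem~\ref{thm:counting}) with inputs $N$, density threshold $\zeta_0:=\zeta/2$, and $d$; this produces constants $\beta,\varepsilon_0>0$ and functions $\eta,r,\nu$ depending only on $N$ and $\zeta$. I would then set $\varepsilon:=\varepsilon_0$, choose $\delta>0$ small enough that $\delta^2/\varepsilon_0^2<\zeta/2$, and apply the Strong Decomposition Theorem (Theorem~\ref{thm:decomp}) to $f=1_M$ with parameters $\delta,\eta,r,d$, obtaining a decomposition $f=f_1+f_2+f_3$ together with an $r$-regular polynomial factor $\cB$ of complexity at most $C=C(\delta,\eta,r,d)$. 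The point of extracting $\eta,r,\nu,\beta,\varepsilon_0$ from the Counting Lemma \emph{before} invoking Theorem~\ref{thm:decomp} is that $\eta$ and $r$ then do not depend on $C$, so there is no circular dependence and $C$ (hence $\|\cB\|\leq 2^{dC}$) is a constant depending only on $N$ and $\zeta$. Throughout we assume $n=r(M)$ exceeds both the rank threshold of Theorem~\ref{thm:decomp} for these parameters and $\nu(C)$, which is what ``sufficiently high rank'' means.

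Next I would define $M':=M\cap R_{\varepsilon,\zeta_0}$, that is, delete from $M$ every element lying in a ``bad'' atom of $\cB$ — an atom violating condition (1) or condition (2) in the definition of the reduced matroid. The heart of the argument is the estimate $\E_x[\abs{1_M-1_{M'}}]<\zeta$. The deleted mass splits into two parts. The elements of $M$ lying in atoms $b$ with $\E[f(x)\mid x\in b]<\zeta_0$ number at most $\zeta_0\,2^n=(\zeta/2)2^n$, directly from the density bound. The elements lying in atoms $b$ with $\E[\abs{f_3(x)}^2\mid x\in b]>\varepsilon^2$ lie in a union of atoms whose total measure is at most $\delta^2/\varepsilon^2<\zeta/2$, by Markov's inequality applied to $\E_x[\abs{f_3(x)}^2]=\|f_3\|_2^2\leq\delta^2$. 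Adding the two contributions, strictly fewer than $\zeta\,2^n$ elements are removed, as claimed.

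Finally I would conclude by dichotomy. Suppose $M$ of sufficiently high rank is $\zeta$-far from being $N$-free. Since $M'$ differs from $M$ by less than $\zeta$ in the metric $\E_x[\abs{1_M-1_{M'}}]$, $M'$ cannot be $N$-free, so there is an injective linear map realizing a copy of $N$ inside $M'\subseteq R_{\varepsilon,\zeta_0}$; in particular this is a homomorphism from $N$ to the reduced matroid $R_{\varepsilon,\zeta_0}$. The Counting Lemma then yields at least $\beta\,(2^n)^{\ell}/\|\cB\|^{m}\geq \beta\,2^{-dCm}\,(2^n)^{\ell}$ copies of $N$ in $M$, so the theorem holds with $\alpha:=\beta\,2^{-dCm}$, which depends only on $N$ and $\zeta$.

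I do not expect a deep obstacle: given the Counting Lemma this is a routine regularity-method argument, and the work is essentially bookkeeping. The two points that need care are the order of parameter selection (so that $C$ does not feed back into $\eta$, $r$, or $\beta$) and the two-part accounting of the deleted mass — low-density atoms handled by the trivial density bound, high-$L^2$-error atoms handled by Markov's inequality on $\|f_3\|_2$. A minor formal nuisance is that $R_{\varepsilon,\zeta_0}$ need not literally be a full-rank subset of $\FF^n\setminus\{0\}$, but this is irrelevant, since a copy of $N$ is by definition the image of an injective linear map and hence avoids $0$ and is full rank in its span.
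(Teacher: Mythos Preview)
Your proposal is correct and follows essentially the same approach as the paper: extract parameters from the Counting Lemma first, apply the Strong Decomposition Theorem, remove the low-density and high-$L^2$-error atoms to form $M'=M\cap R_{\varepsilon,\zeta_0}$, bound the removed mass via the density condition and Markov on $\|f_3\|_2^2$, and then invoke the Counting Lemma on the copy of $N$ surviving in $M'$. The only cosmetic differences are in the constants (the paper uses $\zeta'=\zeta/4$ and $\delta=\varepsilon\zeta'^{1/2}$, you use $\zeta_0=\zeta/2$ and an explicit Markov bound), which do not affect the argument.
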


\begin{proof}
Let $\zeta'=\frac{\zeta}{4}$, and let $d=|N|-2$. By the Counting Lemma, there exist $\beta, \eta, \varepsilon, r, \nu$ such that if the reduced matroid $R=R_{\varepsilon,\zeta'}$ given by an $(\eta,r,d)$-regular partition of a matroid $M$ with corresponding factor $\cB$ contains a copy of $N$ and $r(M)\geq \nu(|\cB|)$, then $M$ contains at least $\beta \frac{(2^{r(M)})^{r(N)}}{\|\cB\|^{|N|}}$ copies of $N$. Fix such a choice of $\beta,\eta,\varepsilon,r,\nu$.

Let $M$ be a matroid of rank $n$. Suppose that $M$ is $\zeta$-far from being $N$-free.

By Theorem~\ref{thm:decomp}, we have an $(\varepsilon \zeta'^{1/2},\eta,r,d)$-regular partition of $M$, $1_M=f_1+f_2+f_3$, whose corresponding factor $\cB$ has complexity at most $C$, where $C$ depends on only $\zeta',\varepsilon, \eta,r,d$, i.e. depends on only $\zeta, N$. Let $M'=M\cap R_{\varepsilon,\zeta'}$.

The only elements in $M\setminus M'$ are either
\begin{itemize}
    \item[(i)] In an atom $b$ of $\cB$ such that $E[|f_3(x)|^2\mid x\in b]>\varepsilon^2$, or
    \item[(ii)] In an atom $b$ of $\cB$ such that $E[f(x)\mid x\in b]<\zeta'$.
\end{itemize}

Let $S$ be the subset of $\FF_2^{n}$ contained in atoms $b$ of $\cB$ such that $E[|f_3(x)|^2\mid x\in b]>\varepsilon^2$. Then by condition (iv) of Theorem~\ref{thm:decomp},

\[ \varepsilon^2 \zeta' \geq \|f_3\|_2^2=E_x[|f_3(x)|^2] \geq \frac{|S|}{2^n} \varepsilon^2, \]

so $|S|\leq \zeta' 2^n$. Likewise, let $T$ be the subset of $\FF_2^n$ contained in atoms $b$ of $\cB$ such that $E[f(x)\mid x\in b]<\zeta'$. Then $|T\cap M|< \zeta' |T|\leq \zeta' 2^n$.

So, $E_x{|1_M-1_{M'}|}\leq \frac{|S|+|T|}{2^n} < 2\zeta'=\frac{\zeta}{2}$. Since $M$ is $\zeta$-far from being $N$-free, $M'$ cannot be $N$-free, so $M'$ contains a copy of $N$. If we take $n\geq \nu(C)$, then the Counting Lemma argument above yields that $M$ contains at least $\beta \frac{(2^{n})^{r(N)}}{\|\cB\|^{|N|}}\geq \frac{\beta}{2^{dC|N|}}(2^{n})^{r(N)}$ copies of $N$. Since $\beta$ only depends on $N$ and $\zeta$, we are done by taking $\alpha = \frac{\beta}{2^{dC|N|}}$.

\end{proof}

\subsection{The Doubling Lemma and the Geometric Erd\texorpdfstring{\H{o}}{o}s-Stone Theorem}

We extend the argument in the proof of the removal lemma to prove a simple result we will call the \emph{doubling lemma}. To state it, we define the double of a matroid.

\begin{defn}
Let $N$ be a matroid of rank $\ell$. Define its \emph{double} $2N$ to be the matroid of rank $\ell+1$ consisting of the union of $N$ with $\{x+v|x\in N\}$, where $v$ is a nonzero element not contained in the span of the elements of $N$. So, for example, $2BB(n,c)=BB(n+1,c)$. The matroid $2^k N$ is the result of starting with $N$ and doubling $k$ times.
\end{defn}

The matroid $2^k N$ corresponds to the result of replacing each element of $N$ with an affine cube of dimension $k$. Note that if there is a homomorphism from $N$ to $R$, then for any $k>1$ there is a homomorphism from $2^k N$ to $R$ because we can simply first contract each of the affine hypercubes into a point.

\begin{lem}[Doubling Lemma]
For any $\alpha>0$ and matroid $N$ there exists $\alpha'>0$ such that for sufficiently large $n$, if a matroid $M\subseteq \FF_2^n\setminus 0$ contains at least $\alpha (2^n)^{r(N)}$ copies of $N$, then it contains at least $\alpha'(2^n)^{r(N)+1}$ copies of $2N$.
\end{lem}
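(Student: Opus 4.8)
The plan is to count copies of $2N$ in $M$ by fixing a copy of $N$ and then counting the ways to extend it, averaging over all copies of $N$ that $M$ already contains. A copy of $2N$ consists of a copy of $N$ spanned by vectors $w_1,\dots,w_\ell$ together with a vector $v$ outside their span, such that the shifted set $\{w_i+v\}$ also lies in $M$. So I would write the number of (not-necessarily-injective) $2N$-homomorphisms into $M$ as a sum over copies of $N$ in $M$ of the quantity $\#\{v \in \FF^n : w_i + v \in M \text{ for all } i\}$, and lower-bound this inner count on average.

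The key point is that for a \emph{fixed} generic choice of $v$, the map sending the copy $(w_1,\dots,w_\ell)$ of $N$ to $(w_1+v,\dots,w_\ell+v)$ is a measure-preserving bijection on the set of $\ell$-tuples in $\FF^n$ (it is just translation by $v$ in each coordinate, which does not change whether the tuple forms a copy of $N$, since $N$ is an affine-type configuration under this simultaneous shift — indeed $\{w_i+v\}$ spans a copy of $2N$'s relevant structure). Thus, summing over all $v$, the total count $\sum_v \#\{\text{copies of } N \text{ in } M\} \cap \{\text{shifts by } v \text{ also in } M\}$ can be re-expressed: letting $c = \alpha(2^n)^\ell$ be the number of copies of $N$ in $M$ and $c' = |M|/2^n \cdot (\text{something})$ count copies after shifting, a Cauchy–Schwarz / second-moment argument over $v \in \FF^n$ gives that for a positive fraction of $v$, the shifted copy lands in $M$ for at least $\Omega(\alpha^2)$ fraction as many base copies. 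Concretely: the number of pairs (copy of $N$ in $M$, $v \in \FF^n$) such that the $v$-shift is also in $M$ equals $\sum_{\text{copies } c \text{ of } N} \#\{v : c+v \subseteq M\}$; by a convexity/Cauchy–Schwarz estimate in the spirit of the removal-lemma proof, this is at least roughly $\frac{(\#\{\text{copies of } N\})^2}{\#\{\text{copies of } N \text{ in } \FF^n\}} \cdot 2^n \ge \alpha^2 (2^n)^{\ell+1}$ after accounting for the fact that each base copy can be shifted by all of $\FF^n$ and the shifted copies are distributed among at most $(2^n)^\ell$ possible copies. This handles homomorphisms; passing from homomorphisms of $2N$ to genuine injective copies of $2N$ costs only lower-order terms (the degenerate maps, where $v$ lands in the span or the $2N$-structure collapses, number at most $O(\ell \cdot 2^\ell (2^n)^\ell)$, negligible for large $n$), exactly as in the Counting Lemma proof, and dividing by the bounded number of automorphisms of $2N$ gives the stated bound with $\alpha' = \alpha^2 / 2^{O(\ell^2)}$.

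The main obstacle, and the step requiring care, is making the Cauchy–Schwarz bookkeeping precise: one must correctly identify the "denominator" in the inequality $\sum_c \#\{v : c+v\subseteq M\} \ge (\text{numerator})^2/(\text{denominator})$. The natural move is to fix a representative for each equivalence class of copies under the shift action, or equivalently to observe that for each unordered copy $c$ of $N$ in $\FF^n$, the set $\{(c', v) : c' = c + v\}$ has size exactly $2^n$ (ranging over $v$), partitioning the $(c',v)$ pairs; then counting pairs with both $c'$ and $c'' := c' + v$ in $M$ — no wait, more carefully: index by the "shifted-down" copy and apply Cauchy–Schwarz to the function $v \mapsto \#\{c \subseteq M : c - v \subseteq M\}$ summed appropriately. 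I expect this indexing to be the only genuinely fiddly part; once the right formulation is chosen, the inequality $\E_v[\text{count}]^2 \le \E_v[\text{count}^2]$ combined with the hypothesis delivers the bound directly, and everything else (degenerate-map removal, automorphism count) is routine and parallels the end of the Counting Lemma proof.
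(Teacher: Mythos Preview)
Your approach has a genuine gap at the key step: the claim that translating by $v$ preserves copies of $N$ is false for general $N$. A copy of $N$ in $\FF^n$ is determined by \emph{linear} (not affine) constraints among its elements, so if $c=\{c_1,\dots,c_m\}\subseteq\FF^n$ is a copy of $N$, the shift $c+v=\{c_1+v,\dots,c_m+v\}$ is typically \emph{not} a copy of $N$. Concretely, for $N=PG(1,2)$ a copy is a triple $\{a,b,a+b\}$ with $a+b+(a+b)=0$, but $(a+v)+(b+v)+(a+b+v)=v\neq 0$, so the shifted triple is not a triangle. (The same computation applied to basis vectors shows that $(w_1+v,\dots,w_\ell+v)$ does not parametrize a copy of $N$ either: the element $L_j(w+v)$ equals $L_j(w)+\epsilon_j v$ with $\epsilon_j\in\{0,1\}$ depending on the parity of $L_j$, not $L_j(w)+v$.) Your shift argument is valid only when $N$ is affine.

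Without shift-invariance, the Cauchy--Schwarz step collapses: there is no natural partition of pairs $(c,v)$ into orbits of size $2^n$ indexed by copies of $N$, so the ``denominator'' you need (``$\#\{\text{copies of }N\text{ in }\FF^n\}$'') has no way to enter the inequality. Indeed, already for $N=PG(1,2)$ the desired bound says that many triangles in $M$ force many copies of $BB(3,2)$, i.e.\ many $v$ with many triangles in $M\cap(M+v)$; this is a corners-type supersaturation statement that, like the triangle removal lemma, is not reachable by a single Cauchy--Schwarz and genuinely requires higher-order uniformity. The paper's proof accordingly goes through the full machinery: it first shows $M$ is $\alpha/|N|$-far from $N$-free, takes a $(\delta,\eta,r,d)$-regular partition with $d=2|N|-2$, argues (as in the Removal Lemma) that the reduced matroid $R_{\varepsilon,\zeta'}$ contains a copy of $N$, hence admits a homomorphism from $2N$, and then invokes the Counting Lemma for $2N$ to extract $\alpha'(2^n)^{r(N)+1}$ genuine copies.
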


\begin{proof}
Let $M$ be a matroid of rank $n$, and suppose that $M$ contains at least $\alpha (2^n)^{r(N)}$ copies of $N$. Any element $v\in M$ can be contained in at most $|N|(2^n)^{r(N)-1}$ copies of $N$, so if $M'$ is a subset of $M$ with $|M'| < \frac{\alpha}{|N|}2^n$, then $M\setminus M'$ contains a copy of $N$. So $M$ is $\frac{\alpha}{|N|}$-far from being $N$-free. Set $\zeta\coloneqq \frac{\alpha}{|N|}$.

Let $\zeta'=\frac{\zeta}{4}$, $d=2|N|-2$. By the Counting Lemma, there exist $\beta, \eta, \varepsilon, r, \nu$ such that if there exists a homomorphism from $2N$ to the reduced matroid $R=R_{\varepsilon,\zeta'}$ given by an $(\eta,r,d)$-regular partition of a matroid $M$ with corresponding factor $\cB$, and $r(M)\geq \nu(|\cB|)$, then $M$ contains at least $\beta \frac{(2^{r(M)})^{r(N)+1}}{\|\cB\|^{|2N|}}$ copies of $2N$. Fix such a choice of $\beta,\eta,\varepsilon,r,\nu$.

By Theorem~\ref{thm:decomp}, we have a $(\varepsilon \zeta'^{1/2},\eta,r,d)$-regular partition of $M$, $1_M=f_1+f_2+f_3$, whose corresponding factor $\cB$ has complexity at most $C$, where $C$ depends on only $\zeta',\varepsilon, \eta,r,d$, i.e. depends on only $\zeta, N$. Let $M'=M\cap R_{\varepsilon,\zeta'}$.

By the same argument as in the proof of the removal lemma, $M'$ contains a copy of $N$. Thus there is a homomorphism from $2N$ to $R_{\varepsilon,\zeta'}$.

So, if we take $n\geq \nu(C)$, by the Counting Lemma argument above, $M$ contains at least $\beta \frac{(2^{n})^{r(N)+1}}{\|\cB\|^{|2N|}}\geq \frac{\beta}{2^{2dC|N|}}(2^{n})^{r(N)+1}$ copies of $N$, so we are done by taking $\alpha' = \frac{\beta}{2^{2dC|N|}}$.

\end{proof}

\begin{rem}

We could also have directly gotten a (nondegenerate) copy of $2N$ in $R_{\varepsilon,\zeta'}$ by using an extension of the Chevalley-Warning Theorem to prime power moduli, such as Theorem~B in \cite{powerCW}.

\end{rem}

One particular special case of this result is of interest.

\begin{cor}\label{cor:doublePG}
For any $\alpha>0$ and positive integers $s,t$ there exists $\alpha'>0$ such that for sufficiently large $n$, if a matroid $M\subseteq \FF_2^n\setminus 0$ contains at least $\alpha (2^n)^{s}$ copies of $PG(s-1,2)$, then it contains at least $\alpha'(2^n)^{s+t}$ copies of $BB(s+t,s)$.
\end{cor}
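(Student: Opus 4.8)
The plan is to deduce this as a straightforward iterated application of the Doubling Lemma, after first pinning down the relevant members of the Bose--Burton family.

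The first step is a bookkeeping identity. Since $PG(s-1,2)=\FF_2^s\setminus\{0\}=\FF_2^s\setminus \FF_2^0 = BB(s,s)$, and since $2BB(n,c)=BB(n+1,c)$ whenever $n\geq c$, an immediate induction on $k$ gives $2^k PG(s-1,2)=BB(s+k,s)$ for every $k\geq 0$; in particular $2^t PG(s-1,2)=BB(s+t,s)$. Moreover $r\bigl(BB(s+k,s)\bigr)=s+k$, so each doubling raises the rank by exactly one, which is precisely the form of the conclusion in the Doubling Lemma.

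Next I would induct on $t$. Write $N_k = BB(s+k,s)$, so $N_0 = PG(s-1,2)$ and $2N_k = N_{k+1}$, and set $\alpha_0=\alpha$. The inductive hypothesis is that there is $\alpha_k>0$ so that for all sufficiently large $n$, every matroid $M\subseteq\FF^n\setminus 0$ with at least $\alpha (2^n)^s$ copies of $PG(s-1,2)$ contains at least $\alpha_k (2^n)^{s+k}$ copies of $N_k$. Applying the Doubling Lemma to the matroid $N_k$ (of rank $s+k$) with the constant $\alpha_k$ produces $\alpha_{k+1}>0$ such that, for all large $n$, having $\alpha_k (2^n)^{s+k}$ copies of $N_k$ forces at least $\alpha_{k+1}(2^n)^{s+k+1}$ copies of $2N_k=N_{k+1}$; chaining the two implications advances the induction from $k$ to $k+1$. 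After $t$ steps we take $\alpha'\coloneqq \alpha_t$, and the required ``sufficiently large $n$'' is the maximum of the finitely many thresholds encountered along the way.

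There is essentially no genuine obstacle: the only content is the identity $2^t PG(s-1,2)=BB(s+t,s)$ together with the observation that the rank increments match the Doubling Lemma's hypothesis, both of which are immediate. The one small point to handle carefully is that the size threshold for $n$ in each invocation of the Doubling Lemma may depend on the current constant $\alpha_k$ (and on $N_k$); but since $t$ is fixed and the chain $\alpha_0,\dots,\alpha_t$ is determined before $n$ is chosen, taking the maximum over these $t$ thresholds suffices.
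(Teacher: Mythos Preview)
Your proposal is correct and matches the paper's approach: the paper presents this corollary immediately after the Doubling Lemma as a special case, implicitly relying on the identity $2^k PG(s-1,2)=BB(s+k,s)$ (indeed, the paper notes $2BB(n,c)=BB(n+1,c)$ right in the definition of the double) and iterated application. Your write-up simply makes explicit the induction and the handling of the finitely many thresholds for $n$, which the paper leaves to the reader.
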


Corollary~\ref{cor:doublePG} should be compared with the following analogous graph-theoretic lemma, used in the proof of results on the chromatic threshold in \cite{chromThresh}.

\begin{lem}[{\cite[Lemma~7]{doublingKn}}]
For every $r, s$ and $\varepsilon > 0$ there exists $\delta=\delta_{r,s}(\varepsilon) > 0$ such that the following holds for sufficiently large $n$. If the $n$-vertex graph $G$ contains at least $\varepsilon n^r$ copies of $K_r$, then $G$ contains at least  $\delta_{r,s}(\varepsilon)n^{rs}$ copies of the Turán graph $K_r(s)$.
\end{lem}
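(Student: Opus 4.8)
The plan is to reduce the statement to a supersaturation result for multipartite hypergraphs and prove the latter by induction on $r$. Form the $r$-uniform hypergraph $\mathcal{H}$ on $V(G)$ whose edges are exactly the vertex sets of the $K_r$'s of $G$, so that $e(\mathcal{H})\geq \varepsilon n^r/r!$. Call a map $\phi\colon [r]\times[s]\to V(G)$ a \emph{configuration} if for every transversal $\psi\colon[r]\to[s]$ the $r$-set $\{\phi(i,\psi(i)):i\in[r]\}$ is an edge of $\mathcal{H}$. If all $rs$ values of a configuration $\phi$ are distinct, then the classes $A_i=\{\phi(i,k):k\in[s]\}$ span a copy of $K_r(s)$ in $G$: any pair $\{\phi(i,k),\phi(i',k')\}$ with $i\neq i'$ lies inside the edge of $\mathcal{H}$ coming from a transversal $\psi$ with $\psi(i)=k$, $\psi(i')=k'$, hence is an edge of $G$. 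Conversely each copy of $K_r(s)$ in $G$ arises from exactly $r!(s!)^r$ such configurations (the partition into colour classes is unique, and one chooses which block maps to which class and how within each block). So it suffices to prove: every $r$-uniform hypergraph on $n$ vertices with at least $\varepsilon' n^r$ edges has at least $c(\varepsilon',r,s)\,n^{rs}$ configurations; the lemma then follows by discarding the $O(n^{rs-1})$ degenerate (repeated-vertex) configurations and dividing by $r!(s!)^r$, for $n$ sufficiently large.

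For the hypergraph claim I induct on $r$. The base case $r=1$ is immediate, since there a configuration is just a word of length $s$ in the vertices covered by edges, giving at least $(\varepsilon' n)^s$ of them. For the inductive step, associate to each tuple $(v_1,\dots,v_s)\in V^s$ its common link $\mathcal{L}(v_1,\dots,v_s)$, the $(r-1)$-uniform hypergraph of all $(r-1)$-sets $f$ with $f\cup\{v_j\}\in E(\mathcal{H})$ for every $j$. Double counting gives $\sum_{(v_1,\dots,v_s)} e(\mathcal{L}(v_1,\dots,v_s))=\sum_{|f|=r-1}\deg_{\mathcal{H}}(f)^s$, and since $\sum_f\deg_{\mathcal{H}}(f)=r\,e(\mathcal{H})\geq \varepsilon' r\,n^r$ while the number of $(r-1)$-sets $f$ is at most $n^{r-1}$, convexity of $t\mapsto t^s$ yields $\sum_f\deg_{\mathcal{H}}(f)^s\geq c_1 n^{r-1+s}$ for some $c_1=c_1(\varepsilon',r,s)>0$. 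An averaging step then produces at least $c_2 n^s$ tuples $(v_1,\dots,v_s)$ whose link has at least $c_2 n^{r-1}$ edges; by induction each such link contains at least $c_3 n^{(r-1)s}$ configurations, and using any such tuple as the $r$-th block of any of those configurations produces a configuration of $\mathcal{H}$, with distinct (tuple, sub-configuration) pairs giving distinct configurations. Multiplying the two bounds gives at least $c_2 c_3 n^{rs}$ configurations, completing the induction.

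The only genuinely delicate point is the bookkeeping rather than any new idea: one must confirm that the degenerate configurations really do number only $O(n^{rs-1})$ (a repeated vertex can only occur within a single block, since two vertices in different blocks lie in a common transversal and hence must be distinct, so there are at most $rs^2 n^{rs-1}$ of them), so that subtracting them off leaves a positive fraction, and one must track that the constants produced by the induction remain positive and depend only on $\varepsilon,r,s$. Everything else is routine convexity and averaging. (The middle step is precisely a supersaturated form of Erd\H{o}s's theorem on complete $r$-partite $r$-uniform hypergraphs, so one could alternatively invoke the Erd\H{o}s--Simonovits supersaturation principle together with Erd\H{o}s's extremal bound; the short self-contained induction above seems cleaner in this context.)
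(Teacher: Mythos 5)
Your argument is essentially correct, and it is worth noting at the outset that the paper itself does not prove this statement: it is quoted from the literature (Lemma~7 of the cited source) purely as the graph-theoretic counterpart of the matroid Doubling Lemma, so there is no internal proof to compare against. Your route is the classical supersaturation one: pass to the $r$-uniform hypergraph of $K_r$-vertex-sets, count ``configurations'' (maps $[r]\times[s]\to V$ all of whose transversals are edges), and prove the supersaturated Erd\H{o}s box-theorem bound by induction on $r$ via the identity $\sum_{(v_1,\dots,v_s)} e(\mathcal{L}(v_1,\dots,v_s))=\sum_f \deg(f)^s$, convexity, and averaging; the bookkeeping you flag (degenerate configurations only $O(n^{rs-1})$ because a repeat across blocks would collapse a transversal below $r$ elements, and each copy of $K_r(s)$ corresponding to exactly $r!(s!)^r$ nondegenerate configurations since the multipartition of a copy is determined by its edge set) checks out, and all constants depend only on $\varepsilon,r,s$. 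The instructive contrast is with how the paper proves the analogous matroid statement (its Doubling Lemma): there, many copies of $N$ force $M$ to be far from $N$-free, a strong higher-order decomposition is applied, and the Counting Lemma converts a homomorphism of $2N$ into the reduced matroid into many copies of $2N$. That machinery is needed because no elementary link/convexity argument of your type is available for general binary matroids, whereas for graphs your elementary supersaturation proof gives the result with explicit, regularity-free constants.
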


Along the same lines, our Counting Lemma gives a short proof of the Geometric Erd\H{o}s-Stone theorem, Theorem~\ref{thm:gES}, using the Bose-Burton theorem, Theorem~\ref{thm:BB}, analogous to the proof of the Erd\H{o}s-Stone theorem using Turán's theorem.

\begin{proof}[Proof of Theorem~\ref{thm:gES}]
Let $r(M)=n$, $\chi(N)=c$, and $r(N)=\ell$, and suppose $|M|\geq (1-2^{1-c}+\zeta)2^n$. It suffices to show that if $n$ is sufficiently large, $M$ must contain a copy of $BB(\ell,c)$, so without loss of generality $N=BB(\ell,c)$.

Let $\zeta'=\frac{\zeta}{4}$, and let $d=|N|-2$. By the Counting Lemma, there exist $\beta, \eta, \varepsilon, r, \nu$ such that if there is a homomorphism from $N$ to the reduced matroid $R=R_{\varepsilon,\zeta'}$ given by an $(\eta,r,d)$-regular partition of a matroid $M$ with corresponding factor $\cB$, and $r(M)\geq \nu(|\cB|)$, then $M$ contains at least $\beta \frac{(2^{n})^{l}}{\|\cB\|^{|N|}}>0$ copies of $N$. Fix such a choice of $\beta,\eta,\varepsilon,r,\nu$.

By Theorem~\ref{thm:decomp}, we have a $(\varepsilon \zeta'^{1/2},\eta,r,d)$-regular partition of $M$, $1_M=f_1+f_2+f_3$, whose corresponding factor $\cB$ has complexity at most $C$, where $C$ depends on only $\zeta',\varepsilon, \eta,r,d$, i.e. depends on only $\zeta, N$. Let $M'=M\cap R_{\varepsilon,\zeta'}$. As in the proof of the Removal Lemma, we see that $E_x{|1_M-1_{M'}|} < \frac{\zeta}{2},$ so that $|M'|\geq (1-2^{1-c}+\frac{\zeta}{2})2^n$. By the Bose-Burton theorem, $M'\subseteq R_{\varepsilon,\zeta'}$ contains a copy of $PG(c-1,2)$. So there is a homomorphism from $N=2^{\ell-c}PG(c-1,2)$ to $R_{\varepsilon,\zeta'}$, and thus the Counting Lemma gives us at least one copy of $N$ in $M$, as desired.
\end{proof}

\section{Applications to the Critical Threshold Problem} \label{sec:work}

The strong decomposition theorem and our Counting Lemma allow us to extend the arguments using Green's regularity lemma in \cite{main} and \cite{geelenOdd} to address more general cases of Conjecture~\ref{conj:main}. To illustrate the approach we take, we first extend the argument in \cite{main} to the case of $N_{\ell,2,1}$. This case is simple enough that it suffices to only use Green's regularity lemma, but we phrase it in terms of the $d=1$ case of the strong decomposition theorem to highlight the similarity with our approach to a more general case, in the next section.

Recall that for $\ell\geq c+k-1$, $c>1$, $N_{\ell,c,k}$ is defined to be the rank $\ell$ matroid consisting of the union of $BB(\ell,c-1)$ with $k$ linearly independent vectors contained inside the complement of $BB(\ell,c-1)$ in $\FF_2^\ell$. In particular, $N_{\ell,2,1}$ is the union of an affine geometry and a single other point.

\subsection{Verifying the Conjecture for \texorpdfstring{$N_{\ell,2,1}$}{N\_(l,2,1)}}
\label{sec:vernm1}
\begin{prop}
\label{prop:nm1}
$\theta(N_{\ell,2,1})=\frac{1}{4}.$
\end{prop}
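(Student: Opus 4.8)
I would prove the two inequalities $\theta(N_{\ell,2,1})\geq \tfrac14$ and $\theta(N_{\ell,2,1})\leq \tfrac14$ separately. The lower bound is immediate from Theorem~\ref{thm:lower}: writing $N=N_{\ell,2,1}$, we have $\chi(N)=2$, and I would first check which of the three cases applies. The $(c-1)=1$-codimensional flat used to build $N$ meets $N$ only in the single extra vector $w$, which is by itself linearly independent, so the ``$i=2$'' case is excluded. Also $N$ contains the triangle $\{a,a+w,w\}$ for any $a\in AG(\ell-1,2)\subseteq N$, a $3$-element odd circuit, which cannot contain $\geq 4$ elements lying outside any subspace; hence $N$ is not $2$-near-independent and the ``$i=4$'' case is excluded. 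So $i=3$, and Theorem~\ref{thm:lower} gives $\theta(N)\geq 1-3\cdot 2^{-2}=\tfrac14$.

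\textbf{Upper bound, setup.} It remains to show that for every $\alpha>\tfrac14$ there is a finite $c$ as in the definition of $\theta$. Fix $\alpha=\tfrac14+\zeta$, let $M$ of rank $n$ (large) satisfy $|M|\geq\alpha 2^n$, and assume $N\not\subseteq M$; I will bound $\chi(M)$. Following the pattern of the applications in Section~\ref{sec:apply}, I would first invoke the Counting Lemma with $N'=PG(1,2)$, $\zeta'=\zeta/4$, and $d=1$ (legitimate since $d=1\geq|PG(1,2)|-2$), obtaining parameters $\beta,\varepsilon_0,\eta,r,\nu$, and then apply the $d=1$ instance of Theorem~\ref{thm:decomp} (essentially Green's regularity lemma) with these $\eta,r$ and a suitable $\delta$. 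This produces $1_M=f_1+f_2+f_3$ with a degree-$1$ factor $\cB$ whose atoms are the cosets of a subspace $W\leq\FF^n$ of codimension bounded in terms of $\zeta$, and a reduced matroid $R=R_{\varepsilon,\zeta/4}$ that is a union of cosets of $W$. Since $R$ is $W$-invariant it projects to a matroid $\overline R=R/W$ in the bounded-dimensional space $V=\FF^n/W$, and a homomorphism from a matroid into $R$ is the same as one into $\overline R$. The whole argument then splits on whether $\overline R$ contains a triangle.

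\textbf{Triangle case.} If $\overline R$ contains a triangle, there is a homomorphism $PG(1,2)\to R$, so the Counting Lemma yields $\beta(2^n)^2/\|\cB\|^2 \gg (2^n)^2$ copies of $PG(1,2)$ in $M$ (using $\|\cB\|\leq 2^{C}$). For $\ell=2$ this already gives $N=PG(1,2)\subseteq M$; for $\ell\geq 3$, Corollary~\ref{cor:doublePG} with $s=2$, $t=\ell-2$ upgrades these to $\Omega((2^n)^\ell)$ copies of $\BB(\ell,2)=2^{\ell-2}PG(1,2)$ in $M$. Since $\chi(N)=2$ forces $N\subseteq\BB(\ell,2)$ (two affine hyperplanes covering $N$ meet in a codimension-$2$ flat), we conclude $N\subseteq M$, contradicting our assumption. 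So under $N\not\subseteq M$, $\overline R$ is triangle-free.

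\textbf{Triangle-free case, and the main obstacle.} The error atoms carry $<(\zeta/4)2^n$ of $M$ (by $\|f_3\|_2\leq\delta$ small) and the low-density atoms carry $<(\zeta/4)2^n$, so the atoms underlying $\overline R$ still hold more than $(1/4+\zeta/2)2^n$ of $M$; hence $\overline R$ has density $>\tfrac14$ in $V$. A triangle-free subset of an $\FF_2$-space is sum-free, and here I would invoke the \emph{stability form of the Bose--Burton theorem at critical number $2$}: a sum-free subset of $\FF_2^t$ of density exceeding $\tfrac14$ is disjoint from some codimension-$2$ subspace — exactly the sort of statement Proposition~\ref{prop:extBB} is built to supply. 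Applying it to $\overline R$ and pulling back gives a codimension-$2$ subspace $U\leq\FF^n$ disjoint from every dense good atom, so $\chi(M\setminus U)\leq\chi(\FF^n\setminus U)=2$ while $M\cap U$ carries only $O(\zeta)$ density; a finite descent (re-running the dichotomy inside the bounded-codimension flat $U$, or appealing to the structural lemma again) then bounds $\chi(M\cap U)$, yielding $\chi(M)\leq c(\zeta,\ell)$. The hard part is precisely this last step: the non-good atoms collectively contain only a tiny fraction of $M$, yet a priori could sit on a high-rank flat and so carry unbounded critical number; ruling this out is where both the sum-free stability input and the exact value of the constant $i=3$ genuinely enter, and it is the portion of the proof that requires the most care.
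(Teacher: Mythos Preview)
Your triangle case is fine (and the $\BB(\ell,2)$ shortcut via Corollary~\ref{cor:doublePG} is a legitimate alternative to what the paper does there), but the triangle-free case has a genuine gap, and the gap stems from choosing the wrong case split.

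The paper does \emph{not} split on whether $\overline R$ contains a triangle. It introduces a second reduced matroid $D=R_{\varepsilon,\frac12+\zeta}$, the union of the atoms with $M$-density at least $\frac12+\zeta$, and splits on whether $D$ is empty. If $D$ is empty, then every good atom has density below $\frac12+\zeta$, and the inequality
\[
\left(\tfrac14+\delta\right)2^n \leq |M| < \zeta 2^n + |R|\left(\tfrac12+\zeta\right)+(2^n-|R|)\zeta
\]
forces $|R|>\tfrac12\,2^n$, not merely $|R|>\tfrac14\,2^n$. Bose--Burton then \emph{guarantees} a triangle in $R$, so there is no triangle-free branch to worry about. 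If $D$ is nonempty, the paper picks an atom $b\subseteq D$, uses $\chi(M)>C$ to find $h\in M$ in the zero atom, observes that translation by $h$ fixes $b$ so $M_h=\{w:w,w+h\in M\}$ has density $\geq\zeta$ in $b$, and applies Binary Density Hales--Jewett to get an $AG(\ell-1,2)\subseteq M_h$; then $A\cup(A+h)\cup\{h\}\supseteq N_{\ell,2,1}$. The same $M_h$+DHJ mechanism is used in the other case as well, after noting that many triangles in $M$ force some $h\in M$ with $M_h$ of positive density.

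Your split only yields $|\overline R|>\tfrac14\,|V|$, and sum-free sets of density strictly between $\tfrac14$ and $\tfrac12$ are abundant, so you really do face a triangle-free branch. The stability statement you attribute to Proposition~\ref{prop:extBB} (``a sum-free set of density $>\tfrac14$ misses a codimension-$2$ subspace'') is not what that proposition says, and even granting such a flat $U$, your ``finite descent'' does not close: $M\cap U$ having small density places no bound whatsoever on $\chi(M\cap U)$. You correctly flag this as the hard part, but you do not resolve it, and with this case split it cannot be resolved by these tools alone. The fix is exactly the paper's idea: track the high-density threshold $\tfrac12+\zeta$, so that either you get $|R|>\tfrac12\,2^n$ (hence a triangle), or you get a single dense coset on which the $M_h$ construction and DHJ apply directly.
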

\begin{proof}
Fix $\delta>0$. 

Let $\zeta=\frac{\delta}{2}$, and let $d=1$. By the Counting Lemma, there exist $\beta, \eta, \varepsilon, r, \nu$ such that if the reduced matroid $R=R_{\varepsilon,\zeta}$ given by an $(\eta,r,d)$-regular partition of a matroid $M$ with corresponding factor $\cB$ contains a copy of $PG(1,2)$ and $r(M)\geq \nu(|\cB|)$, then $M$ contains at least $\beta \frac{(2^{r(M)})^{2}}{\|\cB\|^{3}}$ copies of $PG(1,2)$. Fix such a choice of $\beta,\eta,\varepsilon,r,\nu$.

Let $M$ be a matroid of rank $n\geq \nu(|\cB|)$ with $|M|\geq (\frac{1}{4}+\delta)2^n$. By Theorem~\ref{thm:decomp}, we have an $(\varepsilon \zeta^{1/2},\eta,r,1)$-regular partition of $M$, $1_M=f_1+f_2+f_3$, whose corresponding factor $\cB$ has complexity at most $C$, where $C$ depends only on $\zeta,\varepsilon,\eta,r,d$, i.e. depends only on $\delta$. Let $R=R_{\varepsilon,\zeta}$ and let $D=R_{\varepsilon,\frac{1}{2}+\zeta}$.
We have two cases, depending on whether $D$ is nonempty.

\emph{Case 1}: $D$ is nonempty; that is, for some atom $b$ of $\cB$, $|b\cap M|\geq (\frac{1}{2}+\zeta)|b|$ and $\E[|f_3(x)|^2\mid x\in b]\leq \varepsilon^2$. Since $\cB$ is a factor defined by linear polynomials, if $h$ is any element of the atom $b_0$ containing $0$, then shifting by $h$ preserves each atom. If $\chi(M)>C$, then there exists such an element $h$ in $M\cap b_0$. Consider the submatroid $M_h=\{w\in M \mid w+h\in M\}$. Since $|M\cap b|\geq (\frac{1}{2}+\zeta)|b|$, $|M_h\cap b|\geq \zeta |b|\geq \frac{\zeta}{\|\cB\|} 2^n$. By the Density Hales-Jewett Theorem, for sufficiently large $n$, $M_h$ contains a copy $A$ of $AG(m-1,2)$. Then $A\cup (A+h)\cup \{h\}$ contains a copy of $N_{\ell,2,1}$. So, either $\chi(M)\leq C$ or $M$ contains a copy of $N_{\ell,2,1}$, as desired.

\emph{Case 2}: $D$ is empty.

Then for each atom $b$ of $\cB$, either $\E[|f_3(x)|^2\mid x\in b]> \varepsilon^2$ or $|b\cap M| < \frac{1}{2}+\zeta$. Since $\|f_3\|_{L^2}\leq \varepsilon\zeta^{1/2}$, the former is true for less than a fraction $\zeta$ of the atoms $b$. We can use this to give a lower bound on the size of $R$. Indeed, we have
\[(\frac{1}{4}+\delta)2^n\leq |M|< (\zeta 2^n + |D|) \cdot 1 + (|R|-|D|)\cdot (\frac{1}{2}+\zeta) + (2^n-|R|)\cdot \zeta,\]
so $|R|> (\frac{1}{2}+2(\delta-2\zeta)) 2^{n}>\frac{1}{2}2^n$.

By Theorem~\ref{thm:BB}, $R$ contains a copy of $PG(1,2)$. So, by the Counting Lemma, $M$ contains at least $\beta \frac{(2^{n})^{2}}{\|\cB\|^{3}}$ copies of $PG(1,2)$. Then some element $h$ is part of at least $\beta \frac{2^{n}}{\|\cB\|^{3}}$ copies of $PG(1,2)$, so $M_h$, as defined in Case 1, has density at least $\frac{\beta}{\|\cB\|^{3}}$. Applying the Density Hales-Jewett Theorem again gives a copy of $AG(\ell-1,2)$ in $M_h$, and thus a copy of $N_{\ell,2,1}$ in $M$, as desired.
\end{proof}

\subsection{An Approach for \texorpdfstring{$N_{\ell,c,1}$}{N\_(l,c,1)}}

The case $c>2$ is more difficult to address because using the Counting Lemma now requires polynomial factors of higher degrees, with which the construction of $M_h$ from before does not interact in as simple a manner. As will be seen, applying our techniques from before to this case leaves us with the problem of showing that a certain polynomial factor $\cB_h$ corresponding to $M_h$ can be chosen to be sufficiently regular. Ensuring this regularity seems out of the reach of our methods as they stand; nevertheless, we give an outline of our approach to highlight some of the new ideas at hand.

\subsubsection{Some helpful results}

Before we begin, we prove two relevant results that may be of use in broader contexts as well. The first is a simple lemma showing that the Gowers norm of a function cannot increase when restricted according to the output of a nonclassical polynomial (of an appropriate degree).

\begin{lem}
\label{lem:gowerspoly}
Let $s\geq 1$. For any (nonclassical) polynomial $P$ of degree at most $s$, constant $\beta \in \TT$, and function $g:\FF_2^n \rightarrow \CC$, we have
\[\|g1_{P(x)=\beta}\|_{U^{s+1}}\leq \|g\|_{U^{s+1}}.\]
\end{lem}

\begin{proof}
Let $P$ have depth $k$. We have
\[g(x)1_{P(x)=\beta}(x)=\frac{1}{2^{k+1}}\sum_{\lambda=0}^{2^{k+1}-1}g(x)\expo{\lambda(P(x)-\beta)}.\]
Since $s+1\geq 2$, the triangle inequality holds for the Gowers norm, so
\begin{align*}
    \|g1_{P(x)=\beta}\|_{U^{s+1}}&=\left\|\frac{1}{2^{k+1}}\sum_{\lambda=0}^{2^{k+1}-1}g(x)\expo{\lambda(P(x)-\beta)}\right\|_{U^{s+1}} \\
    &\leq \frac{1}{2^{k+1}}\sum_{\lambda=0}^{2^{k+1}-1}\left\|g(x)\expo{\lambda(P(x)-\beta)}\right\|_{U^{s+1}}= \|g\|_{U^{s+1}},
\end{align*}
since $\|g\cdot \expo{P(x)}\|_{U^{s+1}}=\|g\|_{U^{s+1}}$ when $P$ has degree $\leq s$.
\end{proof}

The second result is the following proposition, first stated in the introduction.

\begin{repprop}{prop:extBB}
Let $n,c$ be positive integers, let $k_1,\dots,k_n$ be nonnegative integers, and let $G = \bigoplus_{i=1}^n \frac{1}{2^{k_i+1}}\ZZ/\ZZ$. Let $H$ be a subgroup of $G$. Let $M_1,\dots,$ $M_{2^c-1}$ be subsets of $G$. Then there exist $H_1,\dots,H_c\in G/H$, cosets of $H$, such that for $1\leq i\leq c$,
\begin{equation}
\label{eq:extBB}
    \frac{1}{|H|}\sum_{x\in\{0,1\}^{i-1}} \left | M_{2^{i-1}+\sum_{j=1}^{i-1} x_j 2^{j-1}}\cap \left( H_i + \sum_{j=1}^{i-1} x_j H_j \right ) \right | \geq \sum_{j=2^{i-1}}^{2^i-1}\frac{|M_j|}{|G|}.\tag{$*$}
\end{equation}
\end{repprop}

\begin{proof}
We will choose $H_1,\dots,H_c$ in order, greedily. For $1\leq i_0\leq c$, suppose $H_1,\dots,H_{i_0-1}$ have already been chosen such that \eqref{eq:extBB} holds for $1\leq i \leq i_0-1$.
Consider a uniformly random choice of $H_{i_0}\in G/H$. Taking an expectation gives

\begin{align*}
    & \E_{H_{i_0}\in G/H}\frac{1}{|H|}\sum_{x=(x_1,\dots,x_{i_0-1})\in\{0,1\}^{i_0-1}} \left | M_{2^{i_0-1}+\sum_{j=1}^{i_0-1} x_j 2^{j-1}}\cap \left( H_{i_0} + \sum_{j=1}^{i_0-1} x_j H_j \right ) \right | \\
    &= \frac{1}{|H|}\sum_{x=(x_1,\dots,x_{i_0-1})\in\{0,1\}^{i_0-1}} \E_{H'\in G/H}\left | M_{2^{i_0-1}+\sum_{j=1}^{i_0-1} x_j 2^{j-1}}\cap H' \right | \\
    &= \sum_{j=2^{i_0-1}}^{2^{i_0}-1}\frac{|M_j|}{|G|},
\end{align*}
so for some choice of $H_{i_0}$, the inequality \eqref{eq:extBB} holds. Continuing in this fashion, we can successfully pick $H_i$ for all $i\in [1,c]$, as desired.

\end{proof}

As noted in Section~\ref{sec:matrBack}, Theorem~\ref{thm:BB} immediately follows from this result upon setting $G=\FF_2^{r(M)}$, $H$ the trivial subgroup, and $M_1=\cdots=M_{2^c-1}=M$. We can get more control over the copy of $PG(c-1,2)$ thus obtained by applying Proposition~\ref{prop:extBB} iteratively. Given a matroid $M$ with $|M|>2^{r(M)}(1-2^{1-c})$ and a subspace $W$ of $V=\FF_2^{r(M)}$, applying the proposition with $G=V$, $H=W$, $M_1=\cdots=M_{2^c-1}=M$ yields cosets $W_1,\dots,W_{c}$ of $W$ such that, for $1\leq i\leq c$,
\[
\frac{1}{|W|}\sum_{x\in\{0,1\}^{i-1}} \left | M\cap \left( W_i + \sum_{j=1}^{i-1} x_j W_j \right ) \right | \geq 2^{i-1}\frac{|M|}{|V|}.
\]
Now pick representatives $h_1,\dots,h_{c}$ for the cosets $W_1,\dots,W_{c}$. For nonzero $x\in \{0,1\}^{c}$, let $M_{\sum_{j=1}^{c} x_j 2^{j-1}}=(M\cap (\sum_{j=1}^{c} x_j W_j)) - \sum_{j=1}^{c} x_j h_j$. Applying the proposition again with $G=W$, $H$ the trivial subgroup, and the $\{M_j\}_{j=1}^{2^{c}-1}$ just defined, we get points $e_1,\dots,e_{c}\in W$ such that for $1\leq i\leq c$,
\begin{equation}
\label{eq:extBBapp}
    \sum_{x\in\{0,1\}^{i-1}} \left | M_{2^{i-1}+\sum_{j=1}^{i-1} x_j 2^{j-1}}\cap \left \{ e_i + \sum_{j=1}^{i-1} x_j e_j \right \} \right | \geq \sum_{j=2^{i-1}}^{2^i-1}\frac{|M_j|}{|W|}. \tag{$\dagger$}
\end{equation}
Since $|M_{2^i+\sum_{j=1}^{i-1}x_j 2^{j-1}}|=\left|M\cap \left( W_i + \sum_{j=1}^{i-1} x_j W_j \right )\right|$, by the first application of Proposition~\ref{prop:extBB}, the right hand side of~\eqref{eq:extBBapp} is
\[
\sum_{j=2^{i-1}}^{2^i-1}\frac{|M_j|}{|W|}=2^{i-1}\frac{|M|}{|V|}>2^{i-1}-1.
\]
On the other hand, by the definitions of the $M_j$, the left hand side of~\eqref{eq:extBBapp} is equal to
\[
\left |\left\{x\in \{0,1\}^{i-1} \mid (h_i+e_i) + \sum_{j=1}^{i-1} x_j (h_j + e_j) \in \left(M\cap \left(W_i+\sum_{j=1}^{i-1} x_j W_j\right)\right)\right\}\right |.
\]
This quantity must then equal $2^{i-1}$. Thus the elements $\sum_{j=1}^{c} x_j (h_j + e_j)$ for nonzero $x\in \{0,1\}^{c}$ form a copy of $PG(c-1,2)$ in $M$ such that the cosets $H_1,\dots,H_{2^c-1}$ they lie in satisfy
\[
\frac{1}{2^{i-1}}\sum_{j=2^{i-1}}^{2^i-1} \frac{|S\cap H_j|}{|W|}\geq \frac{|M|}{|V|}.
\]

Of relevance to us is the fact that the same argument can be applied when we consider the space of atoms of a polynomial factor instead of the space containing the matroid itself. We will discuss the details of this case where it comes up in the outline below.

\subsubsection{Outline for the case of \texorpdfstring{$N_{\ell,c,1}$}{N\_(l,c,1)}}

We now outline our approach to the $N_{\ell,c,1}$ case of the conjecture. Fixing $c\geq 3$, we wish to show that $\theta(N_{\ell,c,1})=1-3\cdot 2^{-c}$. We begin as in the proof of Proposition~\ref{prop:nm1}. Let $N=N_{\ell,c,1}$, fix $\delta>0$, and let $\zeta=\frac{\delta}{2}$, $d=|N|-2$. By the Counting Lemma, there exist $\beta_1, \eta_1, \varepsilon_1, r_1, \nu_1$ such that if the reduced matroid $R=R_{\varepsilon_1,\zeta}$ given by an $(\eta_1,r_1,d)$-regular partition of a matroid $M$ with corresponding factor $\cB$ contains a copy of $N$ and $r(M)\geq \nu_1(|\cB|)$, then $M$ contains at least $\beta_1 \frac{(2^{r(M)})^{r(N)}}{\|\cB\|^{|N|}}$ copies of $N$. Let $\beta_2,\eta_2,\varepsilon_2,r_2,\nu_2$ be parameters to be chosen later, and define $\beta=\min(\beta_1,\beta_2)$, $\eta(C)=\min(\eta_1(C),\eta_2(C))$, $\varepsilon=\min(\varepsilon_1,\varepsilon_2)$, $r(C)=\max(r_1(C),r_2(C))$, $\nu(C)=\max(\nu_1(C),\nu_2(C))$. Let $\varepsilon'=\frac{1}{2}\varepsilon \zeta^{1/2}$.

Let $M$ be a matroid of rank $n\geq \nu(|\cB|)$ with $|M|\geq (1-3\cdot 2^{-c}+\delta)2^n$. By Theorem~\ref{thm:decomp}, we have a $(\frac{1}{2}\varepsilon'\zeta^{1/2},\eta,r,1)$-regular partition of $M$, $1_M=f_1+f_2+f_3$, whose corresponding factor $\cB$ has complexity at most $C$, where $C$ depends only on $\zeta,\varepsilon,\eta,r,d$, i.e. depends only on $\delta, |N|$. Let $R=R_{\varepsilon',\zeta}$ and let $D=R_{\varepsilon',\frac{1}{2}+\zeta}$. 
As before, we have two cases, depending on the density of $D$.

\emph{Case 1}: $|D|\leq (1-2^{2-c}+\zeta)2^n$.

This case proceeds as in Case~2 in the proof of Proposition~\ref{prop:nm1}. For each atom $b$ of $\cB$, either $\E[|f_3(x)|^2\mid x\in b]> \varepsilon'^2$, $b\subseteq D$, or $|b\cap M| < \frac{1}{2}+\zeta$. By the same argument as in the proof of Proposition~\ref{prop:nm1}, we obtain the lower bound $|R|>(1-2^{1-c}+\zeta)2^n$. By the Bose-Burton Theorem, $R$ contains a copy of $PG(c-1,2)$, and thus there exists a homomorphism from $2^{\ell-c} PG(c-1,2)$ to $R$. Since $N$ is a submatroid of $2^{\ell-c} PG(c-1,2)$, there is a homomorphism from $N$ to $R$. So, since $R\subseteq R_{\varepsilon,\zeta}$, by the Counting Lemma, $M$ contains a copy of $N$, as desired.

\emph{Case 2}: $|D|> (1-2^{2-c}+\zeta)2^n$.

This case turns out to be more difficult than before. Let $\Delta_h P(x)=P(x+h)-P(x)-P(h)$ for any $x,h\in V= \FF_2^n$ and nonclassical polynomial $P$. Assume without loss of generality that $|\cB|=C$. Given an element $h\in V$, let $\cB_h$ be the factor defined by the polynomials $P_1,\dots,P_C,\Delta_h P_1,\dots,\Delta_h P_C$. Note that the indicator function for $D\cap (D+h)$ is constant on each atom of $\cB_h$. In the case of $N_{\ell,2,1}$, we simply had $\cB_h=\cB$.

Let $N^*=BB(\ell-1,c-1)$, so $N=N^*\cup (N^*+v)\cup \{v\}$ for some element $v$. We represent $N^*$ as a system of linear forms on $\ell-1$ variables, $\{L_1,\dots,L_m\}$, where without loss of generality $\{L_1,\dots,L_{2^{c-1}-1}\}$ forms a copy of $PG(c-2,2)$. Let $M_h=\{w\in M \mid w+h\in M\}$. For an appropriately chosen $h$, we seek a lower bound for the number of copies of $N^*$ contained in $M_h$. If $h\in M$, such a lower bound will then yield a copy of $N$ in $M$.

Let $g(x)=f(x)f(x+h)$ be the indicator function for $M_h$, so the expression we wish to give a lower bound for is
\[\E_{X\in (\FF_2^n)^{\ell-1}}\left[ \prod_{j=1}^m g(L_j(X))\right].\]

Note that
\begin{align*}
    f(x)f(x+h)&=(1-f(x))(1-f(x+h))+(f(x)+f(x+h)-1) \\
    &= ((1-f(x))(1-f(x+h))+f_1(x)+f_1(x+h)-1) \\
    & + (f_2(x)+f_2(x+h))+(f_3(x)+f_3(x+h)).
\end{align*}

Let $g_1(x)=(1-f(x))(1-f(x+h))+f_1(x)+f_1(x+h)-1$, $g_2(x)=f_2(x)+f_2(x+h)$, and $g_3(x)=f_3(x)+f_3(x+h)$. So, $g(x)=g_1(x)+g_2(x)+g_3(x)$.

As in the proof of the Counting Lemma, we can obtain a lower bound by only counting within certain ``good'' atoms of $\cB_h$. Specifically, suppose we have a point $X_0\in (\FF_2^n)^{\ell-1}$ such that $L_1(X_0),\dots,L_m(X_0)$ are in atoms $\tilde b_1,\dots,\tilde b_m$ of $\cB_h$ contained in $D\cap (D+h)$ such that $\E[|f_3(x)|^2 \mid x\in \tilde b_j],\E[|f_3(x+h)|^2 \mid x\in \tilde b_j]$ are at most $\varepsilon^2$ for $1\leq j\leq m$. Then

\begin{align*}
    & \E_{X}\left[ \prod_{j=1}^m g(L_j(X))\right]\geq \E_{X}\left[ \prod_{j=1}^m g(L_j(X)) 1_{[\cB_h(L_j(X))=\tilde b_j]}\right]\\
    &= \sum_{(i_1,\dots,i_m)\in \{1,2,3\}^m}\E_X \left[ \prod_{j=1}^m g_{i_j}(L_j(X)) 1_{[\cB_h(L_j(X))=\tilde b_j]} \right].
\end{align*}

The terms where $i_j=2$ for some $j$ can be handled relatively easily. Repeated application of Lemma~\ref{lem:gowerspoly}, followed by the triangle inequality, gives that
\[\|g_2(x) 1_{[\cB_h(x)=\tilde b_j]}\|_{U^{d+1}}\leq \|g_2\|_{U^{d+1}}\leq 2\|f_2\|_{U^{d+1}},\]
\noindent for $1\leq j\leq m$. So, since $\max_x |g_i(x)|\leq 2$ for $1\leq i\leq 3$, applying Lemma~\ref{lem:gowerscount} on $\{\frac{1}{2}g_{i_j}\}_{j=1}^m$ gives

\begin{align*}
    & \left|\E_{X}\left[\prod_{j=1}^m g_{i_j}(L_j(X))1_{[\cB_h(L_j(X))=\tilde b_j]}\right]\right|\leq 2^m \min_{1\leq j\leq m} \left\|\frac{1}{2} g_{i_j}(x)1_{[\cB_h(x)=\tilde b_j]}\right\|_{U^{d+1}} \\
    & \leq 2^m \left\|\frac{1}{2}  g_2(x)1_{[\cB_h(x)=\tilde b_j]}\right\|_{U^{d+1}}\leq 2^{m} \|f_2\|_{U^{d+1}} \leq 2^{m} \eta(|\cB|),
\end{align*}
\noindent for each term where at least one of the $i_j$ is $2$.

Our probability is thus at least

\[\sum_{(i_1,\dots,i_m)\in \{1,3\}^m}\E_{X}\left[\prod_{j=1}^m g_{i_j}(L_j(X))1_{[\cB_h(L_j(X))=\tilde b_j]}\right]-6^m \eta(|\cB|).\]

An important observation is that when $x\in \tilde b_j$, we have $f_1(x),f_1(x+h)\geq \frac{1}{2}+\zeta$, so $g_1(x)=(1-f(x))(1-f(x+h))+f_1(x)+f_1(x+h) - 1 \geq 2\zeta$. Supposing that we can choose $h\in M$ and our factor $\cB$ such that $\cB_h$ is $r'$-regular for $r'$ sufficiently large, we can then finish the argument exactly as in the proof of the Counting Lemma, by choosing $\beta_2,\eta_2,\varepsilon_2,r_2',\nu_2$ depending on $N,d$ appropriately.

The issue that arises is in showing that such a choice of $\cB$ and $h$ exists. For this, our methods do not seem to suffice.

We end this outline by briefly describing how to use Proposition~\ref{prop:extBB} to show that, if $\cB$ and $h$ can be appropriately chosen, then we can find a point $X_0\in (\FF_2^n)^{\ell-1}$ such that $L_1(X_0),\dots,L_m(X_0)$ are in ``good'' atoms of $\cB_h$, as required in the arguments above.

Let $G=\bigoplus_{i=1}^C \frac{1}{2^{k_i+1}}\ZZ/\ZZ$, the image of the map $X\mapsto (P_1(X),\cdots, P_C(X))$. Let $D'$ be the union of the atoms $b$ of $\cB$ that are contained in $D$ and satisfy $\E[|f_3(x+h)|^2 \mid x\in b]\leq \varepsilon'^2$. Since $\|f_3\|_{L^2}\leq \frac{1}{2}\varepsilon'\zeta^{1/2}$, we have $|D'|\geq |D|-\frac{\zeta}{4}2^n > (1-2^{2-c}+\frac{3\zeta}{4})2^n$. We can associate the matroid $D'$ to a subset $S$ of $G$, consisting of the points $g=(g_1,\dots,g_C)$ such that the atom where $P_i=g_i$ for all $i$ is contained in $D'$. By making $\cB$ sufficiently regular, we can ensure that the sizes of atoms are close enough to each other that $\frac{|S|}{|G|}>\frac{|D'|}{2^n}\left(1+\frac{\zeta}{16}\right)^{-1}>1-2^{2-c}+\frac{5}{8}\zeta$.

Let the depths of $\Delta_h P_1,\dots,\Delta_h P_C$ be $k_1',\dots,k_C'$ respectively. Assume for the sake of simplicity that our choice of $h$ satisfies $P_i(h)\in \frac{1}{2^{k_i'+1}}\ZZ/\ZZ$ for $1\leq i\leq C$; it is not difficult to show that this is possible if $h$ can be chosen from the intersection of $M$ with a subspace of sufficiently large codimension. Let $W$ be the subgroup of $G$ isomorphic to $\bigoplus_{i=1}^C \frac{1}{2^{k_i'+1}}\ZZ/\ZZ$, where each term is a subgroup of the corresponding term in $G$. To see how this relates to the problem at hand, consider a fixed tuple $b=(b_1,\dots,b_m)\in G^m$, where $b_j=(b_{j,1},\dots,b_{j,C})$. When $\cB_h$ is sufficiently regular, the set of tuples $b'=(b_1',\dots,b_m')\in G^m$ for which there is a point $X\in (\FF_2^n)^{\ell-1}$ satisfying $P_i(L_j(X))=b_{j,i}$,$P_i(L_j(X)+h)=b_{j,i}'$ for all $i,j$ is exactly those for which $b_j,b_j'$ are in the same coset of $W$ for all $j$.

As previously discussed, the argument from the case of $G=\FF_2^{r(M)}$ using a two-step application of Proposition~\ref{prop:extBB} carries over to this case. The conclusion is that for some cosets $W_1,\dots,W_{c-1}$ of $W$ in $G$ with representatives $h_1,\dots,h_{c-1}$, and some $e_1,\dots,e_{c-1}\in W$, we have $\sum_{j=1}^{c-1} x_j(h_j+e_j)\in S$ for each nonzero $x\in \{0,1\}^{c-1}$, and for $1\leq i\leq c-1$,
\[
\frac{1}{|W|}\sum_{x\in [0,1]^{i-1}}\left|S\cap \left(W_i+\sum_{j=1}^{i-1}x_j W_j\right)\right|\geq 2^{i-1}\frac{|S|}{|G|} > 2^{i-1}\left(1-2^{2-c}+\frac{5}{8}\zeta\right).
\]

Let $b_{\sum_{j=1}^{c-1} x_j 2^{j-1}}=\sum_{j=1}^{c-1} x_j (h_j + e_j)$ for each nonzero $x\in \{0,1\}^{c-1}$. We claim that, for $1\leq i\leq C$, the atoms of $G$ corresponding to $b_1,\dots,b_{2^{c-1}-1}$ are $(d_i,k_i)$-consistent with the system of linear forms $\{L_1,\dots,L_{2^{c-1}-1}\}$ corresponding to $PG(c-2,2)$. Indeed, letting $v_j=h_j+e_j=(v_{j,1},\dots,v_{j,C})$, the function $Q_i(X)=\sum_{j=1}^{c-1} v_{j,i}|x_j|$ is a polynomial of depth at most $k_i$ and degree at most $k_i+1\leq d_i$. Thus, there is a copy of $PG(c-2,2)$ in $D'$ whose elements lie in the atoms $b_1,\dots,b_{2^{c-1}-1}$ of $\cB$.

The next step is to find appropriate atoms $\tilde b_1,\dots,\tilde b_{2^{c-1}-1}$ of $\cB_h$ contained in these atoms of $\cB$. If we fix points $g_1,\dots,g_{2^{c-1}-1}\in W$, where $g_j=(g_{j,1},\dots,g_{j,C})$, then for $j\in [1,2^{c-1}-1]$ we can let $\tilde b_j$ be the atom of $\cB_h$ on which $P_i = b_{j,i}$, $\Delta_h P_i = g_{j,i}+b_{j,i}$ for all $i\in [1,C]$. For $j\in [1,2^{c-1}-1]$, let $S_j$ be the subset of $W$ consisting of the points $g_j$ for which $b_j+g_j\in S$ and the atom $\tilde b_j$ so defined satisfies $\E[|f_3(x)|^2 \mid x\in \tilde b_j],\E[|f_3(x+h)|^2 \mid x\in \tilde b_j]\leq \varepsilon^2$. Again, when $\cB_h$ is sufficiently regular to control the atom sizes well, we can ensure that the number of atoms of $\cB_h$ for which one of the two bounds is exceeded is at most $\frac{\zeta}{2}\frac{1+\frac{\zeta}{16}}{1-\frac{\zeta}{16}}|W|\leq \frac{17}{30}\zeta|W|$. That is,
$|S_j|\geq |(S-b_j)\cap W|-\frac{17}{30}\zeta|W|$. Applying Proposition~\ref{prop:extBB} with $W$ as the group, the trivial group as the subgroup, and sets $S_1,\dots,S_{2^{c-1}-1}$ gives points $p_1,\dots,p_{c-1}\in W$ such that such for $1\leq i\leq c-1$,
\[
    \sum_{x\in\{0,1\}^{i-1}} \left | S_{2^{i-1}+\sum_{j=1}^{i-1} x_j 2^{j-1}}\cap \left \{ p_i + \sum_{j=1}^{i-1} x_j p_j \right \} \right | \geq \sum_{j=2^{i-1}}^{2^i-1}\frac{|S_j|}{|W|}.
\]
The right hand side is
\begin{align*}
    & \sum_{x\in\{0,1\}^{i-1}} \frac{\left | S_{2^{i-1}+\sum_{j=1}^{i-1} x_j 2^{j-1}} \right |}{|W|} \\
    & \geq \sum_{x\in\{0,1\}^{i-1}} \frac{|(S-b_{2^{i-1}+\sum_{j=1}^{i-1} x_j 2^{j-1}})\cap W|-\frac{17}{30}\zeta|W|}{|W|} \\
    & = \sum_{x\in\{0,1\}^{i-1}} \frac{\left | S\cap \left( W_i + \sum_{j=1}^{i-1} x_j W_j \right ) \right | - \frac{17}{30}\zeta|W|}{|W|} \\
    & > 2^{i-1} \left(1-2^{2-c}+\frac{5}{8}\zeta - \frac{17}{30}\zeta|W| \right ) > 2^{i-1}(1-2^{2-c}).
\end{align*}

This is greater than $2^{i-1}-1$. So, by the same argument as before, if for all nonzero $x\in \{0,1\}^{c-1}$ we let $g_{\sum_{j=1}^{c-1} x_j 2^{j-1}} = \sum_{j=1}^{c-1} x_j p_j$, then $g_j\in S_j$ for all $j$. Also as before, for $i\in [1,C]$, $(g_{1,i},\dots,g_{2^{c-1}-1,i})$ is $(d_i',k_i')$-consistent with the system of linear forms $\{L_1,\dots,L_{2^{c-1}-1}\}$. So as long as $r_2'$ is sufficiently large, by Theorem~\ref{thm:equidist}, there is a copy of $PG(c-2,2)$ whose elements are contained in the atoms $\tilde b_1,\dots, \tilde b_{2^{c-1}-1}$ of $\cB_h$, respectively, as desired. So indeed, we are only left with the problem of showing that $\cB$ and $h$ can be chosen so that $\cB_h$ is sufficiently regular.

\section{Future Steps}
The Counting Lemma has potential for giving simple proofs to other extremal results on matroids whose graph theory analogues are proven using the Szemerédi regularity lemma and its associated counting lemma, including giving short new proofs for known results (as for Theorem~\ref{thm:gES}). A search through more such results in extremal graph theory which yield matroid analogues may well prove fruitful.

In terms of the critical threshold problem, the methods used in our approach to the $N_{\ell,c,1}$ case seem to offer a new approach to the problem, that of using a strong form of regularity and attempting to analyze constructions like $M\cap (M+h)$ through counting lemma-type arguments. The same types of arguments seem equally suited for some special subcases of the $i=4$ case of Conjecture~\ref{conj:main}, and could perhaps be adapted for more general cases as well, though some method of dealing with the regularity of constructions like $\cB_h$ will still likely be needed to proceed. Finally, Proposition~\ref{prop:extBB}, as a simple and general result that makes the proof of the Bose-Burton theorem completely transparent, may be applicable to computing critical thresholds independently of the more sophisticated machinery we have developed, among other potential uses.

\section*{Acknowledgements}
This research was conducted at the University of Minnesota Duluth REU and was supported by NSF grant 1358659 and NSA grant H98230-16-1-0026. The author thanks Joe Gallian for suggesting the problem and for helpful comments on the manuscript, and the referees for their careful reading of the manuscript and
for their useful feedback.

\bibliographystyle{acm}

\bibliography{main}

\begin{thebibliography}{10}

\bibitem{doublingKn}
{\sc Allen, P.}
\newblock Dense $ {H} $-free graphs are almost $(\chi ({H})-1) $-partite.
\newblock {\em Electron. J. Combin. 17}, 1 (2010), R21.

\bibitem{chromThresh}
{\sc Allen, P., B{\"o}ttcher, J., Griffiths, S., Kohayakawa, Y., and Morris,
  R.}
\newblock The chromatic thresholds of graphs.
\newblock {\em Adv. Math. 235\/} (2013), 261--295.

\bibitem{VeryCountingMaybe}
{\sc Bhattacharyya, A., Fischer, E., Hatami, H., Hatami, P., and Lovett, S.}
\newblock Every locally characterized affine-invariant property is testable.
\newblock In {\em Proceedings of the forty-fifth annual ACM symposium on Theory
  of computing\/} (2013), ACM, pp.~429--436.

\bibitem{boninqin}
{\sc Bonin, J.~E., and Qin, H.}
\newblock Size functions of subgeometry-closed classes of representable
  combinatorial geometries.
\newblock {\em Discrete Math. 224}, 1-3 (2000), 37--60.

\bibitem{CrapoRota}
{\sc Crapo, H.~H., and Rota, G.-C.}
\newblock {\em On the foundations of combinatorial theory: {C}ombinatorial
  geometries}, preliminary~ed.
\newblock The M.I.T. Press, Cambridge, Mass.-London, 1970.

\bibitem{gES}
{\sc Geelen, J., and Nelson, P.}
\newblock An analogue of the {E}rd{\H{o}}s-{S}tone theorem for finite
  geometries.
\newblock {\em Combinatorica 35}, 2 (2015), 209--214.

\bibitem{geelenOdd}
{\sc Geelen, J., and Nelson, P.}
\newblock Odd circuits in dense binary matroids.
\newblock {\em Combinatorica\/} (2015), 1--7.

\bibitem{main}
{\sc Geelen, J., and Nelson, P.}
\newblock The critical number of dense triangle-free binary matroids.
\newblock {\em J. Combin. Theory Ser. B 116\/} (2016), 238--249.

\bibitem{KnFree}
{\sc Goddard, W., and Lyle, J.}
\newblock Dense graphs with small clique number.
\newblock {\em J. Graph Theory 66}, 4 (2011), 319--331.

\bibitem{gReg}
{\sc Green, B.}
\newblock A {S}zemer{\'e}di-type regularity lemma in abelian groups, with
  applications.
\newblock {\em Geom. Funct. Anal. 15}, 2 (2005), 340--376.

\bibitem{hatamiRegCount}
{\sc Hatami, H., Hatami, P., and Lovett, S.}
\newblock General systems of linear forms: equidistribution and true
  complexity.
\newblock {\em Adv. Math. 292\/} (2016), 446--477.

\bibitem{hyperRemoval}
{\sc Kr{\'a}l', D., Serra, O., and Vena, L.}
\newblock A removal lemma for systems of linear equations over finite fields.
\newblock {\em Israel J. Math. 187}, 1 (2012), 193--207.

\bibitem{Oxley}
{\sc Oxley, J.}
\newblock {\em Matroid theory}, second~ed., vol.~21 of {\em Oxford Graduate
  Texts in Mathematics}.
\newblock Oxford University Press, Oxford, 2011.

\bibitem{powerCW}
{\sc Schanuel, S.~H.}
\newblock An extension of {C}hevalley's theorem to congruences modulo prime
  powers.
\newblock {\em J. Number Theory 6}, 4 (1974), 284--290.

\bibitem{MatroidRemoval}
{\sc Shapira, A.}
\newblock A proof of {G}reen's conjecture regarding the removal properties of
  sets of linear equations.
\newblock {\em J. Lond. Math. Soc.\/} (2010), jdp076.

\bibitem{tao2012inverse}
{\sc Tao, T., and Ziegler, T.}
\newblock The inverse conjecture for the {G}owers norm over finite fields in
  low characteristic.
\newblock {\em Ann. Comb. 16}, 1 (2012), 121--188.

\bibitem{tidor}
{\sc Tidor, J.}
\newblock Dense binary $ {P}{G} (t-1, 2) $-free matroids have critical number $
  t-1$ or $ t$.
\newblock {\em arXiv:1508.07278\/} (2015).

\end{thebibliography}

\end{document}